\documentclass[reqno]{amsart}

\usepackage{tikz-cd}
\usepackage{tikz}

\usepackage{fullpage,dsfont}
\usepackage{leftindex}
\usepackage{tabularx}
\usepackage{mathtools,leftindex,tensor}
\usepackage{mathabx}   
\usepackage{graphicx}  


\usepackage{hyperref} 

\usepackage{parskip}
\makeatletter 
\def\thm@space@setup{%
 \thm@preskip=\parskip \thm@postskip=0pt
}
\def\th@remark{%
  \thm@headfont{\itshape}%
  \normalfont 
  \thm@preskip\parskip \thm@postskip=0pt
}
\makeatother

\usepackage[nobysame,alphabetic,initials,msc-links]{amsrefs}

\usepackage[final]{pdfpages}

\usepackage{multirow}

\usepackage{stmaryrd}

\usepackage{array}

\DefineSimpleKey{bib}{how}
\DefineSimpleKey{bib}{mrclass}
\DefineSimpleKey{bib}{mrnumber}
\DefineSimpleKey{bib}{fjournal}
\DefineSimpleKey{bib}{mrreviewer}

\renewcommand{\PrintDOI}[1]{%
  \href{http://dx.doi.org/#1}{{\tt DOI:#1}}%
}
\renewcommand{\eprint}[1]{#1}
\BibSpec{book}{%
    +{}  {\PrintPrimary}                {transition}
    +{.} { \PrintDate}                  {date}
    +{.} { \textit}                     {title}
    +{.} { }                            {part}
    +{:} { \textit}                     {subtitle}
    +{,} { \PrintEdition}               {edition}
    +{}  { \PrintEditorsB}              {editor}
    +{,} { \PrintTranslatorsC}          {translator}
    +{,} { \PrintContributions}         {contribution}
    +{,} { }                            {series}
    +{,} { \voltext}                    {volume}
    +{,} { }                            {publisher}
    +{,} { }                            {organization}
    +{,} { }                            {address}
    +{,} { }                            {status}
    +{,} { \PrintDOI}                   {doi}
    +{,} { \PrintISBNs}                 {isbn}
    +{}  { \parenthesize}               {language}
    +{}  { \PrintTranslation}           {translation}
    +{;} { \PrintReprint}               {reprint}
    +{.} { }                            {note}
    +{.} {}                             {transition}
    +{}  {\SentenceSpace \PrintReviews} {review}
}
\BibSpec{article}{%
    +{}  {\PrintAuthors}                {author}
    +{,} { \textit}                     {title}
    +{.} { }                            {part}
    +{:} { \textit}                     {subtitle}
    +{,} { \PrintContributions}         {contribution}
    +{.} { \PrintPartials}              {partial}
    +{,} { }                            {journal}
    +{}  { \textbf}                     {volume}
    +{}  { \PrintDatePV}                {date}
    +{,} { \issuetext}                  {number}
    +{,} { \eprintpages}                {pages}
    +{,} { }                            {status}
    +{,} { \PrintDOI}                   {doi}
    +{,} { \eprint}        {eprint}
    +{}  { \parenthesize}               {language}
    +{}  { \PrintTranslation}           {translation}
    +{;} { \PrintReprint}               {reprint}
    +{.} { }                            {note}
    +{.} {}                             {transition}
    +{}  {\SentenceSpace \PrintReviews} {review}
}
\BibSpec{collection.article}{%
    +{}  {\PrintAuthors}                {author}
    +{,} { \textit}                     {title}
    +{.} { }                            {part}
    +{:} { \textit}                     {subtitle}
    +{,} { \PrintContributions}         {contribution}
    +{,} { \PrintConference}            {conference}
    +{}  {\PrintBook}                   {book}
    +{,} { }                            {booktitle}
    +{,} { \PrintDateB}                 {date}
    +{,} { pp.~}                        {pages}
    +{,} { }                            {publisher}
    +{,} { }                            {organization}
    +{,} { }                            {address}
    +{,} { }                            {status}
    +{,} { \PrintDOI}                   {doi}
    +{,} { \eprint}        {eprint}
    +{}  { \parenthesize}               {language}
    +{}  { \PrintTranslation}           {translation}
    +{;} { \PrintReprint}               {reprint}
    +{.} { }                            {note}
    +{.} {}                             {transition}
    +{}  {\SentenceSpace \PrintReviews} {review}
}
\BibSpec{misc}{%
  +{}{\PrintAuthors}  {author}
  +{,}{ \textit}      {title}
  +{.}{ }             {how}
  +{}{ \parenthesize} {date}
  +{,} { available at \eprint}        {eprint}
  +{,}{ available at \url}{url}
  +{,}{ }             {note}
  +{.}{}              {transition}
}
\usepackage{amssymb, amsfonts, amsxtra, amsmath}
\usepackage{mathrsfs}
\usepackage{mathdots}
\usepackage{wasysym}
\usepackage[all]{xy}
\usepackage{bbm}
\usepackage{calc}
\usepackage{accents}

\usepackage{stackengine} 

\numberwithin{equation}{section}

\DeclareSymbolFontAlphabet{\mathbb}{AMSb}	

\newtheorem{Theorem}{Theorem}[section]
\newtheorem*{Theorem*}{Theorem}
\newtheorem{Def}[Theorem]{Definition}
\newtheorem*{Def*}{Def}
\newtheorem{Lem}[Theorem]{Lemma}
\newtheorem{Prop}[Theorem]{Proposition}
\newtheorem{Cor}[Theorem]{Corollary}

\newtheorem{Rem}[Theorem]{Remark}

\newtheorem{Exa}[Theorem]{Example}

\mathchardef\mhyph="2D

\DeclareMathOperator{\End}{\mathrm{End}}
\DeclareMathOperator{\Fun}{\mathrm{Fun}}
\DeclareMathOperator{\Hilb}{\mathrm{Hilb}}

\DeclareMathOperator{\id}{\mathrm{id}}
\DeclareMathOperator{\Rep}{\mathrm{Rep}}

\newcommand{\mcG}{\mathcal{G}}
\newcommand{\mcH}{\mathcal{H}}
\newcommand{\mcK}{\mathcal{K}}
\newcommand{\mcL}{\mathcal{L}}
\newcommand{\mK}{\mathcal{K}}
\newcommand{\C}{\mathbb{C}}
\newcommand{\G}{\mathbb{G}}
\newcommand{\mG}{\mathcal{G}}
\newcommand{\mH}{\mathcal{H}}

\newcommand{\Uu}{\mathds{U}}
\newcommand{\uU}{\text{\reflectbox{$\Uu$}}\:\!}
\newcommand{\Ww}{\mathds{W}}
\newcommand{\wW}{\text{\reflectbox{$\Ww$}}\:\!}
\newcommand{\WW}{{\mathds{V}\!\!\text{\reflectbox{$\mathds{V}$}}}}
\newcommand{\Vv}{\mathds{V}}
\newcommand{\vV}{\text{\reflectbox{$\Vv$}}\:\!}
\newcommand{\VV}[1][1]{%
  \tikz[scale=0.115, line width=0.4pt, baseline={([yshift=-0.7ex]current bounding box.center)}]{
    \draw (-0.1,0) -- (1,0);
    \draw (1.3,0) -- (2.4,0);
    \draw[line width=0.9pt] (0.2,0) -- (1.15,-2);
    \draw[line width=0.9pt] (2.1,0) -- (1.15,-2);
    \draw (0.7,0) -- (1.15,-1.3);
    \draw (1.6,0) -- (1.15,-1.3);
  }%
}

\newcommand{\Nat}{\operatorname{Nat}}
\newcommand{\opp}{\mathrm{op}}
\newcommand{\Corr}{\mathrm{Corr}}
\newcommand{\ovot}{\bar{\otimes}}

\allowdisplaybreaks

\begin{document}

\title{Equivariant Eilenberg-Watts theorems for locally compact quantum groups}
\author{Joeri De Ro}
\address{Institute of Mathematics of the Polish Academy of Sciences}
\email{jdero@impan.pl}

\begin{abstract} Given two von Neumann algebras $A$ and $B$, the $W^*$-algebraic Eilenberg-Watts theorem, due to M.\ Rieffel, asserts that there is a canonical equivalence $\Corr(A,B)\simeq \Fun(\Rep(B), \Rep(A))$ of categories, where $\Corr(A,B)$ denotes the category of all $A$-$B$-correspondences, $\Rep(A)$ is the category of all unital normal $*$-representations of $A$ on Hilbert spaces and $\Fun(\Rep(B), \Rep(A))$ denotes the category of all normal $*$-functors $\Rep(B)\to \Rep(A)$. In this paper, we upgrade the von Neumann algebras $A$ and $B$ with actions $A\curvearrowleft \G$ and $B\curvearrowleft \G$ of a locally compact quantum group $\G$, and we provide several equivariant versions of the $W^*$-algebraic Eilenberg-Watts theorem using the language of module categories. We also prove that for a locally compact quantum group $\G$ with Drinfeld double $D(\G)$, the category of unitary $D(\G)$-representations is isomorphic to the Drinfeld center of $\Rep(\G)$, generalizing a result by Neshveyev-Yamashita from the compact to the locally compact setting.
\end{abstract}

\maketitle

\section{Introduction}

Given von Neumann algebras $A$ and $B$, let us write $\Corr(A,B)$ for the $W^*$-category of $A$-$B$-correspondences (sometimes also called $A$-$B$-bimodules), that is Hilbert spaces $\mathcal{H}$ endowed with commuting unital, normal $*$-representations of $A$ and $B^{\operatorname{op}}$ \cite{Con80}. We also write $\Rep(A)= \Corr(A, \C)$ for the $W^*$-category of unital, normal $*$-representations of $A$ on Hilbert spaces. It follows from the seminal work \cite{Rie74} (see also \cites{Sau83, Bro03}), that there is a canonical equivalence 
\begin{equation}\label{Rieffel}
    \Corr(A,B) \simeq \Fun(\Rep(B), \Rep(A)): \mathcal{G}\mapsto (F_\mathcal{G}: \mathcal{H}\mapsto \mathcal{G}\boxtimes_B \mathcal{H})
\end{equation}
of categories (Theorem \ref{nonequivariant}), where $\Fun(\Rep(B), \Rep(A))$ denotes the category of normal $*$-functors $\Rep(B)\to \Rep(A)$. This is a $W^*$-algebraic version of the celebrated Eilenberg-Watts theorem \cites{Ei60, Wa60}.

In \cite{DCDR24}, the notion of an $A$-$B$-correspondence was generalized to the equivariant setting, where the von Neumann algebras $A$ and $B$ are upgraded with actions of a locally compact quantum group $\G$. This leads to the $W^*$-category $\Corr^\G(A,B)$ of $\G$-$A$-$B$-correspondences, whose objects consist of $A$-$B$-correspondences $\mathcal{H}$ that are upgraded with a unitary $\G$-representation compatible with the relevant structures. It is then natural to ask if the equivalence \eqref{Rieffel} has appropriate analogons in the equivariant setting, where the left hand side is replaced by $\Corr^\G(A,B)$.

This was first investigated in \cite{DR25a}, where it was argued that it is necessary to take into account the extra module-categorical structure that the equivariant setting offers.
More precisely, it was observed in \cite{DR25a} that every $\G$-$W^*$-algebra $A\curvearrowleft \G$ induces a natural action $\Corr^\G(A,\C)=: \Rep^\G(A)\curvearrowleft \Rep(\G)$, turning $\Rep^\G(A)$ in a right $\Rep(\G)$-$W^*$-module category. This then leads to the canonical functor 
\begin{equation}\label{DR}
    \Corr^\G(A,B)\to \Fun_{\Rep(\G)}(\Rep^\G(B), \Rep^\G(A)): \mathcal{G}\mapsto (F_\mG: \mcH\mapsto \mG\boxtimes_B \mcH),
\end{equation}
where $\Fun_{\Rep(\G)}(\Rep^\G(B), \Rep^\G(A))$ is the category of normal $\Rep(\G)$-module $*$-functors $\Rep^\G(B)\to \Rep^\G(A)$. It was shown in \cite{DR25a}*{Theorem 4.5} that \eqref{DR} is an equivalence of categories when $\G$ is a \emph{compact} quantum group, but it was left open if \eqref{DR} also defines an equivalence of categories for a general locally compact quantum group. In this paper, we settle this question in the affirmative. Reasoning along the same lines, we will also show that $\Rep(A\rtimes \G)\curvearrowleft \Rep(\G)$ in a natural way, and that there is a natural equivalence
$$\Corr^\G(A,B)\simeq \Fun_{\Rep(\G)}(\Rep(B\rtimes \G), \Rep(A\rtimes \G)).$$
These are all $\Rep(\G)$-module versions of the equivariant Eilenberg-Watts theorem (see Theorem \ref{main1}).

Given a $\G$-$W^*$-algebra $\alpha: A\curvearrowleft \G$,
a universal version $\alpha^u: A \to A \ovot C_0^u(\G)^{**}$ of the coaction $\alpha: A \to A \ovot L^\infty(\G)$ was constructed in the recent paper \cite{DCK24}. We use this construction to prove that there is a natural action $\Rep(A)\curvearrowleft \Rep(\hat{\G})$, turning $\Rep(A)$ into a right $\Rep(\hat{\G})$-$W^*$-module category. We then prove that there is a natural equivalence
$$\Corr^\G(A,B)\simeq \Fun_{\Rep(\hat{\G})}(\Rep(B), \Rep(A)),$$ providing also a $\Rep(\hat{\G})$-module version of the equivariant Eilenberg-Watts theorem (see Theorem \ref{main2}). The analogon of this fact in the purely algebraic setting was first established in the setting of finite-dimensional Hopf algebras in \cite{AM07}*{Proposition 1.23} and later for general bialgebras in \cite{NSS25}*{Appendix A}. Moreover, it is worth noting that taking $A=B=\C$ endowed with the trivial $\G$-actions leads to the equivalence
$$\Rep(\G)\simeq\End_{\Rep(\hat{\G})}(\Hilb),$$
where the $W^*$-category $\Hilb$ of Hilbert spaces is acted upon by $\Rep(\hat{\G})$ through the forgetful functor $\Rep(\hat{\G})\to \Hilb$ (see also\ \cite{EGNO15}*{Example 7.12.26} for the analogon in the setting of finite-dimensional Hopf algebras).

Let us note that the construction of categorical structures arising from (topological) quantum group actions has attracted considerable attention in recent years (see e.g.\ \cites{PR08, DCY13, Nes14, NY14, HY25}), with most results formulated in the $C^*$-algebraic framework for compact and/or discrete quantum groups. The present paper contributes to this line of research by developing analogous constructions in the $W^*$-algebraic framework, for general locally compact quantum groups.

Given a tensor category $\mathscr{C}$, we denote its Drinfeld center by $\mathcal{Z}(\mathscr{C})$ (see \cite{EGNO15}*{Subsection 7.13} or \cite{Kas95}*{XIII.4}). If $H$ is a finite-dimensional Hopf algebra, consider the tensor category $\prescript{}{H}{\operatorname{Mod}}$ of left $H$-modules. It is well-known (see e.g.\ \cite{Kas95}*{Theorem XIII.5.1}) that
$\mathcal{Z}(\prescript{}{H}{\operatorname{Mod}})\cong \prescript{}{D(H)}{\operatorname{Mod}},$
where $D(H)$ is the Drinfeld double of $H$. An analogon of this fact was proven in the setting of compact quantum groups in \cite{NY18}*{Section 3}, where the authors prove that 
$\mathcal{Z}(\Rep(\G)) \cong \Rep(D(\G))$
for $\G$ a compact quantum group with Drinfeld double $D(\G)$ (note that the Drinfeld center is defined using \emph{unitary} half-braidings in this setting). In the last section of this paper, we generalize this result to the setting of locally compact quantum groups (see Theorem \ref{main3} and Proposition \ref{mainbraiding}). 

\section{Preliminaries}

All vector spaces in this paper are defined over the field of complex numbers. We assume that inner products of Hilbert spaces are anti-linear in the first variable. Given a subset $S$ of a normed linear space $V$, we write $[S]$ for the norm-closure of the linear span of $S$. More generally, if $(V, \tau)$ is a topological vector space and $S\subseteq V$, we write $[S]^\tau$ for the $\tau$-closure of $S$ inside $V$. Given $C^*$-algebras $C,D$, their minimal tensor product is denoted by $C\otimes D$. The multiplier $C^*$-algebra of $C$ is denoted by $M(C)$. Given von Neumann algebras $A$ and $B$, their von Neumann algebra tensor product is denoted by $A \ovot B$.  Given a von Neumann algebra $A$, we consider the standard Hilbert space $L^2(A)$ with modular conjugation $J_A$. It carries a unital, normal, faithful $*$-representation $\pi_A: A\to B(L^2(A))$ and a unital, normal, faithful anti-$*$-representation $\rho_A: A\to B(L^2(A)): x \mapsto J_A \pi_A(x)^* J_A$. We have $\rho_A(A) = \pi_A(A)'$. Given Hilbert spaces $\mcH, \mcK$, we write $\Sigma= \Sigma_{\mcH, \mcK}: \mcH\otimes \mcK\to \mcK\otimes \mcH$ for the switch map. The space of compact operators on $\mcH$ is denoted by $\mathcal{K}(\mcH)$.

\subsection{W*-categories}

For the standard definition of a $C^*$-category, we refer the reader to \cite{GLR85}*{Definition 1.1}. 

A linear functor $F: \mathscr{C}\to \mathscr{D}$ between $C^*$-categories is called \emph{$*$-functor} if $F(f^*) = F(f)^*$ for all $X,Y \in \mathscr{C}$ and all $f\in \mathscr{C}(X,Y)$.

A \emph{$W^*$-category} $\mathscr{C}$ is a $C^*$-category  such that for every two objects $X,Y\in \mathscr{C}$, the Banach space $\mathscr{C}(X,Y)$ has a predual. In particular, the morphism spaces $\mathscr{C}(X)=\mathscr{C}(X,X)$ are $W^*$-algebras.  A $*$-functor $F: \mathscr{C}\to \mathscr{D}$ between $W^*$-categories is called \emph{normal} if for every object $X\in \mathscr{C}$, the unital $*$-homomorphism
$$\mathscr{C}(X)\to \mathscr{D}(F(X)): f \mapsto F(f)$$
is normal. We write $\Fun(\mathscr{C}, \mathscr{D})$ for the class of all normal $*$-functors $\mathscr{C}\to \mathscr{D}$. Two $W^*$-categories $\mathscr{C}, \mathscr{D}$ are called \emph{equivalent} if there exist normal $*$-functors $F: \mathscr{C}\to \mathscr{D}$ and $G: \mathscr{D}\to \mathscr{C}$ together with natural unitary isomorphisms $F\circ G \cong \id_{\mathscr{D}}$ and $G\circ F \cong \id_{\mathscr{C}}.$

Given a $W^*$-category $\mathscr{C}$, a \emph{direct sum} of a set of objects $\{C_i\}_{i\in I}\subseteq \mathscr{C}$ is an object $C=\bigoplus_{i\in I} C_i\in \mathscr{C}$ together with isometries $\lambda_i: C_i\hookrightarrow C$ such that $\sum_{i\in I} \lambda_i \lambda_i^*= 1_C$ (the convergence of the sum is to be understood in the $\sigma$-weak topology of $\mathscr{C}(C)$). Assume that $\mathscr{C}, \mathscr{D}$ are $W^*$-categories and that $F: \mathscr{C}\to \mathscr{D}$ is a normal $*$-functor. If $(C, \{\lambda_i: C_i\to C\}_{i\in I})$ is a direct sum of $\{C_i\}_{i\in I}$, then $(F(C), \{F(\lambda_i): F(C_i)\to F(C)\}_{i\in I})$ is a direct sum of $\{F(C_i)\}_{i\in I}$. 

Let $\mathscr{C}$ be a $W^*$-category admitting (arbitrary) direct sums. We call $C\in \mathscr{C}$ a \emph{generator}, if for every object $D\in \mathscr{C}$, there exists an index set $I$ and an isometry $D \hookrightarrow \bigoplus_{i\in I} C$. 

For much more information about $W^*$-categories, the reader is referred to \cite{GLR85}. 

We now discuss an important example of a $W^*$-category. Given a von Neumann algebra $A$, we write $\Rep(A)$ for the $W^*$-category of unital, normal $*$-representations of $A$ on Hilbert spaces. Its objects consist of pairs $(\mcH, \pi_\mcH)$ where $\mcH$ is a Hilbert space and $\pi_\mcH: A \to B(\mcH)$ is a unital, normal $*$-homomorphism. Often, we will simply write $\mcH \in \Rep(A)$, leaving the representation $\pi_\mcH: A \curvearrowright \mcH$ implicit. Given $\mcH, \mcH'\in \Rep(A)$, the associated morphism space is
$${}_A\mathscr{L}(\mcH, \mcH'):= \{x\in B(\mcH, \mcH')\mid \forall a \in A:  x\pi_\mcH(a)= \pi_{\mcH'}(a)x\}.$$

Let us recall some results about this $W^*$-category that will be relevant to us. An object $(\mcH, \pi)\in \Rep(A)$ is a generator if and only if $\pi$ is faithful \cite{Rie74}*{Proposition 1.3}. If $B$ is another von Neumann algebra, $\mathcal{H}\in \Rep(B)$ is a generator and $F,G \in \Fun(\Rep(B),\Rep(A))$, then the map
\begin{equation}\label{Rieffelbijection}
    \Nat(F,G) \to \{x\in {}_A\mathscr{L}(F(\mcH), G(\mcH))\mid \forall y \in {}_B\mathscr{L}(\mcH): xF(y) = G(y)x\}: (\eta_\mcK)_{\mcK\in \Rep(B)} \mapsto \eta_\mcH
\end{equation}
is a well-defined bijection \cite{Rie74}*{Proposition 5.4}, where $\Nat(F,G)$ denotes the set of natural transformations $F \implies G$. 

If $\mcH= (\mcH, \pi_\mcH)\in \Rep(B)$ and if $\mcK$ is an arbitrary Hilbert space, then $(\mcH\otimes \mcK, b\mapsto \pi_\mcH(b)\otimes 1)\in \Rep(B)$. Given a normal $*$-functor $F: \Rep(B)\to \Rep(A)$, there then is a canonical $A$-linear isomorphism
\begin{equation}\label{canonicalisomorphismmultiplicity}C_{\mcH, \mcK}^F:  F(\mcH) \otimes \mcK\to F(\mcH\otimes \mcK).\end{equation}
More concretely, fix an orthonormal basis $\{e_i\}_{i\in I}$ for $\mcK$, and consider for every $i\in I$ the induced $B$-linear isometry
$$\iota_i: \mcH \to  \mcH\otimes \mcK: \xi \mapsto  \xi\otimes e_i.$$
Then 
$$C_{\mcH, \mcK}^F(\xi\otimes e_i)= F(\iota_i)\xi, \quad \xi \in F(\mcH), \quad i\in I$$  defines an $A$-linear unitary, and it does not depend on the choice of orthonormal basis $\{e_i\}_{i\in I}$ for $\mcK$. Given $F,G \in \Fun(\Rep(B), \Rep(A))$ and $\eta\in \Nat(F,G)$, it is then easily verified that the diagram
\begin{equation}\label{canmultiiso}
\begin{tikzcd}
F(\mcH) \otimes \mcK \arrow[rr, "{C_{\mcH, \mcK}^F}"] \arrow[d, "\eta_{\mcH}\otimes 1"'] &  & F(\mcH\otimes \mcK)  \arrow[d, "\eta_{\mcH\otimes \mcK}"] \\
G(\mcH) \otimes \mcK \arrow[rr, "{C_{\mcH,\mcK}^G}"]                                  &  & G(\mcH\otimes \mcK)                                  
\end{tikzcd}
\end{equation}
commutes.

\subsection{Locally compact quantum groups} In this subsection, we recall some notations and conventions from the theory of locally compact quantum groups \cites{KV00,KV03,VV03}.

A \emph{von Neumann bialgebra} is a pair $(M, \Delta)$ where $M$ is a von Neumann algebra and $\Delta: M \to M \ovot M$ is a unital, normal, isometric $*$-homomorphism such that $(\Delta \otimes \id)\circ \Delta = (\id \otimes \Delta)\circ \Delta$.

A \emph{locally compact quantum group} $\G$ is a von Neumann bialgebra $(L^\infty(\G), \Delta)$ for which there exist normal, semifinite, faithful weights $\varphi, \psi: L^\infty(\G)_+\to [0, \infty]$ such that $(\id\otimes \varphi)\Delta(x)= \varphi(x)1$ for all $x\in \mathscr{M}_\varphi^+$ and $(\psi\otimes \id)\Delta(x)= \psi(x)1$ for all $x\in \mathscr{M}_\psi^+$. These are called the \emph{left Haar weight} and the \emph{right Haar weight}, and they can be shown to be unique up to a non-zero positive scalar multiple. We denote the standard predual of the von Neumann algebra $L^\infty(\G)$ by $L^1(\G)$, so that $L^1(\G)^*\cong L^\infty(\G)$.

We denote the GNS-Hilbert space of $\varphi$ by $L^2(\G)$. One can then canonically identify the GNS-Hilbert space of $\psi$ with $L^2(\G)$. Using the GNS-maps
$$\Lambda_\varphi: \mathscr{N}_\varphi\to L^2(\G), \quad \Lambda_\psi: \mathscr{N}_\psi\to L^2(\G),$$
we can then define the unitaries 
\[
V\in B(L^2(\G))\ovot L^ \infty(\G), \quad W \in L^\infty(\G)\ovot B(L^2(\G)),
\]
called respectively \emph{right} and \emph{left} regular unitary representation. They are uniquely determined by
\begin{align*}
(\id\otimes \omega)(V) \Lambda_{\psi}(x) &= \Lambda_{\psi}((\id\otimes \omega)\Delta(x)),
\qquad \omega \in L^1(\G),\quad x\in \mathscr{N}_{\psi},\\
(\omega \otimes \id)(W^*)\Lambda_{\phi}(x) &= \Lambda_{\phi}((\omega\otimes \id)\Delta(x)),\qquad \omega \in L^1(\G),\quad x\in \mathscr{N}_{\phi}.
\end{align*} They are \emph{multiplicative unitaries} \cite{BS93} meaning that
\[
V_{12}V_{13}V_{23} = V_{23}V_{12},\qquad W_{12}W_{13}W_{23}= W_{23}W_{12},
\]
and they implement the coproduct of $L^\infty(\G)$ in the sense that
$$\label{EqComultImpl}
W^*(1\otimes x)W = \Delta(x) = V(x\otimes 1)V^*,\qquad x\in L^\infty(\G).$$
Moreover, we have
$$C_0^r(\G):=[(\omega\otimes \id)(V) \mid \omega \in B(L^2(\G))_*] = [(\id\otimes \omega)(W) \mid \omega \in B(L^2(\G))_*],$$
which is a $\sigma$-weakly dense C$^*$-subalgebra of $L^\infty(\G)$. Then $\Delta(C_0^r(\G))\subseteq M(C_0^r(\G)\otimes C_0^r(\G))$. 

We also define the von Neumann algebra
\begin{align*}
    L^\infty(\hat{\G}) := [(\omega\otimes \id)(W) \mid \omega \in L^1(\G)]^{\sigma\textrm{-weak}}
\end{align*}
together with the coproduct
\begin{align*}
    \hat{\Delta}: L^\infty(\hat{\G})\to L^\infty(\hat{\G})\ovot L^\infty(\hat{\G}): \hat{x}\mapsto \Sigma W(\hat{x}\otimes 1)W^*\Sigma.
\end{align*}
The pair $(L^\infty(\hat{\G}), \hat{\Delta})$ then defines the dual locally compact quantum group $\hat{\G}$. The left invariant weight $\hat{\varphi}$ is constructed in a way that ensures the canonical identification $L^2(\G)= L^2(\hat{\G})$. It will also be useful to consider the locally compact quantum group $\check{\G}:=\hat{\G}'$, so that
$$L^\infty(\hat{\G})'=L^\infty(\check{\G}) = [(\id\otimes \omega)(V) \mid \omega \in L^1(\G)]^{\sigma\textrm{-weak}}
 , \quad \check{\Delta}: L^\infty(\check{\G})\to L^\infty(\check{\G})\ovot L^\infty(\check{\G}): \check{x}\mapsto V^*(1\otimes \check{x})V.$$
We have $$\check{W}=V \in L^\infty(\check{\G})\ovot L^\infty(\G), \quad W \in L^\infty(\G)\ovot L^\infty(\hat{\G}), \quad \check{V}\in L^\infty(\G)'\ovot L^\infty(\check{\G}), \quad \hat{W}= W_{21}^*\in L^\infty(\hat{\G})\ovot L^\infty(\G).$$

We write $J$ for the modular conjugation on $L^2(\G)$ associated to the weight $\varphi$ and we write $\hat{J}$ for the modular conjugation on $L^2(\G)$ associated with the weight $\hat{\varphi}$ (this also coincides with the modular conjugation $\check{J}$ on $L^2(\G)$ in the canonical GNS-construction associated with the left invariant weight $\check{\varphi}$ on $L^\infty(\check{\G})$ given by $\check{\varphi}(\check{x})= \hat{\varphi}(\hat{J}\check{x}\hat{J})$ for $\check{x}\in L^\infty(\check{\G})$). We will write $\rho_\G(x):= Jx^*J$ for $x\in L^\infty(\G)$. We have 
$\hat{J}L^\infty(\G)\hat{J}=L^\infty(\G),$
so we obtain the anti-$*$-homomorphism
$$R: L^\infty(\G)\to L^\infty(\G): x \mapsto \hat{J}x^*\hat{J}.$$
We call $R$ the \emph{unitary antipode} of $\G$. There is a canonical unimodular complex number $c\in \mathbb{C}$ (cfr.\ \cite{KV03}*{Corollary 2.12}) satisfying 
$$c \hat{J}J=\overline{c}J \hat{J}.$$
We write $u_\G:= c \hat{J} J$ for the associated self-adjoint unitary. The following identities will be useful:
\begin{align*}
    (\hat{J}\otimes J)W(\hat{J}\otimes J)=W^*, \quad (J\otimes \hat{J})V(J\otimes \hat{J})=V^*, \quad (u_\G\otimes 1)V(u_\G\otimes 1)=W_{21}.
\end{align*} Pontryagin biduality holds: $\hat{\hat{\G}}= \G$ and $\check{\check{\G}}\cong \G$ via the isomorphism $x\mapsto u_\G x u_\G$. 

\subsection{Representation theory and universal quantum groups}

A unitary (right) $\G$-representation consists of a pair $(\mcH, U_\mcH)$ where $\mcH$ is a Hilbert space and $U_\mcH\in B(\mcH)\ovot L^\infty(\G)$ is a unitary satisfying $(\id \otimes \Delta)(U_\mcH)= U_{\mcH,12}U_{\mcH,13}$. To simplify notation, we  will often simply write $\mcH \in \Rep(\G)$ or $U_\mcH \in \Rep(\G)$. In fact, we automatically have $U_\mcH \in M(\mathcal{K}(\mcH)\otimes C_0^r(\G))$. Note that $(L^2(\G),V), (L^2(\G), W_{21})\in \Rep(\G)$. When we write $L^2(\G)\in \Rep(\G)$, we will always assume that it is endowed with the right regular representation $V$, unless explicitly mentioned otherwise. If $\mcH$ is any Hilbert space, then $\mathbb{I}= \mathbb{I}_\mcH:= 1\otimes 1 \in B(\mcH)\ovot L^\infty(\G)$ is called the \emph{trivial} $\G$-representation on $\mcH$. If $(\mcH, U_\mcH)$ and $(\mcK, U_\mcK)$ are unitary $\G$-representations, then so are $(\mcH, U_\mcH)\otop (\mcK, U_\mcK):= (\mcH\otimes \mcK, U_{\mcH, 13}U_{\mcK, 23})$ and $(\mcH, U_\mcH)\obot (\mcK, U_\mcK):=(\mcH\otimes \mcK, U_{\mcK,23}U_{\mcH,13})$. Given $\mcH, \mcK\in \Rep(\G)$, we write 
$$\mathscr{L}^\G(\mcH, \mcK)= \mathscr{L}^\G(U_\mcH, U_\mcK):= \{x\in B(\mcH, \mcK): (x\otimes 1)U_\mcH= U_\mcK(x\otimes 1)\}$$
to denote the space of \emph{$\G$-intertwiners}. In this way, we obtain the $W^*$-category $\Rep(\G)$. Given $\omega \in L^1(\G)$, we will use the notation $U_\mcH(\omega):= (\id \otimes \omega)(U_\mcH)$.

The following well-known fact will be absolutely crucial to obtain our main results. Its proof consists of elementary calculations. 

\begin{Prop}[Fell's absorption principle]\label{Fell absorption} Let $(\mcH, U)$ be a unitary $\G$-representation. Then 
\begin{align*}
    &U \in \mathscr{L}^\G(U_{13} V_{23}, V_{23}), \quad (u_\G\otimes 1)U_{21}^*(u_\G\otimes 1)\in \mathscr{L}^\G(V_{13}U_{23}, V_{13}).
\end{align*}
\end{Prop}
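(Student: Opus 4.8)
The plan is to verify both intertwining relations directly from the representation property $(\id\otimes\Delta)(U)=U_{12}U_{13}$ together with the two ways in which the coproduct is implemented, namely $\Delta(x)=V(x\otimes 1)V^*=W^*(1\otimes x)W$. Throughout I work on the triple-leg Hilbert space $\mcH\otimes L^2(\G)\otimes L^2(\G)$ (resp.\ $L^2(\G)\otimes\mcH\otimes L^2(\G)$), where the last leg plays the role of the representation leg carrying $L^\infty(\G)$, and I view $U\in B(\mcH)\ovot L^\infty(\G)$ as an operator on $\mcH\otimes L^2(\G)$ via $L^\infty(\G)\subseteq B(L^2(\G))$; the only genuine work is to keep the leg bookkeeping straight.

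For the first relation, unwinding the definition of $\mathscr{L}^\G$ shows that $U\in\mathscr{L}^\G(U_{13}V_{23},V_{23})$ is exactly the assertion
\[
U_{12}\,U_{13}V_{23}=V_{23}\,U_{12}
\]
on $\mcH\otimes L^2(\G)\otimes L^2(\G)$, where $U$ is inserted as the morphism $U_{12}$ on the first two legs. To prove this, I would apply $\id\otimes\Delta$ to $U$ in two ways. On the one hand the representation property gives $(\id\otimes\Delta)(U)=U_{12}U_{13}$. On the other hand, substituting $\Delta(\cdot)=V(\cdot\otimes 1)V^*$ and reading off legs yields $(\id\otimes\Delta)(U)=V_{23}U_{12}V_{23}^*$. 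Equating the two expressions and multiplying on the right by $V_{23}$ gives the claim.

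The second relation is the mirror image and is proved by the same mechanism, now using the other implementation of $\Delta$ and the relation $(u_\G\otimes 1)V(u_\G\otimes 1)=W_{21}$. Here the condition $(u_\G\otimes 1)U_{21}^*(u_\G\otimes 1)\in\mathscr{L}^\G(V_{13}U_{23},V_{13})$, after inserting the morphism on the first two legs of $L^2(\G)\otimes\mcH\otimes L^2(\G)$, becomes an identity that I would first conjugate by $u_\G$ on the first leg. Since $u_\G$ is a self-adjoint unitary commuting with $U_{23}$, and since conjugating $V$ on its representation-space leg by $u_\G$ turns it into a leg-permuted copy of $W_{21}$, this conjugation replaces the $V$ appearing in the relation by the corresponding $W$. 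What remains is precisely the identity obtained from $(\id\otimes\Delta)(U)=U_{12}U_{13}$ together with $\Delta(\cdot)=W^*(1\otimes\cdot)W$, after the appropriate relabeling of legs and multiplication by the unitary $U_{21}$; all steps are equivalences because $u_\G$ and $U$ are unitaries.

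The computations are elementary and purely formal; the one point requiring care—and the place where an error is easiest to make—is the consistent tracking of which copy of $L^2(\G)$ carries the $L^\infty(\G)$ part of each operator, particularly in the second relation where the flip $U_{21}$, the conjugation by $u_\G$, and the relation $(u_\G\otimes 1)V(u_\G\otimes 1)=W_{21}$ all act on the same regular-representation leg.
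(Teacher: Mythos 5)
Your proof is correct, and it matches the paper's (unwritten) argument: the paper offers no proof beyond remarking that it consists of elementary calculations, and your verification—comparing $(\id\otimes\Delta)(U)=U_{12}U_{13}$ with the two implementations $\Delta(\cdot)=V(\cdot\otimes 1)V^*$ and $\Delta(\cdot)=W^*(1\otimes\cdot)W$—is precisely the intended one. In particular your leg bookkeeping in the second relation checks out: conjugating by $u_\G$ in the first leg turns $V_{13}$ into $W_{31}$ via $(u_\G\otimes 1)V(u_\G\otimes 1)=W_{21}$, and the resulting identity $U_{21}^*W_{31}U_{23}=W_{31}U_{21}^*$ rearranges to $W_{31}^*U_{21}W_{31}=U_{23}U_{21}$, which is exactly the representation property read with the $\mcH$-leg in position $2$ and the coproduct legs in positions $(3,1)$.
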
 

The following is trivial to prove:
\begin{Prop}\label{superelementary}
    Given a unital, normal $*$-representation $\pi: L^\infty(\check{\G})\to B(\mcH)$, $U_\pi:= (\pi\otimes \id)(V) \in B(\mcH)\ovot L^\infty(\G)$ defines a unitary $\G$-representation. The assignment $(\mathcal{H}, \pi) \mapsto (\mathcal{H},U_\pi)$ defines a fully faithful normal $*$-functor $\Rep(L^\infty(\check{\G}))\to \Rep(\G)$ that it is the identity on morphisms.
\end{Prop}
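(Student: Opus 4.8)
The plan is to verify in turn that $U_\pi$ is a unitary $\G$-representation, that the assignment is a well-defined normal $*$-functor which is the identity on morphisms, and finally that it is fully faithful. The whole argument rests on the single structural fact that $V=\check{W}$ is the left regular representation of $\check{\G}$, so that $V\in L^\infty(\check{\G})\ovot L^\infty(\G)$: the first leg of $V$ lives precisely in the algebra that $\pi$ represents.

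First I would show that $U_\pi:=(\pi\otimes\id)(V)$ is a unitary $\G$-representation. Unitarity is immediate, since $V$ is unitary and $\pi$ is a unital $*$-homomorphism. For the corepresentation identity I would use that $(L^2(\G),V)\in\Rep(\G)$, i.e.\ $(\id\otimes\Delta)(V)=V_{12}V_{13}$, together with the fact that $\pi$ acts on the first leg while $\Delta$ acts on the second, so that the two operations commute. Concretely, $(\id\otimes\Delta)(U_\pi)=(\pi\otimes\id\otimes\id)\big((\id\otimes\Delta)(V)\big)=(\pi\otimes\id\otimes\id)(V_{12}V_{13})=U_{\pi,12}U_{\pi,13}$, as required.

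Next I would treat functoriality. Since the functor is declared to be the identity on morphisms, the only thing to check is that a morphism $x\in{}_{L^\infty(\check{\G})}\mathscr{L}(\mcH,\mcK)$ (i.e.\ $x\pi(y)=\pi'(y)x$ for all $y\in L^\infty(\check{\G})$) is automatically a $\G$-intertwiner $U_\pi\to U_{\pi'}$. This follows directly because $V\in L^\infty(\check{\G})\ovot L^\infty(\G)$: applying $x\otimes 1$ and using $x\pi(y)=\pi'(y)x$ on the first leg of $V$ yields $(x\otimes 1)U_\pi=U_{\pi'}(x\otimes 1)$. Once this is established, preservation of composition, units and adjoints is trivial, and the induced map on each endomorphism algebra is simply an inclusion of von Neumann subalgebras of $B(\mcH)$, hence normal; this shows the assignment is a normal $*$-functor.

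The main point requiring care is fullness, i.e.\ the reverse inclusion $\mathscr{L}^\G(U_\pi,U_{\pi'})\subseteq{}_{L^\infty(\check{\G})}\mathscr{L}(\mcH,\mcK)$. Here I would start from $(x\otimes 1)U_\pi=U_{\pi'}(x\otimes 1)$ and apply the slice $\id\otimes\omega$ with $\omega\in L^1(\G)$ to the second leg, obtaining $x\,\pi\big((\id\otimes\omega)(V)\big)=\pi'\big((\id\otimes\omega)(V)\big)\,x$. Since $L^\infty(\check{\G})=[(\id\otimes\omega)(V)\mid\omega\in L^1(\G)]^{\sigma\textrm{-weak}}$ and both $\pi,\pi'$ are normal, I would pass to the $\sigma$-weak closure to conclude $x\pi(\check{x})=\pi'(\check{x})x$ for all $\check{x}\in L^\infty(\check{\G})$, i.e.\ $x$ is a $\check{\G}$-intertwiner. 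This density-plus-normality step is the only place where the argument is more than a one-line computation, and it is exactly what upgrades the inclusion of morphism spaces to an equality, giving full faithfulness.
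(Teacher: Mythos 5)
Your proof is correct, and it is precisely the elementary argument the paper has in mind: the paper states Proposition \ref{superelementary} without proof, declaring it trivial, and your verification via $(\id\otimes\Delta)(V)=V_{12}V_{13}$ for the corepresentation identity, together with fullness obtained by slicing $(x\otimes 1)U_\pi=U_{\pi'}(x\otimes 1)$ with $\id\otimes\omega$ and invoking the $\sigma$-weak density of $[(\id\otimes\omega)(V)\mid\omega\in L^1(\G)]$ in $L^\infty(\check{\G})$ plus normality of $\pi,\pi'$, is exactly the intended calculation. Nothing is missing; in particular you correctly identified the density-plus-normality step as the only nontrivial point.
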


Proposition \ref{superelementary} is illustrated by the following table, where $U\in B(\mcH)\ovot L^\infty(\G)$ denotes an arbitrary $\G$-representation:

{
\centering
    \setlength{\arrayrulewidth}{0.1mm}
\setlength{\tabcolsep}{18pt}
\renewcommand{\arraystretch}{1.3}
\begin{tabular}{|c|c|c|}
\hline
$\mathcal{K}$ & $\pi: L^\infty(\check{\G}) \to B(\mathcal{K})$ & $U_\pi \in B(\mathcal{K})\ovot L^\infty(\G)$ \\
\hline
$L^2(\G)$ & $\pi(\check{x}) = \check{x}$ & $V$ \\
$L^2(\G)$ & $\pi(\check{x}) = u_\G \check{x} u_\G$ & $W_{21}$ \\
$\mcH\otimes L^2(\G)$ & $\pi(\check{x}) = U^*(1\otimes \check{x})U$ & $U_{13} V_{23}$ \\
$L^2(\G)\otimes \mcH$ & $\pi(\check{x}) = (u_\G\otimes 1)U_{21}(u_\G\check{x}u_\G\otimes 1)U_{21}^*(u_\G\otimes 1)$ & $V_{13} U_{23}$ \\
\hline
\end{tabular}
\par
}

From now on, we will always identify $\Rep(L^\infty(\check{\G}))\subseteq \Rep(\G)$. This inclusion is strict if and only if $\G$ is non-compact, see \cite{DR25a}*{Proposition 4.1, Remark 4.2}.

The representation theory of $\G$ is governed by a particular $C^*$-algebra $C_0^u(\hat{\G})$ \cite{Kus01}. We recall some of the theory that will be relevant to us. Consider the (unbounded) antipode $S: \mathcal{D}(S)\to L^2(\G)$ of $\G$ and define $$L^1_\sharp(\G):=\{\omega\in L^1(\G)\mid \exists \omega^*\in L^1(\G): \forall x\in \mathcal{D}(S): \omega^*(x)= \overline{\omega(S(x)^*)}\}.$$
The  subspace $L_\sharp^1(\G)$ is norm-dense in $L^1(\G)$. The assignment $\omega \mapsto \omega^*$ turns the convolution algebra $L_\sharp^1(\G)$ into a Banach $*$-algebra for the norm $\|\omega\|_\sharp:= \max\{\|\omega\|, \|\omega^*\|\}$, and we define $C_0^u(\hat{\G})$ to be the $C^*$-completion of $L_\sharp^1(\G)$ with respect to the norm
$$\|\omega\|_u:= \sup_{\phi} \|\phi(\omega)\|, \quad \omega \in L^1_\sharp(\G),$$
where $\phi$ ranges over all non-degenerate $*$-representations of $L_\sharp^1(\G)$ on Hilbert spaces. There exists a unique unitary $\wW\in M(C_0^r(\G)\otimes C_0^u(\hat{\G}))$ such that $(\omega \otimes \id)(\wW)= \omega$ for all $\omega \in L_\sharp^1(\G)$.  It satisfies $(\Delta\otimes \id)(\wW)= \wW_{13}\wW_{23}$. Given a unitary $\G$-representation $\mcH=(\mcH, U)$, there is a unique non-degenerate $*$-representation
$\phi_\mcH= \phi_U : C_0^u(\hat{\G})\to B(\mcH)$ such that $\phi_{\mcH}(\omega)= (\id\otimes \omega)(U)$ for all $\omega\in L^1_\sharp(\G)$. Conversely, given a non-degenerate $*$-representation $\phi: C_0^u(\hat{\G})\to B(\mcH)$, the element $U_\phi:= (\id \otimes \phi)(\wW)_{21}\in  B(\mcH)\ovot L^\infty(\G)$ is a unitary $\G$-representation. The assignments $\mcH\mapsto \phi_\mcH$ and $\phi\mapsto U_\phi$ set up a bijective correspondence between the unitary $\G$-representations and the non-degenerate $*$-representations of $C_0^u(\hat{\G})$ on Hilbert spaces. There is a unique non-degenerate $*$-homomorphism
$\hat{\Delta}^u: C_0^u(\hat{\G})\to M(C_0^u(\hat{\G})\otimes C_0^u(\hat{\G}))$
such that $(\id \otimes \hat{\Delta}^u)(\wW) = \wW_{13}\wW_{12}$, and the pair $(C_0^u(\hat{\G}), \hat{\Delta}^u)$ is a $C^*$-bialgebra. We then have (cfr.\ \cite{BT03}*{Theorem 2.1})
$$\phi_{\mcH \obot \mcK}= (\phi_\mcH \otimes \phi_\mcK)\circ \hat{\Delta}^u, \quad \phi_{\mcH \otop \mcK}= (\phi_\mcH \otimes \phi_\mcK)\circ \hat{\Delta}^{u, \opp}, \quad  \mcH, \mcK\in \Rep(\G).$$
Repeating this discussion with $\G$ replaced by $\hat{\G}$, we obtain the $C^*$-bialgebra $(C_0^u(\G), \Delta^u)$ and the unitary $\hat{\wW}\in M(C_0^r(\hat{\G})\otimes C_0^u(\G))$. We write $\Ww:= \hat{\wW}_{21}^*\in M(C_0^u(\G)\otimes C_0^r(\hat{\G}))$. Consider the canonical surjective $*$-homomorphisms
\begin{align*}
    \pi_\G = \phi_{(L^2(\G),\hat{W}_{21})}: C_0^u(\G)\to C_0^r(\G), \quad \pi_{\hat{\G}}:= \phi_{(L^2(\G), W_{21})}: C_0^u(\hat{\G})\to C_0^r(\hat{\G}).
\end{align*}
We have $(\id \otimes \pi_{\hat{\G}})(\wW)= W$ and $(\pi_\G\otimes \id)(\Ww)= W$.
We also have the half-lifted comultiplications
$$\Delta^{r,u}: C_0^r(\G)\to M(C_0^r(\G)\otimes C_0^u(\G)), \quad \Delta^{u,r}: C_0^r(\G) \to M(C_0^u(\G)\otimes C_0^r(\G))$$
uniquely determined by
$\Delta^{r,u}\circ \pi_\G = (\pi_\G \otimes \id)\circ \Delta^u$ and $ \Delta^{u,r}\circ \pi_\G = (\id \otimes \pi_\G)\circ \Delta^u.$
More precisely, we can define
$$\Delta^{r,u}(x)= \vV(x\otimes 1)\vV^*, \quad \Delta^{u,r}(x)= \Ww^*(1\otimes x)\Ww, \quad x \in C_0^r(\G),$$
where $\vV:= (u_\G\otimes 1)\Ww_{21}(u_\G \otimes 1)\in M(C_0^r(\check{\G})\otimes C_0^u(\G))$.

There is a unique unitary $\WW \in M(C_0^u(\G)\otimes C_0^u(\hat{\G}))$ such that $\Ww_{12}\WW_{13}\wW_{23}= \wW_{23}\Ww_{12}$. It satisfies 
$$(\Delta^u\otimes \id)(\WW)= \WW_{13}\WW_{23}, \quad (\id \otimes \hat{\Delta}^u)(\WW)= \WW_{13}\WW_{12}, \quad (\pi_\G \otimes \id)(\WW)= \wW, \quad (\id \otimes \pi_{\hat{\G}})(\WW)= \Ww.$$
If $U\in B(\mcH)\ovot L^\infty(\G)$ is a unitary $\G$-representation, then $\uU:= (\id \otimes \phi_U)(\WW)_{21}\in M(\mathcal{K}(\mcH)\otimes C_0^u(\G))$ is the unique corepresentation of the $C^*$-bialgebra $(C_0^u(\G), \Delta^u)$ satisfying $(\id \otimes \pi_\G)(\uU)= U$.

\subsection{Actions and crossed products} A right \emph{$\G$-$W^*$-algebra} consists of a pair $(A, \alpha)$ where $A$ is a von Neumann algebra and $\alpha: A \to A\ovot L^\infty(\G)$ is a unital, normal, isometric $*$-homomorphism satisfying the coaction property $(\id\otimes \Delta)\alpha = (\alpha \otimes \id)\alpha$. We will work exclusively with right $\G$-$W^*$-algebras in this paper, so we leave out the prefix `right' from the terminology, and we will often employ the more intuitive notation $\alpha: A\curvearrowleft \G$.

If $(A, \alpha)$ is a $\G$-$W^*$-algebra, we define the \emph{crossed product von Neumann algebra}
$$A\rtimes_\alpha \G:=A^\rtimes:=  [\alpha(A)(1\otimes L^\infty(\check{\G}))]^{\sigma\text{-weak}}\subseteq A\ovot B(L^2(\G)).$$ We obtain the induced \emph{dual} action $\alpha^\rtimes: A^\rtimes \curvearrowleft \check{\G}$ by
$$\alpha^\rtimes: A^\rtimes\to A^\rtimes \ovot L^\infty(\check{\G}) :z \mapsto \check{V}_{23}z_{12}\check{V}_{23}^*.$$
It satisfies $\alpha^\rtimes(\alpha(a))= \alpha(a)\otimes 1$ and $\alpha^\rtimes(1\otimes\check{y})= 1\otimes \check{\Delta}(\check{y})$ for all $a\in A$ and $\check{y}\in L^\infty(\check{\G})$.

There exists a canonical unitary  $\G$-representation $U_\alpha\in B(L^2(A))\ovot L^\infty(\G)$ such that 
$(\pi_A \otimes \id)(\alpha(a))= U_\alpha(\pi_A(a)\otimes 1)U_\alpha^*$ for all $a \in A$ \cite{Va01}. We call $U_\alpha$ the \emph{unitary implementation} of $\alpha$. In fact, we can identify $L^2(A^\rtimes)= L^2(A)\otimes L^2(\G)$ in such a way that the standard $*$-representation $\pi_{A^\rtimes}$ is exactly $\pi_A \otimes \id$ and such that the anti-$*$-representation $\rho_{A^\rtimes}$ is given by
$$\rho_{A^\rtimes}(\alpha(a))= \rho_A(a)\otimes 1, \quad \rho_{A^\rtimes}(1\otimes \check{y}) = U_\alpha(1\otimes \rho_{\check{\G}}(\check{y}))U_\alpha^*, \quad a \in A, \quad \check{y}\in L^\infty(\check{\G}).$$

Given a $\G$-$W^*$-algebra $(A, \alpha)$, we associate to it the $W^*$-category $\Rep^\G(A)$. Its objects consist of triples $\mcH=(\mcH, \pi, U)$ where $\mcH$ is a Hilbert space, $\pi: A \to B(\mcH)$ is a unital, normal $*$-representation and $U\in B(\mcH)\ovot L^\infty(\G)$ is a unitary $\G$-representation satisfying $(\pi\otimes \id)(\alpha(a))= U(\pi(a)\otimes 1)U^*$ for all $a\in A$. For example, $L^2(A)= (L^2(A), \pi_A, U_\alpha)\in \Rep^\G(A)$. Given $\mcH, \mcH'\in \Rep^\G(A)$, the associated morphism space of intertwiners is denoted by 
${}_A\mathscr{L}^\G(\mcH, \mcH')$ and consists of all bounded linear operators $x: \mcH\to \mcH'$ satisfying $x\pi(a)= \pi'(a)x$ for all $a\in A$ and $(x\otimes 1)U= U'(x\otimes 1)$.

Given $\mcH\in \Rep^\G(A)$, we see that \begin{equation}\label{notation}
    S^\G(\mcH):= (\mcH\otimes L^2(\G), a \mapsto U_\mcH(\pi_\mcH(a)\otimes 1)U_{\mcH}^* = (\pi_\mcH\otimes \id)(\alpha(a)), V_{23})\in \Rep^\G(A).
\end{equation}
We have
\begin{equation}\label{semicoarse}
    {}_A \mathscr{L}^\G(S^\G(\mcH))= (\pi_\mcH \otimes \id)(A^\rtimes)'.
\end{equation}

The following is proven in \cite{DR25a}*{Lemma 3.8}:
\begin{Prop}\label{relation categories} Let $(A, \alpha)$ be a $\G$-$W^*$-algebra.
 Given $(\mathcal{H}, \kappa)\in \Rep(A^\rtimes)$, there exists a unique $(\mathcal{H}, \pi_\kappa, U_\kappa)\in \Rep^\G(A)$ such that 
    $\kappa\circ \alpha = \pi_\kappa$ and $(\kappa\otimes \id)(V_{23}) = U_\kappa.$
   The assignment $(\mathcal{H}, \kappa)\mapsto (\mathcal{H}, \pi_\kappa, U_\kappa)$  defines a fully faithful normal $*$-functor $\Rep(A^\rtimes)\to \Rep^\G(A)$ that is the identity on morphisms.
\end{Prop}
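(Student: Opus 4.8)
The plan is to produce the required data by hand, namely $\pi_\kappa := \kappa\circ\alpha$ and $U_\kappa := (\kappa\otimes\id)(V_{23})$, and then verify that the triple $(\mathcal{H},\pi_\kappa,U_\kappa)$ genuinely lies in $\Rep^\G(A)$. Both formulas are meaningful: since $1\in L^\infty(\check\G)$ we have $\alpha(A)\subseteq A^\rtimes$, so $\pi_\kappa$ is a well-defined unital normal $*$-homomorphism $A\to B(\mathcal{H})$; and because $V=\check{W}\in L^\infty(\check\G)\ovot L^\infty(\G)$ while $1\otimes L^\infty(\check\G)\subseteq A^\rtimes$, the element $V_{23}$ (which is $1_A\otimes V$, its first leg embedded into $A^\rtimes$) lies in $A^\rtimes\ovot L^\infty(\G)$, so $U_\kappa\in B(\mathcal{H})\ovot L^\infty(\G)$ is a well-defined unitary (as $\kappa\otimes\id$ is a unital normal $*$-homomorphism and $V$ is unitary). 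I would observe at the outset that uniqueness is immediate, since the two displayed identities literally prescribe $\pi_\kappa$ and $U_\kappa$; all the substance is in existence.

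Next I would check that $U_\kappa$ is a $\G$-representation. Writing the defining property of the $\G$-representation $(L^2(\G),V)$, namely $(\id\otimes\Delta)(V)=V_{12}V_{13}$, as the identity $(\id\otimes\Delta)(V_{23})=V_{23}V_{24}$ inside $A^\rtimes\ovot L^\infty(\G)\ovot L^\infty(\G)$ via the $*$-embedding $L^\infty(\check\G)\hookrightarrow A^\rtimes$ (licit, since $\Delta$ touches only the $L^\infty(\G)$-legs), and then applying the normal $*$-homomorphism $\kappa\otimes\id\otimes\id$, which commutes with $\id\otimes\Delta$, yields $(\id\otimes\Delta)(U_\kappa)=U_{\kappa,12}U_{\kappa,13}$ at once.

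For the covariance condition $(\pi_\kappa\otimes\id)(\alpha(a))=U_\kappa(\pi_\kappa(a)\otimes 1)U_\kappa^*$ — the heart of the argument — I would start from $(\pi_\kappa\otimes\id)(\alpha(a))=(\kappa\otimes\id)\big((\alpha\otimes\id)\alpha(a)\big)$, rewrite the inner term using the coaction identity $(\alpha\otimes\id)\alpha=(\id\otimes\Delta)\alpha$, and then use that $V$ implements the coproduct, $\Delta(x)=V(x\otimes 1)V^*$, to express $(\id\otimes\Delta)(\alpha(a))=V_{23}(\alpha(a)\otimes 1)V_{23}^*$ as an identity in $A^\rtimes\ovot L^\infty(\G)$. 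Applying $\kappa\otimes\id$ and using multiplicativity of $\kappa$ produces exactly $U_\kappa(\pi_\kappa(a)\otimes 1)U_\kappa^*$. This establishes $(\mathcal{H},\pi_\kappa,U_\kappa)\in\Rep^\G(A)$.

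Finally, for functoriality I would declare the functor to act as the identity on the underlying Hilbert-space operators and check that it is a well-defined, full and faithful normal $*$-functor. If $x$ intertwines $\kappa,\kappa'$, then $x\pi_\kappa(a)=x\kappa(\alpha(a))=\kappa'(\alpha(a))x=\pi_{\kappa'}(a)x$, while slicing the relation $(x\otimes 1)(\kappa\otimes\id)(V_{23})=(\kappa'\otimes\id)(V_{23})(x\otimes 1)$ gives $(x\otimes 1)U_\kappa=U_{\kappa'}(x\otimes 1)$; thus $x$ is a morphism in $\Rep^\G(A)$ and the assignment is a well-defined normal $*$-functor that is the identity on morphisms (whence faithful). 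For fullness, suppose $x$ intertwines the two triples: it commutes with $\kappa(\alpha(a))=\pi_\kappa(a)$ for all $a$ by the first relation, and the key step is that for $\omega\in L^1(\G)$ one has $\kappa\big(1\otimes(\id\otimes\omega)(V)\big)=(\id\otimes\omega)(U_\kappa)$, so the $\G$-representation intertwining relation for $x$ forces $x\kappa(1\otimes\check y)=\kappa'(1\otimes\check y)x$ for every $\check y$ in the $\sigma$-weakly dense set $\{(\id\otimes\omega)(V)\mid\omega\in L^1(\G)\}$, hence for all $\check y\in L^\infty(\check\G)$ by normality. Since $A^\rtimes$ is generated by $\alpha(A)$ and $1\otimes L^\infty(\check\G)$, normality of $\kappa,\kappa'$ then yields $x\kappa(z)=\kappa'(z)x$ for all $z\in A^\rtimes$. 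The main obstacle is precisely this last fullness step — recovering the action of $\kappa$ on the $1\otimes L^\infty(\check\G)$-generators purely from $U_\kappa$ by slicing with functionals $\omega\in L^1(\G)$ — together with the bookkeeping needed to keep $V_{23}\in A^\rtimes\ovot L^\infty(\G)$ throughout.
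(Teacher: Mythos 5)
Your proof is correct and complete: the well-definedness observations ($\alpha(A)\subseteq A^\rtimes$ and $V_{23}=1_A\otimes V\in A^\rtimes\ovot L^\infty(\G)$ since $V\in L^\infty(\check\G)\ovot L^\infty(\G)$), the corepresentation and covariance computations via $(\alpha\otimes\id)\alpha=(\id\otimes\Delta)\alpha$ and $\Delta(\cdot)=V(\cdot\otimes 1)V^*$, and in particular the fullness step recovering $x\kappa(1\otimes\check y)=\kappa'(1\otimes\check y)x$ by slicing $U_\kappa$ with $\omega\in L^1(\G)$ and using $L^\infty(\check\G)=[(\id\otimes\omega)(V)\mid\omega\in L^1(\G)]^{\sigma\text{-weak}}$ together with $\sigma$-weak continuity, all check out. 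Note that the paper itself gives no proof of this proposition but simply cites \cite{DR25a}*{Lemma 3.8}; your direct verification is precisely the standard argument one expects behind that citation, so there is no substantive divergence to report.
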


In what follows, we will always identify $\Rep(A^\rtimes)\subseteq \Rep^\G(A)$. If $(\mathcal{H},\kappa)\in \Rep(A^\rtimes)$, we will sometimes write $\kappa= \pi_\kappa\rtimes U_\kappa$. For example, the standard representation $\pi_A\otimes \id: A^\rtimes \to B(L^2(A)\otimes L^2(\G))$ of $A^\rtimes$ is $(\pi_A \otimes \id)\alpha\rtimes V_{23}$.

\subsection{Equivariant correspondences} In this subsection, we recall some theory from the article \cite{DCDR24}.

Let $(A, \alpha)$ and $(B, \beta)$ be $\G$-$W^*$-algebras. A \emph{$\G$-$A$-$B$-correspondence} consists of the data $(\mcH, \pi, \rho, U)$ where $\mcH$ is a Hilbert space, $\pi: A\to B(\mcH)$ is a unital, normal $*$-representation, $\rho: B \to B(\mcH)$ is a unital, normal, anti-$*$-representation and $U\in B(\mcH)\ovot L^\infty(\G)$ is a unitary $\G$-representation such that
$$\pi(a)\rho(b) =\rho(b)\pi(a), \quad (\pi\otimes \id)(\alpha(a))= U(\pi(a)\otimes 1)U^*, \quad (\rho\otimes R)(\beta(b))= U^*(\rho(b)\otimes 1)U$$
for all $a\in A$ and $b\in B$. Often, we will simply write $\mcH \in \Corr^\G(A,B)$, and leave the representation theoretic data $\pi, \rho, U$ implicit. 
A $\G$-$A$-$B$-correspondence $(\mcH, \pi, \rho, U)$ is called $\G$-$A$-$B$-\emph{Morita correspondence} if $\pi$ and $\rho$ are faithful and if $\pi(A)'= \rho(B)$. If such a Morita correspondence exists, then $(A,\alpha)$ and $(B, \beta)$ are called $\G$-$W^*$-Morita equivalent and we write $(A, \alpha) \sim_\G (B, \beta)$. We refer the reader to \cites{DR25a,DR25b} for more information about this notion. 

If $\mcH, \mcH'\in \Corr^\G(A,B)$, we write ${}_A\mathscr{L}_B^\G(\mcH, \mcH')$ for the space of bounded linear operators $x: \mcH \to \mcH'$ such that $x\pi(a)= \pi'(a)x, x\rho(b)= \rho'(b)x$ and $(x\otimes 1)U= U'(x\otimes 1)$ for all $a\in A$ and all $b\in B$. In this way, $\Corr^\G(A,B)$ becomes a $W^*$-category. Note also that taking $\G$ to be the trivial group, we simply recover the category $\Corr(A,B)$ of $A$-$B$-correspondence as introduced by Connes \cite{Con80}.

\begin{Exa} Let $(A, \alpha)$ and $(B, \beta)$ be two given $\G$-$W^*$-algebras.
    \begin{enumerate}\setlength\itemsep{-0.5em}
        \item The trivial $\G$-$A$-$A$-correspondence is $L^2(A):=(L^2(A), \pi_A, \rho_A, U_\alpha)$. 
        \item The semi-coarse $\G$-$A$-$A$-correspondence is $S_A^\G:= (L^2(A)\otimes L^2(\G), (\pi_A\otimes \id) \alpha, (\rho_A\otimes \rho_\G) \alpha, V_{23}).$
    \end{enumerate}
\end{Exa}

There are some non-trivial ways to construct new equivariant correspondences from given ones. We end this subsection by discussing two such constructions.

\textbf{Crossed products of equivariant correspondences} \cite{DCDR24}*{Proposition 5.21}. Given $\mcH=(\mcH, \pi,\rho, U)\in \Corr^\G(A,B)$, we define  $\mcH^\rtimes:=(\mcH\otimes L^2(\G), \pi^\rtimes, \rho^\rtimes, \check{V}_{23})\in \Corr^{\check{\G}}(A^\rtimes, B^\rtimes)$, where 
$$\pi^\rtimes(s):= (\pi \otimes \id)(s), \quad \rho^\rtimes(t) = U(\rho\otimes \check{J}(-)^*\check{J})(t)U^*, \quad s \in A^\rtimes, \quad t \in B^\rtimes.$$

\textbf{Composition of equivariant correspondences} \cite{DCDR24}*{Proposition 5.6}. There is a natural operation
$\boxtimes_B: \Corr^\G(A,B)\times \Corr^\G(B, C)\to \Corr^\G(A,C).$
More concretely, if $\mcH= (\mcH, \pi_\mcH, \rho_\mcH, U_\mcH)\in \Corr^\G(A,B)$ and $\mcK= (\mcK, \pi_\mcK, \rho_\mcK, U_\mcK)\in \Corr^\G(B,C)$, then $\mathscr{L}_B(L^2(B), \mcH)\curvearrowleft B$ via $xb:= x\pi_B(b)$ and $B\curvearrowright \mcK$ via $b\xi:= \pi_\mcK(b)\xi$ where $b\in B, x\in \mathscr{L}_B(L^2(B), \mcH)$ and $\xi \in \mcK$. It thus makes sense to form the (algebraic) balanced tensor product $\mathscr{L}_B(L^2(B), \mcH)\odot_B \mcK$, and we endow it with the semi-inner product uniquely determined by
$$\langle x\otimes_B \xi, x'\otimes_B \xi'\rangle:= \langle \xi, \pi_\mcK(\langle x, x'\rangle_B)\xi'\rangle, \quad x, x'\in \mathscr{L}_B(L^2(B), \mcH), \quad \xi, \xi'\in \mcK,$$
where $\langle x,x'\rangle_B$ is the unique element of $B$ satisfying $\pi_B(\langle x,x'\rangle_B)= x^*x'$. By separation-completion, we obtain the Hilbert space $\mcH\boxtimes_B \mcK$. It carries a natural $A$-$C$-correspondence structure, given by
$$\pi_\boxtimes(a)\rho_\boxtimes(c) (x\otimes_B \xi):= \pi_\mcH(a)x\otimes_B \rho_\mcK(c)\xi, \quad a\in A,\quad c\in C, \quad x\in \mathscr{L}_B(L^2(B), \mcH), \quad \xi \in \mcK.$$
Further, viewing $\mcH\otimes L^2(\G)\in \Corr(A\ovot L^\infty(\G), B \ovot L^\infty(\G))$ and $\mcK\otimes L^2(\G)\in \Corr(B \ovot L^\infty(\G), C\ovot L^\infty(\G))$ in the obvious way, we have $$\mathscr{L}_{B\ovot L^\infty(\G)}(L^2(B\ovot L^\infty(\G)), \mcH\otimes L^2(\G))= \mathscr{L}_{B\ovot L^\infty(\G)}(L^2(B)\otimes L^2(\G), \mcH\otimes L^2(\G))= \mathscr{L}_B(L^2(B),\mcH)\ovot L^\infty(\G),$$
together with the natural identification
\begin{equation}\label{naturalunitary}(\mcH\boxtimes_B \mcK)\otimes L^2(\G)\to ( \mcH\otimes L^2(\G))\boxtimes_{B\ovot L^\infty(\G)} (\mcK\otimes L^2(\G)): (x\otimes_B \xi)\otimes \eta\mapsto (x\otimes 1)\otimes_{B\ovot L^\infty(\G)} (\xi \otimes \eta)\end{equation}
of $A\ovot L^\infty(\G)$-$C \ovot L^\infty(\G)$-correspondences. The unitary 
$$U_\boxtimes: (\mcH\boxtimes_B \mcK)\otimes L^2(\G) \to (\mcH\otimes L^2(\G))\boxtimes_{B\ovot L^\infty(\G)} (\mcK\otimes L^2(\G)) \cong  (\mcH\boxtimes_B \mcK)\otimes L^2(\G)$$
uniquely determined by
$$U_\boxtimes((x\otimes_B\xi)\otimes \eta)= U_\mcH(x\otimes 1)U_\beta^* \otimes_{B\ovot L^\infty(\G)} U_\mcK(\xi \otimes \eta)$$
endows the Hilbert space $\mcH\boxtimes_B \mcK$ with a $\G$-representation such that $\mcH\boxtimes_B \mcK := (\mcH\boxtimes_B \mcK, \pi_\boxtimes, \rho_\boxtimes, U_\boxtimes)\in \Corr^\G(A,C)$. We refer to this operation as the \emph{Connes fusion tensor product} of equivariant correspondences. The Connes fusion tensor product is associative in a natural way. 

\subsection{The W*-algebraic Eilenberg-Watts theorem} 

We believe it is beneficial for the reader to recall the (non-equivariant) $W^*$-algebraic Eilenberg-Watts theorem, due to M. Rieffel \cite{Rie74}. Note that instead of the Hilbert space picture of correspondences, Rieffel rather works in the equivalent framework of (self-dual) Hilbert $W^*$-$A$-$B$-bimodules \cite{BDH88}*{Théorème 2.2}. 

Let $A,B$ be von Neumann algebras. If $\mcG\in \Corr(A,B)$, define a functor $P(\mcG):= F_\mcG\in \Fun(\Rep(B), \Rep(A))$ as follows:
\begin{itemize}\setlength\itemsep{-0.5em}
    \item If $\mcH\in \Rep(B)= \Corr(B,\C)$, put $F_\mcG(\mcH)= \mcG\boxtimes_B \mcH \in \Corr(A,\C)= \Rep(A)$.
    \item If $\mcH, \mcH'\in \Rep(B)$ and $x\in {}_B\mathscr{L}(\mcH, \mcH')$, define
    $$F_\mcG(x):= 1\boxtimes_B x: \mcG\boxtimes_B \mcH\to \mcG\boxtimes_B \mcH', \quad F_\mcG(x)(y\otimes_B \xi):= y\otimes_B x\xi, \quad y \in \mathscr{L}_B(L^2(B), \mcG), \quad \xi \in \mcH.$$
\end{itemize}
If $\mcG, \mcG'\in \Corr(A,B)$, $x\in {}_A\mathscr{L}_B(\mcG, \mcG')$ and $\mcH\in \Rep(B)$, we define the $A$-linear morphism
$$P(x)_\mcH: \mcG \boxtimes_B \mcH\to \mcG'\boxtimes_B \mcH, \quad P(x)_\mcH(y\otimes_B \xi) = xy\otimes_B \xi, \quad y\in \mathscr{L}_B(L^2(B), \mcG), \quad \xi\in \mcH.$$
Then $P(x)=\{P(x)_\mcH\}_{\mcH\in \Rep(B)}\in \Nat(F_\mcG, F_{\mcG'})$, and in this way we obtain the functor
$$P: \Corr(A,B)\to \Fun(\Rep(B), \Rep(A)).$$

On the other hand, if $F\in \Fun(\Rep(B), \Rep(A))$, then put $(\mcG_F, \pi_F):= F((L^2(B), \pi_B))\in \Rep(A)$. If $b\in B$, then $\rho_B(b)\in \pi_B(B)'={}_B\mathscr{L}(L^2(B))$, so it makes sense to define
$$\rho_F(b):= F(\rho_B(b))\in {}_A\mathscr{L}(\mcG_F).$$
Thus, by construction, $\rho_F(b)\pi_F(a)= \pi_F(a)\rho_F(b)$ for all $a\in A$ and all $b\in B$, so that $(\mcG_F, \pi_F, \rho_F)\in \Corr(A,B)$. If $F,G \in \Fun(\Rep(B), \Rep(A))$ and $\eta \in \Nat(F,G)$, then
$$\eta_{L^2(B)}\rho_F(b) = \eta_{L^2(B)} F(\rho_B(b))= G(\rho_B(b)) \eta_{L^2(B)}= \rho_G(b)\eta_{L^2(B)},$$
so it is clear that $\eta_{L^2(B)}\in {}_A\mathscr{L}_B(\mcG_F, \mcG_G)$. In this way, we obtain the functor
$$Q: \Fun(\Rep(B), \Rep(A))\to \Corr(A,B).$$

\begin{Theorem}[W*-algebraic Eilenberg-Watts theorem]\label{nonequivariant}
    The canonical functors
    \begin{align*}
        &P: \Corr(A,B)\to \Fun(\Rep(B), \Rep(A)): \mcG \mapsto F_\mcG: (\mcH\mapsto \mcG\boxtimes_B \mcH),\\
        &Q: \Fun(\Rep(B), \Rep(A))\to \Corr(A,B): F \mapsto F(L^2(B))
    \end{align*}
    are quasi-inverse to each other.
\end{Theorem}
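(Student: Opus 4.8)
The plan is to show that the two functors $P$ and $Q$ are quasi-inverse by exhibiting natural unitary isomorphisms $Q\circ P \cong \id_{\Corr(A,B)}$ and $P\circ Q \cong \id_{\Fun(\Rep(B),\Rep(A))}$, leaning heavily on the fact that $L^2(B)\in\Rep(B)$ is a generator (since $\pi_B$ is faithful) and on the morphism-space bijection \eqref{Rieffelbijection} recalled from \cite{Rie74}. First I would analyze $Q\circ P$. For $\mcG\in\Corr(A,B)$, the functor $P(\mcG)=F_\mcG$ sends $L^2(B)$ to $\mcG\boxtimes_B L^2(B)$, so $Q(F_\mcG)=\mcG\boxtimes_B L^2(B)$ as an $A$-$B$-correspondence. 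I would write down the canonical map
\begin{equation*}
\mcG\boxtimes_B L^2(B)\to \mcG,\qquad y\otimes_B \xi \mapsto y\xi,\quad y\in\mathscr{L}_B(L^2(B),\mcG),\ \xi\in L^2(B),
\end{equation*}
and check it is a unitary of $A$-$B$-correspondences, where the right $B$-action on $\mcG$ comes from $\rho_\mcG$ and on $\mcG\boxtimes_B L^2(B)$ from $\rho_B$ through the $Q$-construction; this is the standard right-unit isomorphism for Connes fusion and is natural in $\mcG$. The computation that this intertwines all the relevant $A$- and $B$-actions is routine bookkeeping with the definitions of $\pi_\boxtimes,\rho_\boxtimes$ and of $\rho_F$.

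Next I would treat $P\circ Q$. Fix $F\in\Fun(\Rep(B),\Rep(A))$ and set $\mcG_F=F(L^2(B))$ as in the $Q$-construction. I must produce a natural unitary isomorphism $F_{\mcG_F}\cong F$, i.e.\ for each $\mcH\in\Rep(B)$ a unitary
\begin{equation*}
\theta_\mcH: \mcG_F\boxtimes_B \mcH = F(L^2(B))\boxtimes_B \mcH \xrightarrow{\ \sim\ } F(\mcH)
\end{equation*}
in ${}_A\mathscr{L}$, natural in $\mcH$. The key idea is that every object of $\Rep(B)$ is, up to the multiplicity isomorphism \eqref{canonicalisomorphismmultiplicity} and the generator property, built from $L^2(B)$: since $L^2(B)$ is a generator there is an isometry $\mcH\hookrightarrow L^2(B)\otimes\mcK$ for a suitable Hilbert space $\mcK$, and $F$ commutes with the amplification via $C^F_{L^2(B),\mcK}:F(L^2(B))\otimes\mcK\to F(L^2(B)\otimes\mcK)$. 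I would first define $\theta_{L^2(B)}$ as the right-unit unitary $\mcG_F\boxtimes_B L^2(B)\to F(L^2(B))$ described above, then extend to amplifications $L^2(B)\otimes\mcK$ using that both $F_{\mcG_F}$ and $F$ intertwine amplification with the canonical multiplicity isomorphisms (the diagram \eqref{canmultiiso} and its analogue for the fusion side), and finally transport along the generating isometries and their adjoint projections to arbitrary $\mcH$. Naturality in $\mcH$ is then forced on morphisms between sums of copies of $L^2(B)$, and extends to all of $\Rep(B)$ by the generator property.

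The main obstacle I anticipate is the last gluing step: defining $\theta_\mcH$ coherently for \emph{every} $\mcH$ and proving it is natural, since a given $\mcH$ embeds in $L^2(B)\otimes\mcK$ non-canonically and one must check the resulting $\theta_\mcH$ is independent of all choices. The clean way to sidestep most of this is to use \eqref{Rieffelbijection} as the organizing principle: because $L^2(B)$ is a generator, a natural transformation between two functors in $\Fun(\Rep(B),\Rep(A))$ is completely determined by its single component at $L^2(B)$, and conversely an intertwiner at $L^2(B)$ that commutes with all of ${}_B\mathscr{L}(L^2(B))=\pi_B(B)'$ extends uniquely to a natural transformation. Thus I would define $\theta$ by specifying $\theta_{L^2(B)}$, verify it lies in the image-space of the bijection \eqref{Rieffelbijection} (i.e.\ that it commutes with $F_{\mcG_F}(y)$ and $F(y)$ for all $y\in{}_B\mathscr{L}(L^2(B))$, which follows from how $\rho_F$ is built from $F$ applied to $\rho_B(B)\subseteq\pi_B(B)'$), and invoke \eqref{Rieffelbijection} to obtain a genuine natural isomorphism automatically, also concluding invertibility from invertibility of its $L^2(B)$-component. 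The functoriality of $P$ and $Q$ and the compatibility of $\theta$ with morphisms $F\Rightarrow G$ likewise reduce, via \eqref{Rieffelbijection}, to checking a single equality at $L^2(B)$, which is exactly the computation already displayed in the $Q$-construction showing $\eta_{L^2(B)}\in{}_A\mathscr{L}_B(\mcG_F,\mcG_G)$.
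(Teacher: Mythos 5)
Your proposal is correct and follows essentially the same route as the paper: the right-unit unitary $\mcG\boxtimes_B L^2(B)\cong \mcG$ gives $Q\circ P\cong \id_{\Corr(A,B)}$, and the bijection \eqref{Rieffelbijection} applied at the generator $L^2(B)$ supplies the other half. The only difference is presentational: the paper packages the second step as the observation that bijectivity of \eqref{Rieffelbijection} says $Q$ is fully faithful and then invokes the standard categorical fact that a fully faithful $Q$ with $Q\circ P\cong \id$ is automatically quasi-inverse to $P$, whereas you unwind that abstract step by explicitly extending the component $\theta_{L^2(B)}$ to a natural unitary $P\circ Q\cong \id$ via \eqref{Rieffelbijection} --- correctly discarding, as unnecessary, your initial gluing-over-amplifications detour.
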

\begin{proof}
    Given $\mcG\in \Corr(A,B)$, it is clear that $\rho_{F_\mcG}(b) = \rho_\boxtimes(b)$ for $b\in B$, so we have the unitary isomorphism
    $$QP(\mcG)= \mcG\boxtimes_B L^2(B)\cong \mcG: y \otimes_B \xi \mapsto y\xi$$
    of $A$-$B$-correspondences. Thus, it is clear that there is a natural unitary isomorphism $Q\circ P\cong \id_{\Corr(A,B)}$. On the other hand, the bijectivity of the map \eqref{Rieffelbijection} (applied to the generator $\mcH = L^2(B)\in \Rep(B)$) can be restated by saying that $Q$ is fully faithful, so $P$ and $Q$ must be quasi-inverse to each other.
\end{proof}

\subsection{W*-tensor and module categories}

\begin{Def}\label{W*tensorcategory}
    A $W^*$-tensor category consists of a $W^*$-category $\mathscr{C}$,  a functor $\otimes: \mathscr{C}\times \mathscr{C}\to \mathscr{C}$, which is a normal $*$-functor in each variable, and unitary isomorphisms
    $$a_{X,Y,Z}: X\otimes (Y\otimes Z)\cong (X\otimes Y)\otimes Z, \quad X,Y,Z\in \mathscr{C},$$
    called \emph{associators}, which are natural in $X,Y,Z$ and satisfy the pentagon axiom.
\end{Def}

Note that we do not ask for the existence of a monoidal unit. 

The following examples of $W^*$-tensor categories arising from a locally compact quantum group $\G$ are the main examples of interest for this paper:
\begin{Exa}
   Consider the $W^*$-category $\Rep(\G)$ of unitary $\G$-representations. It becomes a $W^*$-tensor category for 
    $$(\mcH, U_\mcH)\otop (\mcK, U_\mcK) := (\mcH\otimes \mcK, U_{\mcH, 13}U_{\mcK, 23})$$
(on morphisms, the tensor product $\otop$ agrees with the tensor product on the underlying category of Hilbert spaces).
    The associators in $\Rep(\G)$ are the usual associators in the underlying category of Hilbert spaces, and we will always treat them as identities. Similarly, $\Rep(\G)$ becomes a $W^*$-tensor category for 
    $$(\mcH, U_\mcH)\obot (\mcK, U_\mcK):= (\mcH \otimes\mcK, U_{\mcK, 23}U_{\mcH,13}).$$
    Both of these tensor products on $\Rep(\G)$ will occur in this paper. The relation between them is described as follows: 
    $$\Sigma_{\mcH, \mcK}\in \mathscr{L}^\G(\mcH \otop \mcK, \mcK \obot \mcH).$$
\end{Exa}

\begin{Exa}\label{ideal property} We have already seen that $\Rep(L^\infty(\check{\G}))\subseteq \Rep(\G)$. In fact, we even have that $\Rep(L^\infty(\check{\G}))$ is an \emph{ideal} inside $\Rep(\G)$, in the sense that 
$$\Rep(L^\infty(\check{\G}))\otop \Rep(\G)\subseteq \Rep(L^\infty(\check{\G})), \quad \Rep(\G)\otop \Rep(L^\infty(\check{\G}))\subseteq \Rep(L^\infty(\check{\G})).$$
 Indeed, this follows from the table below Proposition \ref{superelementary}, together with the fact that the identity representation $L^\infty(\check{\G})\subseteq B(L^2(\G))$ (corresponding to the $\G$-representation $V$) generates $\Rep(L^\infty(\check{\G}))$. In particular, $(\Rep(L^\infty(\check{\G})), \otop)$  becomes a $W^*$-tensor category in its own right. The tensor product $\otop$ on $\Rep(L^\infty(\check{\G}))$ is more explicitly described by
 $$(\mcH, \pi)\otop (\mcH', \pi') = (\mcH\otimes \mcH', (\pi\otimes \pi')\circ \check{\Delta}).$$
\end{Exa}

\begin{Def}
    A braided $W^*$-tensor category $\mathscr{C}$ is a $W^*$-tensor category $(\mathscr{C}, \otimes)$ together with unitary isomorphisms
    $$\gamma_{X,Y}: X\otimes Y \to Y \otimes X, \quad X,Y \in \mathscr{C},$$
    natural in $X,Y$, such that the braid relations (see \cite{EGNO15}*{Definition 8.1.1}, up to a convention change for the associators) are satisfied.
\end{Def}

We will encounter examples of braided $W^*$-tensor categories in Section \ref{5}.
\begin{Def}\label{module category}
    Let $\mathscr{C}$ be a $W^*$-tensor category. By a (right) $W^*$-module category $\mathscr{M}\curvearrowleft \mathscr{C}$, we mean a $W^*$-category $\mathscr{M}$ endowed with a functor
$\lhd: \mathscr{M}\times \mathscr{C}\to \mathscr{M}$
and unitary isomorphisms
$$\phi_{M,X,Y}: M\lhd (X\otimes Y) \cong (M\lhd X) \lhd Y, \quad M\in \mathscr{M}, \quad X,Y\in \mathscr{C},$$
natural in $M,X,Y$, such that:

\begin{enumerate}\setlength\itemsep{-0.5em}
    \item For all $M\in \mathscr{M}$, $M\lhd -: \mathscr{C}\to \mathscr{M}$ defines a normal $*$-functor.
    \item For all $X\in \mathscr{C}$, $-\lhd X: \mathscr{M}\to \mathscr{M}$ defines a normal $*$-functor.
    \item The following diagram commutes for all $M\in \mathscr{M}$ and all $X,Y,Z \in \mathscr{C}$:
$$
\begin{tikzcd}
M\lhd (X\otimes (Y\otimes Z)) \arrow[d, "1\lhd {a_{X,Y,Z}}"'] \arrow[rrrr, "{\phi_{M,X,Y\otimes Z}}"] &  &                                                                 &  & (M\lhd X)\lhd (Y\otimes Z) \arrow[d, "{\phi_{M\lhd X,Y,Z}}"] \\
M\lhd ((X\otimes Y)\otimes Z) \arrow[rr, "{\phi_{M, X\otimes Y,Z}}"]                                  &  & (M\lhd (X\otimes Y))\lhd Z \arrow[rr, "{\phi_{M,X,Y}\lhd 1}"] &  & ((M\lhd X)\lhd Y)\lhd Z                                      
\end{tikzcd}$$
\end{enumerate}

\end{Def}

\begin{Rem} All module categories in this paper will be categories of Hilbert spaces endowed with extra structure. The isomorphisms $\phi_{M,X,Y}$ occurring in Definition \ref{module category} will always be the canonical associators in the underlying category of Hilbert spaces, and will therefore always be treated as identities (i.e.\ we will not care about bracketing).
\end{Rem}

\begin{Def}
    Given a $W^*$-tensor category $\mathscr{C}$ and two $W^*$-module categories $\mathscr{M} \curvearrowleft \mathscr{C}$ and $\mathscr{N}\curvearrowleft \mathscr{C}$, a normal $\mathscr{C}$-module $*$-functor consists of a normal $*$-functor $F: \mathscr{M}\to \mathscr{N}$ together with unitary isomorphisms
    $$S_{M,X}: F(M\lhd X) \to F(M) \lhd X,\quad M\in \mathscr{M}, \quad X\in \mathscr{C},$$
    natural in $M,X$, such that the diagram
    \begin{equation}\label{module functor}
\begin{tikzcd}
F(M\lhd (X\otimes Y)) \arrow[rrrr, "{F(\phi_{M,X,Y})}"] \arrow[d, "{S_{M,X\otimes Y}}"] &  &                    &  & F((M\lhd X)\lhd Y) \arrow[d, "{S_{M\lhd X,Y}}"] \\
F(M)\lhd (X\otimes Y) \arrow[rr, "{\phi_{F(M),X,Y}}"']                                 &  & (F(M)\lhd X)\lhd Y &  & F(M\lhd X)\lhd Y \arrow[ll, "{S_{M,X}\lhd 1}"] 
\end{tikzcd}
    \end{equation}
commutes for all $M\in \mathscr{M}$ and all $X,Y \in \mathscr{C}$. We will write $\Fun_\mathscr{C}(\mathscr{M}, \mathscr{N})$ for the class of normal $\mathscr{C}$-module $*$-functors. 
\end{Def}

Often, we will simply write $F\in \Fun_{\mathscr{C}}(\mathscr{M}, \mathscr{N})$, leaving the unitaries $\{S_{M,X}\}$ implicit.\textbf{}

\begin{Def}
    Given two $W^*$-module categories $\mathscr{M}\curvearrowleft \mathscr{C}$ and $\mathscr{N}\curvearrowleft \mathscr{C}$ and $F,G\in \Fun_{\mathscr{C}}(\mathscr{M}, \mathscr{N})$, we denote the collection of natural transformations $\eta: F \implies G$ such that the diagrams
    \begin{equation}\label{rio}
        \begin{tikzcd}
F(M\lhd X) \arrow[d, "\eta_{M\lhd X}"'] \arrow[rr, "{S_{M,X}}"] &  & F(M)\lhd X \arrow[d, "\eta_M\lhd 1"] \\
G(M\lhd X) \arrow[rr, "{S_{M,X}}"]                              &  & G(M)\lhd X                          
\end{tikzcd}
    \end{equation}
commute for all $M\in \mathscr{M}$ and all $X\in \mathscr{C}$ by $\Nat_\mathscr{C}(F,G)$.
\end{Def}

Note that we should designate the unitaries $S_{X,M}$ in \eqref{rio} with a label referring to which functor they are associated. We will often refrain from doing this, for ease of notation.

Fix $F,G,H \in \Fun_\mathscr{C}(\mathscr{M}, \mathscr{N})$. Given $\eta \in \Nat_\mathscr{C}(F,G)$ and $\theta \in \Nat_\mathscr{C}(G,H)$, it is easily verified that $\theta\circ \eta:= (\theta_M\circ \eta_M)_{M\in \mathscr{M}}\in \Nat_\mathscr{C}(F,H)$. Thus, $\Fun_\mathscr{C}(\mathscr{M}, \mathscr{N})$ becomes a category in an obvious way.

Two $W^*$-module categories $\mathscr{M}\curvearrowleft \mathscr{C}$ and $\mathscr{N}\curvearrowleft \mathscr{C}$ are called \emph{equivalent} if there exist $F\in \Fun_\mathscr{C}(\mathscr{M}, \mathscr{N})$ and $G\in \Fun_\mathscr{C}(\mathscr{N}, \mathscr{M})$ together with unitary isomorphisms in $\Nat_\mathscr{C}(GF, \id_\mathscr{M})$ and $\Nat_\mathscr{C}(FG, \id_\mathscr{N})$.\footnote{Here, the compositions $GF$ and $FG$ can be viewed as $\mathscr{C}$-module functors in a natural way.}

Plenty of examples of $W^*$-module categories and their associated module $*$-functors will be discussed in the following sections.

\section{Equivariant Eilenberg-Watts theorem - \texorpdfstring{$\Rep(\G)$}{TEXT}-module version}\label{Section 3}

Throughout this section, let $\G$ be a locally compact quantum group and let $(A, \alpha)$ and $(B, \beta)$ be $\G$-$W^*$-algebras. We endow the $W^*$-category $\Rep(\G)$ with the tensor product
$$(\mcH,U_\mcH)\otop (\mcK, U_\mcK):= (\mcH \otimes \mcK, U_{\mcH, 13}U_{\mcK, 23}).$$

\subsection{Module category associated to dynamical system}\label{3.1} We start by recalling the construction of the $W^*$-module category $\Rep^\G(A)\curvearrowleft \Rep(\G)$ from \cite{DR25a}.

Given $\mcH = (\mcH, \pi_\mcH, U_\mcH)\in \Rep^\G(A)$ and $\mcK= (\mcK, U_\mcK)\in \Rep(\G)$, it is elementary to verify that $\mcH\lhd \mcK:= (\mcH \otimes \mcK, \pi_{\mcH\rhd \mcK}: a \mapsto \pi_\mcH(a)\otimes 1, U_\mcH \otop U_\mcK)\in \Rep^\G(A)$. If $x\in {}_A\mathscr{L}^\G(\mcH, \mcH')$ and $y \in \mathscr{L}^\G(\mcK, \mcK')$, then $x\lhd y := x\otimes y\in {}_A \mathscr{L}^\G(\mcH\lhd \mcK, \mcH'\lhd \mcK')$. In this way, we obtain the functor
$\lhd: \Rep^\G(A)\times \Rep(\G)\to \Rep^\G(A),$
and it is also obvious that
$$\mcH \lhd (\mcK \otop \mcK') = (\mcH\lhd \mcK)\lhd \mcK',\quad \mcH \in \Rep^\G(A), \quad \mcK, \mcK'\in \Rep(\G),$$
so that we obtain the right $W^*$-module category $\Rep^\G(A)\curvearrowleft \Rep(\G)$.

The following can be seen as a generalization of the ideal property of $\Rep(L^\infty(\check{\G}))\subseteq \Rep(\G)$ (cfr.\ Example \ref{ideal property}):
\begin{Lem}\label{restrictionn} 
$\Rep(A^\rtimes)\lhd \Rep(\G)\subseteq \Rep(A^\rtimes)$ and $\Rep^\G(A)\lhd \Rep(L^\infty(\check{\G}))\subseteq \Rep(A^\rtimes).$
\end{Lem}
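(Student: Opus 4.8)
The plan is to reduce everything, via Proposition \ref{relation categories}, to exhibiting normal $*$-representations of $A^\rtimes$: showing that an object $(\mathcal{N},\pi,U)\in \Rep^\G(A)$ lies in $\Rep(A^\rtimes)$ amounts to producing a normal $*$-representation $\kappa\colon A^\rtimes\to B(\mathcal{N})$ with $\kappa\circ\alpha=\pi$ and $(\kappa\otimes\id)(V_{23})=U$. Since the functor of Proposition \ref{relation categories} is fully faithful and identity-on-morphisms, its image is closed in $\Rep^\G(A)$ under isomorphism and (being normal, hence direct-sum preserving) under direct sums. For closure under subobjects the key observation is that ${}_A\mathscr{L}^\G(\mathcal{N})=\kappa(A^\rtimes)'$: writing $\theta:=\kappa(1\otimes-)$, so that $U=(\theta\otimes\id)(V)$, and slicing the second leg of $V$ (recall $L^\infty(\check{\G})=[(\id\otimes\omega)(V)]$) shows $(x\otimes 1)U=U(x\otimes 1)$ is equivalent to $x\in\theta(L^\infty(\check{\G}))'$, so ${}_A\mathscr{L}^\G(\mathcal{N})=\pi(A)'\cap\theta(L^\infty(\check{\G}))'=\kappa(A^\rtimes)'$. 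Hence any projection in ${}_A\mathscr{L}^\G(\mathcal{N})$ sits in $\kappa(A^\rtimes)'$ and cuts down $\kappa$ to a subrepresentation.

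The engine of the whole argument is the following: for \emph{arbitrary} $\mcH=(\mcH,\pi,U)\in\Rep^\G(A)$, one has $\mcH\lhd L^2(\G)\in\Rep(A^\rtimes)$, where $L^2(\G)$ carries $V$. Indeed $\mcH\lhd L^2(\G)=(\mcH\otimes L^2(\G),\,\pi(\cdot)\otimes 1,\,U_{13}V_{23})$, and Fell's absorption principle (Proposition \ref{Fell absorption}), which gives $U\in\mathscr{L}^\G(U_{13}V_{23},V_{23})$, together with the covariance $(\pi\otimes\id)\alpha(a)=U(\pi(a)\otimes 1)U^*$, shows that $U$ is a unitary isomorphism $\mcH\lhd L^2(\G)\xrightarrow{\ \cong\ }S^\G(\mcH)$ in $\Rep^\G(A)$, with $S^\G(\mcH)$ as in \eqref{notation}. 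Finally $S^\G(\mcH)\in\Rep(A^\rtimes)$ via $\kappa:=(\pi\otimes\id)|_{A^\rtimes}$: one checks $\kappa\circ\alpha=(\pi\otimes\id)\alpha$ and $(\kappa\otimes\id)(V_{23})=V_{23}$, matching \eqref{notation}. By repleteness, $\mcH\lhd L^2(\G)\in\Rep(A^\rtimes)$.

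I would then prove the \emph{second} inclusion first, as it only uses the engine. Given $\mcH\in\Rep^\G(A)$ and $\mcK\in\Rep(L^\infty(\check{\G}))$, the object $L^2(\G)$ generates $\Rep(L^\infty(\check{\G}))$ (Example \ref{ideal property}), so there is an isometry $v\colon\mcK\hookrightarrow\bigoplus_I L^2(\G)$ in $\Rep(L^\infty(\check{\G}))\subseteq\Rep(\G)$. Since $\mcH\lhd-$ is a normal $*$-functor it preserves direct sums, and $1_\mcH\lhd v$ realizes $\mcH\lhd\mcK$ as a subobject of $\bigoplus_I(\mcH\lhd L^2(\G))$ with range projection $1_\mcH\lhd vv^*\in{}_A\mathscr{L}^\G$; the latter lies in $\Rep(A^\rtimes)$ by the engine and closure under direct sums, so closure under subobjects yields $\mcH\lhd\mcK\in\Rep(A^\rtimes)$.

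For the first inclusion, let $\mcH\in\Rep(A^\rtimes)$ and $\mcK\in\Rep(\G)$. The standard object $L^2(A^\rtimes)=S^\G(L^2(A))\cong L^2(A)\lhd L^2(\G)$ has faithful defining representation, hence is a generator of $\Rep(A^\rtimes)$, so $\mcH$ embeds isometrically into $\bigoplus_J L^2(A^\rtimes)$. Using module associativity (treated as identities) and the ideal property $\Rep(L^\infty(\check{\G}))\otop\Rep(\G)\subseteq\Rep(L^\infty(\check{\G}))$ (Example \ref{ideal property}), we get $L^2(A^\rtimes)\lhd\mcK\cong L^2(A)\lhd(L^2(\G)\otop\mcK)$ with $L^2(\G)\otop\mcK\in\Rep(L^\infty(\check{\G}))$, which lies in $\Rep(A^\rtimes)$ by the second inclusion already established. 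Applying $-\lhd\mcK$ to the embedding and invoking closure under direct sums and subobjects gives $\mcH\lhd\mcK\in\Rep(A^\rtimes)$. The real content is concentrated in the engine — the Fell-absorption identification of $\mcH\lhd L^2(\G)$ with the semi-coarse object $S^\G(\mcH)$ — and in the bookkeeping that $\Rep(A^\rtimes)$ is closed under the relevant categorical operations (the subobject case resting on ${}_A\mathscr{L}^\G(\mathcal{N})=\kappa(A^\rtimes)'$); once these are in place, the reduction of an arbitrary $\mcK$ to the generator $L^2(\G)$ (resp.\ of an arbitrary $\mcH$ to $L^2(A^\rtimes)$) is routine.
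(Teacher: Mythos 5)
Your proof is correct, and its engine coincides with the paper's: in both cases Fell's absorption principle (Proposition \ref{Fell absorption}) identifies $\mcH\lhd (L^2(\G),V)$ with the semi-coarse object $S^\G(\mcH)$ of \eqref{notation}, which manifestly carries an $A^\rtimes$-representation, and the general case is reached through the generator property. Where you genuinely diverge is the first inclusion: the paper handles it independently, using the second half of Fell's absorption, $\tilde U=(u_\G\otimes 1)U_{21}^*(u_\G\otimes 1)\in\mathscr{L}^\G(V_{13}U_{23},V_{13})$, to write down an explicit representation $z\mapsto \tilde U_{23}^*((\pi_A\otimes\id)(z)\otimes 1)\tilde U_{23}$ of $A^\rtimes$ on the generator $(L^2(A)\otimes L^2(\G),\pi_A\otimes\id)\lhd(\mcH,U)$, whereas you bootstrap it from the second inclusion via module associativity, $L^2(A^\rtimes)\lhd\mcK\cong L^2(A)\lhd(L^2(\G)\otop\mcK)$, together with the ideal property $\Rep(L^\infty(\check{\G}))\otop\Rep(\G)\subseteq\Rep(L^\infty(\check{\G}))$ of Example \ref{ideal property} --- a legitimate reuse of a fact already on record (which itself encapsulates the same Fell-type computation), at the price of one extra layer of reduction. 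You also spell out the bookkeeping the paper leaves implicit in ``since $\dots$ is a generator'': repleteness, closure under direct sums, and closure under subobjects of the image of $\Rep(A^\rtimes)$ inside $\Rep^\G(A)$, the last resting on ${}_A\mathscr{L}^\G(\mathcal{N})=\kappa(A^\rtimes)'$. Note that this commutant identity is precisely the full faithfulness in Proposition \ref{relation categories} applied with $\mathcal{N}=\mathcal{N}'$ (and, for the semi-coarse object, equation \eqref{semicoarse}), so you could have cited it instead of re-deriving it by slicing the second leg of $V$; your derivation is nonetheless sound. Net effect: your write-up invokes Fell's absorption directly only once and is more explicit about the categorical closure properties, while the paper's is shorter because it constructs the crossed-product representation by hand in both cases.
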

\begin{proof}
    If $(\mcH,U)\in \Rep(\G)$,  Fell's absorption principle (Proposition \ref{Fell absorption}) shows that $$\tilde{U}:= (u_\G\otimes 1)U_{21}^*(u_\G\otimes 1)\in \mathscr{L}^\G(V_{13}U_{23}, V_{13}).$$ We then define the $*$-representation 
    $$A^\rtimes\to B(L^2(A)\otimes L^2(\G)\otimes \mcH): z \mapsto \tilde{U}_{23}^*((\pi_A\otimes \id)(z)\otimes 1)\tilde{U}_{23},$$
   showing that $(L^2(A)\otimes L^2(\G), \pi_A\otimes \id)\lhd (\mcH, U) \in \Rep(A^\rtimes)$.
Since $(L^2(A)\otimes L^2(\G), \pi_A \otimes \id)$ is a generator for $\Rep(A^\rtimes)$, it follows that 
    $\Rep(A^\rtimes)\lhd \Rep(\G)\subseteq \Rep(A^\rtimes).$

    On the other hand, if $(\mcK, \pi_\mcK, U_\mcK)\in \Rep^\G(A)$, defining the $*$-representation
    $$A^\rtimes \to B(\mcK\otimes L^2(\G)): z \mapsto U_\mcK^*  (\pi_\mcK \otimes \id)(z) U_\mcK$$
   shows that $(\mcK, \pi_\mcK, U_\mcK)\lhd (L^2(\G),V)\in \Rep(A^\rtimes)$. Since $(L^2(\G), V)$ is a generator for $\Rep(L^\infty(\check{\G}))$, it follows that $\Rep^\G(A)\lhd \Rep(L^\infty(\check{\G}))\subseteq \Rep(A^\rtimes)$.
\end{proof}

By considering the restriction $\Rep(L^\infty(\check{\G}))\subseteq \Rep(\G)$, we can also view $\Rep^\G(A)$ as a right $\Rep(L^\infty(\check{\G}))$-module $W^*$-category and $\Rep(A^\rtimes)$ as a right $\Rep(L^\infty(\check{\G}))$-module $W^*$-category. In conclusion, to every $\G$-$W^*$-algebra $A$, we have associated four module $W^*$-categories, namely
$$\Rep^\G(A)\curvearrowleft \Rep(\G), \quad \Rep(A^\rtimes)\curvearrowleft \Rep(\G), \quad \Rep^\G(A)\curvearrowleft \Rep(L^\infty(\check{\G})), \quad \Rep(A^\rtimes)\curvearrowleft \Rep(L^\infty(\check{\G})).$$

\subsection{From equivariant correspondence to module functor.}

Fix $\mcG\in \Corr^\G(A,B)$. If $\mcH, \mcH'\in \Rep^\G(B)= \Corr^\G(B, \C)$, we have $\mcG \boxtimes_B \mcH\in \Corr^\G(A, \C)= \Rep^\G(A)$ and if $s\in {}_B\mathscr{L}^\G(\mcH, \mcH')$, then we have the induced map
$$1\boxtimes_B s: \mcG\boxtimes_B \mcH\to \mcG\boxtimes_B \mcH', \quad (1\boxtimes_B s)(y \otimes_B \xi)= y\otimes_B s\xi, \quad y \in \mathscr{L}_B(L^2(B), \mcG), \quad \xi\in \mcH.$$
Then $1\boxtimes_B s \in {}_A\mathscr{L}^\G(\mcG\boxtimes_B \mcH, \mcG\boxtimes_B \mcH')$. In this way, we obtain the normal $*$-functor
$P(\mcG)=F_\mcG: \Rep^\G(B)\to \Rep^\G(A).$
Given $\mcH\in \Rep^\G(A)$ and $\mcK\in \Rep(\G)$, we have unitary isomorphisms
$$S_{\mcG, \mcH,\mcK}: F_\mcG(\mcH\lhd \mcK)= \mcG\boxtimes_B (\mcH \lhd \mcK)\cong (\mcG\boxtimes_B \mcH)\lhd \mcK= F_\mcG(\mcH)\lhd \mcK: y\otimes_B (\xi \otimes \eta)\mapsto (y\otimes_B \xi)\otimes \eta.$$
In this way, we can view $F_\mcG \in \Fun_{\Rep(\G)}(\Rep^\G(B), \Rep^\G(A))$. Given $\mcG, \mcG'\in \Corr^\G(A,B)$, $x\in {}_A\mathscr{L}^\G_B(\mcG, \mcG')$ and $\mcH\in \Rep^\G(B)$, we define
$$P(x)_\mcH = x\boxtimes_B 1: F_\mcG(\mcH)=\mcG\boxtimes_B \mcH\to F_{\mcG'}(\mcH)=\mcG'\boxtimes_B \mcH: y \otimes_B\eta \mapsto  xy\otimes_B \eta.$$
Then $P(x):= \{P(x)_\mcH\}_{\mcH\in \Rep^\G(B)}\in \Nat_{\Rep(\G)}(F_\mcG, F_{\mcG'})$. Combining all this, we find the functor
$$P: \Corr^\G(A,B)\to \Fun_{\Rep(\G)}(\Rep^\G(B), \Rep^\G(A)).$$

If $F\in \Fun_{\Rep(L^\infty(\check{\G}))}(\Rep^\G(B), \Rep^\G(A))$, then $F(\Rep(B^\rtimes))\subseteq \Rep(A^\rtimes)$ (cfr.\ \cite{DR25a}*{Lemma 3.9}). Thus, we have the following list of restriction functors:
\begin{align*}
    (-)_r&: \Fun_{\Rep(\G)}(\Rep^\G(B), \Rep^\G(A))\to \Fun_{\Rep(\G)}(\Rep(B^\rtimes), \Rep(A^\rtimes)), \\
    (-)_m&: \Fun_{\Rep(\G)}(\Rep^\G(B), \Rep^\G(A)) \to \Fun_{\Rep(L^\infty(\check{\G}))}(\Rep^\G(B), \Rep^\G(A)),\\
    (-)_{r,m}&: \Fun_{\Rep(\G)}(\Rep^\G(B), \Rep^\G(A))\to \Fun_{\Rep(L^\infty(\check{\G}))}(\Rep(B^\rtimes), \Rep(A^\rtimes)).
\end{align*}
The $r$-subscript means that we restrict a functor w.r.t.\ the inclusion $\Rep(B^\rtimes)\subseteq \Rep^\G(B)$ and the $m$-subscript means that we restrict the module action w.r.t.\ the inclusion $\Rep(L^\infty(\check{\G}))\subseteq \Rep(\G)$. This leads to the following list of canonical functors:
\begin{align}
   \label{1} P&: \Corr^\G(A,B)\to \Fun_{\Rep(\G)}(\Rep^\G(B), \Rep^\G(A)),\\
    \label{2}P_r = (-)_r\circ P&: \Corr^\G(A,B)\to \Fun_{\Rep(\G)}(\Rep(B^\rtimes), \Rep(A^\rtimes)),\\
    \label{3}P_{m} =(-)_m\circ P&: \Corr^\G(A,B) \to \Fun_{\Rep(L^\infty(\check{\G}))}(\Rep^\G(B), \Rep^\G(A)),\\
    \label{4}P_{r,m} =(-)_{r,m}\circ P&: \Corr^\G(A,B)\to \Fun_{\Rep(L^\infty(\check{\G}))}(\Rep(B^\rtimes), \Rep(A^\rtimes)).
\end{align}

\subsection{From module functor to equivariant correspondence}

One of the main goals of this paper is to show that each of the functors \eqref{1}, \eqref{2}, \eqref{3}, \eqref{4} defines an equivalence of categories. To do this, we will construct appropriate quasi-inverses for each of these functors. A first step in this direction was taken in \cite{DR25a}*{Theorem 3.5, Proposition 3.7}, where the following result was proven:

\begin{Prop}\label{alreadyproven}
   Let $F,G\in \Fun_{\Rep(\G)}(\Rep^\G(B), \Rep^\G(A))$.
    \begin{enumerate}\setlength\itemsep{-0.5em}
        \item Consider $F((L^2(B), \pi_B, U_\beta))=(\mcG_F, \pi_F, U_F)\in \Rep^\G(A)$. There exists a unique unital, normal anti-$*$-representation $\rho_F: B \to B(\mcG_F)$ such that 
$$\rho_F(b)\otimes 1 = U_F S_{L^2(B), L^2(\G)} F((\rho_B\otimes R)\beta(b)) S_{L^2(B), L^2(\G)}^* U_F^*, \quad b \in B.$$
Then $(\mcG_F, \pi_F, \rho_F, U_F)\in \Corr^\G(A,B)$.
\item If $\eta\in \Nat_{\Rep(\G)}(F,G)$, then $\eta_{L^2(B)}\in {}_A\mathscr{L}_B^\G(\mcG_F, \mcG_G).$
\item In this way, we obtain the functor
$$Q: \Fun_{\Rep(\G)}(\Rep^\G(B), \Rep^\G(A))\to \Corr^\G(A,B).$$
\item We have $Q\circ P \cong \id_{\Corr^\G(A,B)}$.
    \end{enumerate} 
\end{Prop}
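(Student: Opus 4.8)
The plan is to take $Q(F):=(\mcG_F,\pi_F,\rho_F,U_F)$ with $(\mcG_F,\pi_F,U_F):=F((L^2(B),\pi_B,U_\beta))$, so that $\mcG_F$, $\pi_F$, $U_F$ and the relation $(\pi_F\otimes\id)\alpha(a)=U_F(\pi_F(a)\otimes 1)U_F^*$ are handed to us for free by $F(L^2(B))\in\Rep^\G(A)$. All the content of the proposition therefore sits in (1): constructing $\rho_F$ and checking the three correspondence axioms. First I would make sense of the defining formula. Since the trivial correspondence satisfies $(\rho_B\otimes R)\beta(b)=U_\beta^*(\rho_B(b)\otimes 1)U_\beta$, and $U_\beta\in\mathscr{L}^\G(U_{\beta,13}V_{23},V_{23})$ by Fell's absorption (Proposition~\ref{Fell absorption}), the $\G$-intertwining of $(\rho_B\otimes R)\beta(b)$ with $U_{\beta,13}V_{23}$ collapses (after conjugating by $U_\beta$) to the trivial fact that $\rho_B(b)\otimes 1$ commutes with $V_{23}$; as $\rho_B(B)$ also commutes with $\pi_B(B)$, this shows $(\rho_B\otimes R)\beta(b)\in{}_B\mathscr{L}^\G(L^2(B)\lhd(L^2(\G),V))$, so that $F$ may be applied to it.

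Next I would apply $F$ and transport. Conjugating $F((\rho_B\otimes R)\beta(b))$ by the module unitary $S_{L^2(B),L^2(\G)}$ and then by $U_F$ (using Fell's absorption once more, now for $U_F\in\mathscr{L}^\G(U_{F,13}V_{23},V_{23})$, to identify $\mcG_F\lhd(L^2(\G),V)\cong S^\G(\mcG_F)$) produces $y:=U_F S_{L^2(B),L^2(\G)}F((\rho_B\otimes R)\beta(b))S_{L^2(B),L^2(\G)}^*U_F^*\in{}_A\mathscr{L}^\G(S^\G(\mcG_F))=(\pi_F\otimes\id)(A^\rtimes)'$ by \eqref{semicoarse}. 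It is cleaner to organise this through the crossed product: $S^\G(L^2(B))$ is exactly the standard representation $L^2(B^\rtimes)$, under which $\rho_B(b)\otimes 1=\rho_{B^\rtimes}(\beta(b))$, while $S^\G(\mcG_F)$ is the $A^\rtimes$-representation $(\pi_F\otimes\id)|_{A^\rtimes}$. By Lemma~\ref{restrictionn} and the ensuing remark that such an $F$ maps $\Rep(B^\rtimes)$ into $\Rep(A^\rtimes)$, and since $L^2(B^\rtimes)$ is a generator, the non-equivariant Eilenberg--Watts theorem (Theorem~\ref{nonequivariant}) endows $F(L^2(B^\rtimes))\cong S^\G(\mcG_F)$ with an $A^\rtimes$-$B^\rtimes$-correspondence structure whose right action $\rho'$ satisfies $\rho'(\beta(b))=y$ and lands in $(\pi_F\otimes\id)(A^\rtimes)'$.

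The main obstacle is to show that $y$ has the special form $\rho_F(b)\otimes 1$ (with $\rho_F(b)\in\pi_F(A)'\subseteq B(\mcG_F)$) and, equivalently, to verify the remaining axiom $(\rho_F\otimes R)\beta(b)=U_F^*(\rho_F(b)\otimes 1)U_F$. Membership in $(\pi_F\otimes\id)(A^\rtimes)'$ alone is too weak: it only forces $y\in B(\mcG_F)\ovot L^\infty(\hat\G)$, so the scalarity of the $L^2(\G)$-leg must come from elsewhere. The extra ingredient is the \emph{full} $\Rep(\G)$-module structure of $F$ (not merely its restriction to $\Rep(L^\infty(\check\G))$, which is all that entered above): naturality of the unitaries $S_{M,X}$ against the ``extra'' generators of $\Rep(\G)$ should promote the $A^\rtimes$-$B^\rtimes$-correspondence $(S^\G(\mcG_F),\pi_F\otimes\id,\rho')$ to a $\check\G$-equivariant one, i.e.\ to an object of $\Corr^{\check\G}(A^\rtimes,B^\rtimes)$ with $\check\G$-representation $\check V_{23}$. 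Recognising it as the crossed-product correspondence $\mcG^\rtimes$ of \cite{DCDR24} and running that construction backwards—a descent that must be justified, e.g.\ via biduality for crossed-product correspondences, giving $\Corr^\G(A,B)\simeq\Corr^{\check\G}(A^\rtimes,B^\rtimes)$—would then yield a unique $\mcG=(\mcG_F,\pi_F,\rho_F,U_F)\in\Corr^\G(A,B)$; since $\rho^\rtimes(\beta(b))=U_F(\rho_F\otimes R)\beta(b)U_F^*$ for a crossed-product correspondence, this simultaneously delivers $\rho'(\beta(b))=\rho_F(b)\otimes 1=y$ and the missing axiom. (Alternatively one can try to force the $\otimes 1$ form directly, replacing the multiplicity via the Fell-absorption isomorphism $\mcK\otop(L^2(\G),V)\cong\mathbb{I}_{\mcK}\otop(L^2(\G),V)$ together with the coherence \eqref{module functor}; I expect the duality route to be the least painful.)

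The remaining items are then routine. That $\rho_F$ is a unital normal anti-$*$-representation follows because $b\mapsto(\rho_B\otimes R)\beta(b)$ is one (as $\rho_B$ and $R$ are anti-$*$-homomorphic and $\beta$ is a $*$-homomorphism) and the operations of applying the normal unital $*$-homomorphism induced by $F$ on the relevant endomorphism algebra and conjugating by the unitaries $S$ and $U_F$ preserve this. For (2), $\eta_{L^2(B)}$ is by hypothesis a morphism in $\Rep^\G(A)$, hence intertwines $\pi_F,\pi_G$ and $U_F,U_G$; that it also intertwines $\rho_F,\rho_G$ is a diagram chase combining naturality of $\eta$ at the morphism $(\rho_B\otimes R)\beta(b)$ with the module-naturality square \eqref{rio}, which relates $\eta_{L^2(B)\lhd(L^2(\G),V)}$ to $\eta_{L^2(B)}\lhd 1$ through the $S$'s, and the defining formula for $\rho_F$. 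Statement (3) is immediate from $Q(\eta)=\eta_{L^2(B)}$. Finally, for (4) the non-equivariant theorem already gives the unitary $\mcG\boxtimes_B L^2(B)\to\mcG,\ y\otimes_B\xi\mapsto y\xi$ intertwining $\pi,\rho$; it remains only to check that this map is $\G$-equivariant (intertwines $U_\boxtimes$ with $U$) and that the right action $\rho_{F_\mcG}$ produced by the construction in (1) coincides with the Connes-fusion action $\rho_\boxtimes$, both of which are direct from the explicit formulas, yielding the natural unitary isomorphism $Q\circ P\cong\id_{\Corr^\G(A,B)}$.
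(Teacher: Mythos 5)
Correct, and essentially the paper's own route: the paper establishes this proposition by citing Theorem 3.5 and Proposition 3.7 of \cite{DR25a}, whose argument --- transporting $(\rho_B\otimes R)\beta(b)$ through $F$ and the module unitaries into ${}_A\mathscr{L}^\G(S^\G(\mcG_F))=(\pi_F\otimes \id)(A^\rtimes)'$, using Fell absorption together with the coherence \eqref{module functor} to pin down the second leg, assembling the $\check{\G}$-equivariant $A^\rtimes$-$B^\rtimes$-correspondence $(\mcG_F\otimes L^2(\G), \pi_F\otimes \id, \tilde{\rho}_F, \check{V}_{23})$, and descending by the biduality lemma (Lemma 3.6 of \cite{DR25a}) --- is exactly the architecture you propose, and it is displayed in detail in the paper's integrable variant, Proposition \ref{integrablequasi}. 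Your handling of parts (2)--(4), via naturality at $(\rho_B\otimes R)\beta(b)$ combined with \eqref{rio} and the canonical unitary $\mcG\boxtimes_B L^2(B)\cong \mcG$, likewise matches the paper's.
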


In \cite{DR25a}*{Theorem 4.5}, it was proven that $Q$ is quasi-inverse to $P$, if $\G$ is a compact quantum group. We will prove that this is also true for general locally compact quantum groups in this paper. The proof in \cite{DR25a} in the compact case makes use of the fact that $\Rep(A^\rtimes)= \Rep^\G(A)$. However, if $\G$ is non-compact, the inclusion $\Rep(A^\rtimes)\subseteq \Rep^\G(A)$ becomes strict, so we will have to do additional work to circumvent the issue.

It turns out that once we construct a quasi-inverse for \eqref{4}, then finding quasi-inverses for \eqref{1}, \eqref{2}, \eqref{3} will follow relatively quickly. Thus, given $F \in \Fun_{\Rep(L^\infty(\check{\G}))}(\Rep(B^\rtimes), \Rep(A^\rtimes))$, we want to find a way to associate a $\G$-$A$-$B$-correspondence to it. If we try to mimique Proposition \ref{alreadyproven}, one issue becomes apparent: if we consider $L^2(B)=(L^2(B), \pi_B, U_\beta)\in \Rep^\G(B)$, it may happen that $L^2(B)\notin \Rep(B^\rtimes)$ (this is already clear by taking $B= \C$ and $\G$ non-compact), so it does not make sense to evaluate the functor $F$ in this object. It follows from \cite{Va01}*{Theorem 5.3} that a $\G$-$W^*$-algebra $(B, \beta)$ is \emph{integrable}, in the sense that the canonical normal, faithful operator-valued weight $T_\beta: B^+ \to \overline{B^\beta}^+$ \cite{Va01}*{Definition 3.1} is semi-finite, exactly when $L^2(B) \in \Rep(B^\rtimes)$. For an integrable $\G$-$W^*$-algebra $(B, \beta)$, we will now prove a version of Proposition \ref{alreadyproven}. Using the equivariant Morita theory, we will then be able to reduce the general case to the integrable case at a later stage.

\begin{Prop}\label{integrablequasi}
    Suppose that $(B, \beta)$ is integrable. Let $F,G\in \Fun_{\Rep(L^\infty(\check{\G}))}(\Rep(B^\rtimes), \Rep(A^\rtimes))$.
    \begin{enumerate}\setlength\itemsep{-0.5em}
        \item Consider $F(L^2(B))=(\mcG_F, \pi_F, U_F)\in \Rep(A^\rtimes)\subseteq \Rep^\G(A)$. There exists a unique unital, normal anti-$*$-representation $\rho_F: B \to B(\mcG_F)$ such that 
$$\rho_F(b)\otimes 1 = U_F S_{L^2(B), L^2(\G)} F((\rho_B\otimes R)\beta(b)) S_{L^2(B), L^2(\G)}^* U_F^*, \quad b \in B.$$
Then $(\mcG_F, \pi_F, \rho_F, U_F)\in \Corr^\G(A,B)$.
\item If $\eta\in \Nat_{\Rep(L^\infty(\check{\G}))}(F,G)$, then $\eta_{L^2(B)}\in {}_A\mathscr{L}_B^\G(\mcG_F, \mcG_G).$
\item In this way, we obtain the functor
$$Q_{r,m}: \Fun_{\Rep(L^\infty(\check{\G}))}(\Rep(B^\rtimes), \Rep(A^\rtimes))\to \Corr^\G(A,B).$$
\item $Q_{r,m}\circ P_{r,m}\cong \id_{\Corr^\G(A,B)}.$
    \end{enumerate}
\end{Prop}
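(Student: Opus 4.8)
The plan is to mirror the structure of the non-equivariant Eilenberg-Watts theorem (Theorem \ref{nonequivariant}) and its already-established equivariant analogue (Proposition \ref{alreadyproven}), exploiting the integrability hypothesis to place $L^2(B)$ inside $\Rep(B^\rtimes)$. Since $(B,\beta)$ is integrable, we have $L^2(B) = (L^2(B), \pi_B, U_\beta) \in \Rep(B^\rtimes)$, so it makes sense to evaluate $F$ at this object; this is precisely the point where integrability is used, and it is what allows the whole argument of Proposition \ref{alreadyproven} to be transplanted here. For part (1), I would first verify that the proposed formula for $\rho_F(b)$ genuinely defines a bounded operator that is $A$-linear and commutes with $\pi_F$. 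The key observation is that $(\rho_B \otimes R)\beta(b) \in {}_B\mathscr{L}^\G(S^\G(L^2(B)))$, equivalently lies in the commutant relevant to the semi-coarse construction \eqref{semicoarse}; one then checks that conjugating $F$ of this morphism by the module-structure unitary $S_{L^2(B), L^2(\G)}$ and by $U_F$ lands in $B(\mcG_F) \otimes 1$, yielding a well-defined $\rho_F(b)$. That $\rho_F$ is a unital normal anti-$*$-homomorphism commuting with $\pi_F$, and that the triple upgrades to an object of $\Corr^\G(A,B)$, should follow by the same computation as in \cite{DR25a}*{Theorem 3.5}, now using that $F$ is only a $\Rep(L^\infty(\check{\G}))$-module functor rather than a full $\Rep(\G)$-module functor --- this is legitimate because $(L^2(\G), V) \in \Rep(L^\infty(\check{\G}))$ and $(\rho_B \otimes R)\beta(b)$ is a $B^\rtimes$-intertwiner of objects in $\Rep(B^\rtimes)$, so all the morphisms and objects involved stay within the smaller module category.

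For part (2), given $\eta \in \Nat_{\Rep(L^\infty(\check{\G}))}(F,G)$, I would show $\eta_{L^2(B)} \in {}_A\mathscr{L}^\G_B(\mcG_F, \mcG_G)$ by the standard naturality computation: $A$-linearity and $\G$-equivariance are immediate from $\eta$ being a $\Rep(L^\infty(\check{\G}))$-module natural transformation (using the compatibility square \eqref{rio} with $X = L^2(\G)$), while compatibility with $\rho_F, \rho_G$ follows from naturality of $\eta$ applied to the morphism $(\rho_B \otimes R)\beta(b)$, exactly as in the displayed computation preceding Theorem \ref{nonequivariant} and in Proposition \ref{alreadyproven}(2). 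Part (3) is then formal: $Q_{r,m}$ sends $F \mapsto (\mcG_F, \pi_F, \rho_F, U_F)$ on objects and $\eta \mapsto \eta_{L^2(B)}$ on morphisms, and functoriality is clear from $\eta \mapsto \eta_{L^2(B)}$ being compatible with composition.

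The substance is in part (4), the natural isomorphism $Q_{r,m} \circ P_{r,m} \cong \id_{\Corr^\G(A,B)}$. For $\mcG \in \Corr^\G(A,B)$, the functor $P_{r,m}(\mcG)$ is $F_\mcG$ restricted to $\Rep(B^\rtimes)$ with module action restricted to $\Rep(L^\infty(\check{\G}))$, so $Q_{r,m}(P_{r,m}(\mcG)) = F_\mcG(L^2(B)) = \mcG \boxtimes_B L^2(B)$ equipped with the anti-representation produced by the formula in part (1). I would exhibit the $A$-linear $\G$-equivariant unitary $\mcG \boxtimes_B L^2(B) \cong \mcG$, $y \otimes_B \xi \mapsto y\xi$, exactly as in the proof of Theorem \ref{nonequivariant}, and then verify that under this identification the reconstructed $\rho_{F_\mcG}$ matches the original $\rho_\mcG$ of $\mcG$. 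This last compatibility check is where I expect the main obstacle to lie: one must trace the formula for $\rho_F$ through the explicit description of the Connes fusion unitary $U_\boxtimes$ and the module-associativity unitary $S_{\mcG, L^2(B), L^2(\G)}$, and confirm that the third structure equation $(\rho \otimes R)(\beta(b)) = U^*(\rho(b)\otimes 1)U$ defining a $\G$-$A$-$B$-correspondence is respected. Naturality of the resulting family of isomorphisms in $\mcG$ is then routine. I would expect this verification to reduce, via the identity $U_\alpha$-type intertwining relations recalled in the preliminaries, to the corresponding computation already carried out in \cite{DR25a}*{Proposition 3.7} for the $\Rep(\G)$-module setting, the only genuine difference being the restriction of scalars to $\Rep(L^\infty(\check{\G}))$, which does not affect the evaluation at $L^2(B)$ since the relevant module morphism $S_{L^2(B), L^2(\G)}$ involves only the object $(L^2(\G), V) \in \Rep(L^\infty(\check{\G}))$.
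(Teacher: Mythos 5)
Your outline of parts (2)--(4) matches the paper's proof, and you correctly identify that integrability enters only to place $L^2(B)$ inside $\Rep(B^\rtimes)$ so that $F(L^2(B))$ makes sense. But in part (1) there is a genuine gap at exactly the point you wave through: you assert that the computation of \cite{DR25a}*{Theorem 3.5} transfers verbatim because ``all the morphisms and objects involved stay within the smaller module category.'' This is false when $\G$ is non-compact. The computation in \cite{DR25a} evaluates the module structure of $F$ at the \emph{trivial} representation $(L^2(\G), \mathbb{I})$, and $(L^2(\G), \mathbb{I})\notin \Rep(L^\infty(\check{\G}))$ unless $\G$ is compact (this is precisely the sense in which the inclusion $\Rep(L^\infty(\check{\G}))\subseteq \Rep(\G)$ is strict in the non-compact case), so the coherence unitaries $S_{M, (L^2(\G),\mathbb{I})}$ simply do not exist as part of the data of a $\Rep(L^\infty(\check{\G}))$-module functor. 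The facts you invoke --- that $(L^2(\G),V)\in \Rep(L^\infty(\check{\G}))$ and that $(\rho_B\otimes R)\beta(b)$ is a $B^\rtimes$-intertwiner --- do not resolve this: the obstruction lies in the \emph{objects} at which the module structure must be evaluated, not in the morphism being transported.

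The actual content of the paper's proof of (1) is the repair of exactly this defect. One tensors an extra factor $(L^2(\G),V)$ on the right, so that objects such as $(L^2(\G),\mathbb{I})\otop (L^2(\G),V)$ land in the ideal $\Rep(L^\infty(\check{\G}))$ (Example \ref{ideal property}); one then introduces the coherence unitaries $S^{2,2}_{V_{23}}$, $S^{1,3}_{V_{14}V_{34}}$, $S^{2,2}_{V_{13}V_{23}}$, $S^{1,3}_{V_{14}V_{24}V_{34}}$, related by \eqref{modulefunctorconsequence}, and uses Fell's absorption principle in the form $U=(u_\G\otimes 1)V_{21}^*(u_\G\otimes 1)\in \mathscr{L}^\G(V_{13}V_{23}, V_{13})$ to show that $S_{L^2(B),L^2(\G)}\, F((\rho_B\otimes R)\beta(b))\, S_{L^2(B),L^2(\G)}^*$ has its second leg in $L^\infty(\G)$. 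Note also that this key computation only yields membership in $B(\mcG_F)\ovot L^\infty(\G)$, not in $B(\mcG_F)\otimes 1$ as your sketch suggests; the passage to an operator of the form $\rho_F(b)\otimes 1$ is then accomplished by assembling the crossed-product correspondence $(\mcG_F\otimes L^2(\G), \pi_F^\rtimes, \tilde{\rho}_F, \check{V}_{23})\in \Corr^{\check{\G}}(A^\rtimes, B^\rtimes)$ and descending via \cite{DR25a}*{Lemma 3.6}, rather than by a direct check. Without this extra-tensor-leg device (or an equivalent substitute), your part (1) does not go through; the remainder of your plan --- the naturality argument for (2), the formal part (3), and the standard identification $\mcG\boxtimes_B L^2(B)\cong \mcG$ with the $\rho$-compatibility check for (4) --- is sound and agrees with the paper.
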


\begin{proof} We will follow the same strategy as in \cite{DR25a}. We give some details, since at one point, the proof has to be adapted because we are dealing with the smaller subcategory $\Rep(L^\infty(\check{\G}))\subseteq \Rep(\G)$.

Recalling the notation introduced in \eqref{notation}, we have that $U_F \in {}_A \mathscr{L}^\G(\mcG_F\lhd L^2(\G), S^\G(\mcG_F))$ is a unitary intertwiner. Consider then the unitary $\tilde{S}: F(S^\G(L^2(B)))\to S^\G(\mcG_F)$ defined by the composition
    $$
\begin{tikzcd}
\tilde{S}: F(S^\G(L^2(B))) \arrow[rr, "F(U_\beta^*)"] &  & F(L^2(B)\lhd L^2(\G)) \arrow[rr, "{S_{L^2(B), L^2(\G)}}"] &  & \mcG_F\lhd L^2(\G) \arrow[rr, "U_F"] &  & S^\G(\mcG_F).
\end{tikzcd}$$
Invoking \eqref{semicoarse}, we have
$\rho_{B^\rtimes}(B^\rtimes)= \pi_{B^\rtimes}(B^\rtimes)'= {}_B\mathscr{L}^\G(S^\G(L^2(B))).$ We then endow the Hilbert space $\mcG_F\otimes L^2(\G)$ with the $A^\rtimes$-$B^\rtimes$-correspondence structure given by:
    \begin{itemize}\setlength\itemsep{-0.5em}
        \item the normal, unital $*$-representation $\pi_F^\rtimes: A^\rtimes \to B(\mcG_F\otimes L^2(\G)): z \mapsto (\pi_F\otimes \id)(z)$,
        \item the normal, unital, anti-$*$-representation $\tilde{\rho}_F: B^\rtimes \to B(\mcG_F\otimes L^2(\G)): z \mapsto \tilde{S}F(\rho_{B^\rtimes}(z))\tilde{S}^*$.
    \end{itemize}
Next, we introduce the following notations:
 \begin{align*}
       S^{2,2}_{V_{23}} &:= S_{L^2(B)\lhd L^2(\G),(L^2(\G), \mathbb{I})\otop (L^2(\G), V)}, &  S_{V_{13}V_{23}}^{2,2}&:= S_{L^2(B)\lhd L^2(\G), (L^2(\G), V)\otop (L^2(\G),V)},\\
       S_{V_{14}V_{34}}^{1,3}&:= S_{L^2(B),(L^2(\G), V)\otop (L^2(\G), \mathbb{I})\otop (L^2(\G),V)}, & S_{V_{14}V_{24}V_{34}}^{1,3}&:= S_{L^2(B),(L^2(\G), V)\otop (L^2(\G), V)\otop (L^2(\G),V)}.
   \end{align*}

   It is important to note that all $\G$-representations occurring in the second factor of the above list of morphisms actually belong to $\Rep(L^\infty(\check{\G}))$, and thus these morphisms make sense.\footnote{Comparing with the proof of \cite{DR25a}*{Theorem 3.5}, this explains why there is an additional tensor factor $L^2(\G)$, since it bypasses the fact that $(L^2(\G), \mathbb{I})\notin \Rep(L^\infty(\check{\G}))$ if $\G$ is not compact.} By the commutativity of the diagram \eqref{module functor}, these morphisms are related by the following relations:
   \begin{equation}\label{modulefunctorconsequence}
      S^{1,3}_{V_{14}V_{34}}S^{2,2,*}_{V_{23}}=S_{L^2(B), L^2(\G)}\otimes 1 \otimes 1= S^{1,3}_{V_{14}V_{24}V_{34}}S^{2,2,*}_{V_{13}V_{23}}.
   \end{equation}

    By Fell's absorption principle (Proposition \ref{Fell absorption}), we have that $U:= (u_\G\otimes 1)V_{21}^*(u_\G\otimes 1)\in \mathscr{L}^\G(V_{13}V_{23},V_{13})$. Then also $U\otimes 1\in \mathscr{L}^\G(V_{14}V_{24}V_{34}, V_{14}V_{34})$. 
We then calculate
\begin{align*}
   &(1\otimes U\otimes 1)(S_{L^2(B), L^2(\G)}F((\rho_B\otimes R)(\beta(b)))S_{L^2(B), L^2(\G)}^*\otimes 1 \otimes 1)(1\otimes U^*\otimes 1)\\
   &= (1\otimes U\otimes 1)S_{V_{14}V_{24}V_{34}}^{1,3} S_{V_{13}V_{23}}^{2,2,*} (F((\rho_B\otimes R)\beta(b))\otimes 1 \otimes 1)S_{V_{13}V_{23}}^{2,2} S_{V_{14}V_{24}V_{34}}^{1,3,*} (1\otimes U^* \otimes 1)\\
   &= (1\otimes U \otimes 1) S_{V_{14}V_{24}V_{34}}^{1,3} F((\rho_B\otimes R)\beta(b)\otimes 1 \otimes 1) S_{V_{14}V_{24}V_{34}}^{1,3,*} (1\otimes U^* \otimes 1)\\
   &= S_{V_{14}V_{34}}^{1,3} F(1\otimes U\otimes 1) F((\rho_B\otimes R)\beta(b)\otimes 1 \otimes 1) F(1\otimes U^*\otimes 1) S_{V_{14}V_{34}}^{1,3,*}\\
   &= S_{V_{14}V_{34}}^{1,3} F((\rho_B\otimes R)\beta(b)\otimes 1 \otimes 1)S_{V_{14}V_{34}}^{1,3,*}\\
   &= S_{V_{14}V_{34}}^{1,3}S_{V_{23}}^{2,2,*} (F((\rho_B\otimes R)\beta(b))\otimes 1 \otimes 1) S_{V_{23}}^{2,2} S_{V_{14}V_{34}}^{1,3,*}\\
   &= S_{L^2(B), L^2(\G)} F((\rho_B\otimes R)\beta(b)) S_{L^2(B), L^2(\G)}^*\otimes 1 \otimes 1.
\end{align*}
Here, the first equality follows from \eqref{modulefunctorconsequence}, the second equality follows from naturality of $\{S_{\mH, \mcK}\}$ in the first variable, the third equality follows from naturality of $\{S_{\mH, \mcK}\}$ in the second variable, the fourth equality follows from the fact that $U\in L^\infty(\G)'\ovot B(L^2(\G))$, the fifth equality follows from naturality of $\{S_{\mH, \mcK}\}$ in the first variable and the sixth equality follows from \eqref{modulefunctorconsequence}. Since the first leg of $U$ generates the von Neumann algebra $L^\infty(\G)'$, it follows from the above computation that $$S_{L^2(B), L^2(\G)} F((\rho_B\otimes R)(\beta(b))) S_{L^2(B), L^2(\G)}^*\in B(\mcG_F)\ovot L^\infty(\G).$$
Exactly as in the proof of \cite{DR25a}*{Theorem 3.5}, we then see that $(\mcG_F\otimes L^2(\G), \pi_F^\rtimes, \tilde{\rho}_F, \check{V}_{23})\in \Corr^{\check{\G}}(A^\rtimes, B^\rtimes)$ and $\tilde{\rho}_F(1\otimes \check{x})= U_F(1\otimes \rho_{\check{\G}}(\check{x}))U_F$ for all $\check{x}\in L^\infty(\check{\G})$. By \cite{DR25a}*{Lemma 3.6}, it follows that there exists a unique unital, normal, anti-$*$-representation $\rho_F: B\to B(\mcG_F)$ such that $\tilde{\rho}_F= \rho_F^\rtimes$ and $(\mcG_F, \pi_F, \rho_F, U_F)\in \Corr^\G(A,B)$. Thus, $(1)$ is proven. The proof of $(2)$ follows by the same computation as in \cite{DR25a}*{Proposition 3.7}. The statement $(3)$ follows immediately by $(1)$ and $(2)$, and $(4)$ follows from standard calculations.
\end{proof}

\subsection{Equivariant Eilenberg-Watts theorem}

\begin{Prop}\label{integrablefullyfaithful}
   Suppose that $(B, \beta)$ is integrable. The functor
    $$Q_{r,m}: \Fun_{\Rep(L^\infty(\check{\G}))}(\Rep(B^\rtimes), \Rep(A^\rtimes))\to \Corr^\G(A,B)$$
    is fully faithful. 
\end{Prop}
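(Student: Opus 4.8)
The plan is to deduce the statement from Rieffel's bijection \eqref{Rieffelbijection}, applied at a carefully chosen generator of $\Rep(B^\rtimes)$, after observing that full faithfulness of $Q_{r,m}$ is exactly the assertion that for all $F,G\in \Fun_{\Rep(L^\infty(\check{\G}))}(\Rep(B^\rtimes),\Rep(A^\rtimes))$ the map
$$\Nat_{\Rep(L^\infty(\check{\G}))}(F,G)\to {}_A\mathscr{L}_B^\G(\mcG_F,\mcG_G):\eta\mapsto \eta_{L^2(B)},$$
which is well defined by Proposition \ref{integrablequasi}(2), is a bijection. The central object is $\mcH_0:=L^2(B)\lhd (L^2(\G),V)\in \Rep(B^\rtimes)$, which exists because integrability gives $L^2(B)\in \Rep(B^\rtimes)$ and Lemma \ref{restrictionn} gives $\Rep(B^\rtimes)\lhd \Rep(L^\infty(\check{\G}))\subseteq \Rep(B^\rtimes)$. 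Through the unitary implementation $U_\beta\in {}_B\mathscr{L}^\G(L^2(B)\lhd L^2(\G),S^\G(L^2(B)))$ one identifies $\mcH_0\cong S^\G(L^2(B))$, whose underlying $B^\rtimes$-representation is the standard representation $\pi_B\otimes \id$ on $L^2(B)\otimes L^2(\G)$; hence $\mcH_0$ is a generator, and by \eqref{semicoarse} its endomorphism algebra is ${}_{B^\rtimes}\mathscr{L}(\mcH_0)=(\pi_B\otimes\id)(B^\rtimes)'=\rho_{B^\rtimes}(B^\rtimes)$.

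For injectivity I would apply the module naturality \eqref{rio} with $M=L^2(B)$ and $X=(L^2(\G),V)\in \Rep(L^\infty(\check{\G}))$, which expresses $\eta_{\mcH_0}$ as $(S^G_{L^2(B),X})^{-1}(\eta_{L^2(B)}\lhd 1)S^F_{L^2(B),X}$. Thus $\eta_{L^2(B)}=0$ forces $\eta_{\mcH_0}=0$, and since $\mcH_0$ is a generator, injectivity of \eqref{Rieffelbijection} gives $\eta=0$.

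For surjectivity, given $x_0\in {}_A\mathscr{L}_B^\G(\mcG_F,\mcG_G)$ (so in particular $x_0\in {}_{A^\rtimes}\mathscr{L}(\mcG_F,\mcG_G)$ by Proposition \ref{relation categories}, and $x_0\rho_F(b)=\rho_G(b)x_0$), I would set
$$x:=(S^G_{L^2(B),X})^{-1}(x_0\lhd 1)\,S^F_{L^2(B),X}\in {}_{A^\rtimes}\mathscr{L}(F(\mcH_0),G(\mcH_0))$$
and verify the commutation condition $xF(y)=G(y)x$ of \eqref{Rieffelbijection} for $y$ running over a generating set of ${}_{B^\rtimes}\mathscr{L}(\mcH_0)=\rho_{B^\rtimes}(B^\rtimes)$. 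Transported to the $\lhd$-picture through $U_\beta$, this algebra is generated by the two families $1_{L^2(B)}\lhd \rho_{\check{\G}}(\check{y})$, with $\rho_{\check{\G}}(\check{y})\in \mathscr{L}^\G((L^2(\G),V))$ since $\rho_{\check{\G}}(\check{y})$ lies in the commutant of the first leg of $V$, and $(\rho_B\otimes R)\beta(b)=U_\beta^*(\rho_B(b)\otimes 1)U_\beta$. For the first family the equality $xF(y)=G(y)x$ is automatic from the naturality of the coherence maps $S^F,S^G$ in the second variable, since $F(1\lhd z)=(S^F)^{-1}(1\lhd z)S^F$ for an endomorphism $z$ of $X$ in $\Rep(L^\infty(\check{\G}))$; this is where the module structure over $\Rep(L^\infty(\check{\G}))$ enters. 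For the second family, rewriting $F((\rho_B\otimes R)\beta(b))$ and $G((\rho_B\otimes R)\beta(b))$ by means of the defining formula for $\rho_F,\rho_G$ in Proposition \ref{integrablequasi}(1), and using that $x_0$ intertwines the $\G$-representations $U_F,U_G$, reduces the desired identity exactly to $x_0\rho_F(b)=\rho_G(b)x_0$. By \eqref{Rieffelbijection} this yields a plain natural transformation $\eta\in \Nat(F,G)$ with $\eta_{\mcH_0}=x$.

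The main obstacle is to promote this $\eta$ to a \emph{module} natural transformation and to confirm $\eta_{L^2(B)}=x_0$. For module naturality I would verify \eqref{rio} by a uniqueness argument: for each fixed $Y\in \Rep(L^\infty(\check{\G}))$ the two composites in \eqref{rio}, regarded as natural transformations $F(-\lhd Y)\Rightarrow G(-)\lhd Y$ between normal $*$-functors $\Rep(B^\rtimes)\to \Rep(A^\rtimes)$, coincide once they agree at the generator $\mcH_0$, and this agreement follows from the construction of $x$ together with the associativity $\mcH_0\lhd Y=L^2(B)\lhd(X\otop Y)$ and the ideal property of Example \ref{ideal property} (so that $X\otop Y\in \Rep(L^\infty(\check{\G}))$). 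Once \eqref{rio} holds, it forces $\eta_{\mcH_0}=(S^G)^{-1}(\eta_{L^2(B)}\lhd 1)S^F$, which combined with $\eta_{\mcH_0}=x=(S^G)^{-1}(x_0\lhd 1)S^F$ gives $\eta_{L^2(B)}\otimes 1=x_0\otimes 1$, hence $\eta_{L^2(B)}=x_0$. I expect the delicate part to be precisely this module-naturality bookkeeping, tracking the interplay of $U_\beta$, the $\G$-representations $U_F,U_G$, and the coherence maps $S^F,S^G$.
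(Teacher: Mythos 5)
Your injectivity argument and your construction of the candidate at the generator are correct and essentially coincide with the paper's: the paper also sets $\eta_2:=S_{L^2(B),L^2(\G)}^*(\eta_1\otimes 1)S_{L^2(B),L^2(\G)}$, and the commutation $\eta_2F((\rho_B\otimes R)\beta(b))=G((\rho_B\otimes R)\beta(b))\eta_2$ does reduce, exactly as you say, to the defining formula for $\rho_F,\rho_G$ from Proposition \ref{integrablequasi}(1) together with the intertwining properties of $\eta_1$; the commutation with $F(1\lhd \rho_{\check{\G}}(\check{y}))$ via naturality of the coherences is likewise fine. So \eqref{Rieffelbijection} at the generator $\mcH_0=L^2(B)\lhd L^2(\G)$ does produce a plain $\eta\in\Nat(F,G)$ with $\eta_{\mcH_0}=\eta_2$.

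The genuine gap is in the promotion of $\eta$ to $\Nat_{\Rep(L^\infty(\check{\G}))}(F,G)$. Your uniqueness scheme — fix $Y$, regard both composites of \eqref{rio} as natural transformations $F(-\lhd Y)\Rightarrow G(-)\lhd Y$, and check agreement at $\mcH_0$ — requires the identity $S^G_{\mcH_0,Y}\,\eta_{\mcH_0\lhd Y}=(\eta_{\mcH_0}\lhd 1)\,S^F_{\mcH_0,Y}$, and this does \emph{not} "follow from the construction of $x$": the value $\eta_{\mcH_0\lhd Y}$ is only determined abstractly by Rieffel uniqueness, and nothing established so far relates it to the coherence conjugate of $\eta_2$. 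If you try to reduce in the $Y$-variable to the generator $L^2(\G)$ of $\Rep(L^\infty(\check{\G}))$, you land precisely on the unproven identity $\eta_{L^2(B)\lhd(L^2(\G)\otop L^2(\G))}=S^{G,*}(\eta_2\otimes 1)S^F$ at the two-tensor-factor object, which is circular. Naturality against morphisms of the form $1\lhd z$ with $z\in\mathscr{L}^\G(V\otop V)$ only constrains the extra legs of the relevant operator to lie in $\check{\Delta}(L^\infty(\check{\G}))$, not to be scalar; and the Fell isometry $T:(L^2(\G),V)\to(L^2(\G),V)\otop(L^2(\G),V)$ pins the operator down only on the range of $1\otimes T$. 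This is exactly where the paper must do real work: it \emph{defines} $\eta_3:=S^*(\eta_2\otimes 1)S$ at $L^2(B)\lhd(L^2(\G)\otop L^2(\G))$ and proves the Rieffel commutation \eqref{commutations} there, whose essential content is the lengthy computation \eqref{comm3} using the Fell intertwiner $V^*\Sigma\in\mathscr{L}^\G(V_{13},V_{13}V_{23})$ and the extra tensor leg (needed since $(L^2(\G),\mathbb{I})\notin\Rep(L^\infty(\check{\G}))$ for non-compact $\G$); only then does it descend via $T$ to $\eta_{\mcH_0}=\eta_2$ and propagate the module naturality \eqref{rectangleco} by two injectivity arguments. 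Your proposal defers this computation to the "module-naturality bookkeeping" and then omits it; as written the surjectivity half is incomplete, and filling the hole along your lines would reproduce the paper's two-factor argument.
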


\begin{proof}

Fix $F,G \in \Fun_{\Rep(L^\infty(\check{\G}))}(\Rep(B^\rtimes), \Rep(A^\rtimes))$. We will prove that the map
\begin{equation}\label{biject}
    \Nat_{\Rep(L^\infty(\check{\G}))}(F,G)\to {}_A\mathscr{L}^\G_B(\mcG_F, \mcG_G): \eta \mapsto \eta_{L^2(B)}
\end{equation}
is bijective. The argument presented here follows the strategy of the proof of \cite{DR25a}*{Proposition 4.4}, but we prefer to include most details for the convenience of the reader, since we will have to make small modifications.

If $\eta,\theta \in \operatorname{Nat}_{\Rep(L^\infty(\check{\G}))}(F,G)$ with $\eta_{L^2(B)}= \theta_{L^2(B)}$, then $\eta_{L^2(B)\lhd L^2(\G)}= \theta_{L^2(B)\lhd L^2(\G)}$. Since $L^2(B)\lhd L^2(\G)$ generates $\Rep(B^\rtimes)$, it follows from the injectivity of \eqref{Rieffelbijection} that $\eta = \theta$. Thus, the map \eqref{biject} is injective. We now show that it is surjective as well.

 Given $\eta_1\in {}_A\mathscr{L}^\G_B(\mcG_F, \mcG_G)$, we must prove that there exists $\eta\in \Nat_{\Rep(L^\infty(\check{\G}))}(F,G)$ such that $\eta_{L^2(B)}= \eta_1$. We define
   \begin{align*}
       &\eta_2: = S_{L^2(B), L^2(\G)}^*  (\eta_1\otimes 1) S_{L^2(B), L^2(\G)},\\
    &  \eta_3:= S_{L^2(B)\lhd L^2(\G), L^2(\G)}^*(\eta_2\otimes 1)S_{L^2(B)\lhd L^2(\G), L^2(\G)}= S_{L^2(B), L^2(\G)\otop L^2(\G)}^*(\eta_1\otimes 1 \otimes 1)S_{L^2(B), L^2(\G)\otop L^2(\G)}.
   \end{align*}

Exactly as in the proof of \cite{DR25a}*{Proposition 4.4}, we then see that
$$\eta_2 F((\rho_B\otimes R)\beta(b)) = G((\rho_B\otimes R)\beta(b))\eta_2, \quad b\in B.$$

We now claim that
\begin{equation}\label{commutations}
    \eta_3 F(x)= G(x) \eta_3, \quad x \in {}_{B^\rtimes}\mathscr{L}(L^2(B)\lhd (L^2(\G)\otop L^2(\G))), 
\end{equation}
which as in \cite{DR25a}*{Proposition 4.4} boils down to showing the commutation relation
\begin{equation}\label{comm3}
    \eta_3 F(V_{23}^* U_{\beta,13}^*(\rho_B(b)\otimes 1\otimes 1)U_{\beta,13}V_{23}) = G(V_{23}^* U_{\beta,13}^*(\rho_B(b)\otimes 1\otimes 1)U_{\beta,13}V_{23}) \eta_3, \quad b \in B.
\end{equation}
The proof of this commutation relation in  \cite{DR25a}*{Proposition 4.4} no longer works in the non-compact case, since $(L^2(\G), \mathbb{I})\notin \Rep(L^\infty(\check{\G}))$. To remedy this, we introduce an extra tensor factor, although for the rest the argument is mostly the same, but we prefer to include the lengthy calculation for the convenience of the reader.

As before, we will simplify some notation. Therefore, we write 
\begin{align*}
    S_V^{1,1}&:= S_{L^2(B), (L^2(\G),V)}, & 
    S_{V_{13}V_{23}}^{1,2}&:= S_{L^2(B), (L^2(\G),V)\otop (L^2(\G),V)},\\
      S_{V_{23}}^{2,2}&:= S_{L^2(B)\lhd (L^2(\G),V), (L^2(\G), \mathbb{I})\otop (L^2(\G),V)},&
    S_{V_{14}V_{34}}^{1,3}&:= S_{L^2(B), (L^2(\G),V)\otop (L^2(\G),\mathbb{I})\otop (L^2(\G),V)},\\
   S_{V_{14}V_{24}V_{34}}^{1,3}&:= S_{L^2(B), (L^2(\G),V)\otop (L^2(\G),V)\otop (L^2(\G),V)}, & S_{V}^{3,1}&:= S_{L^2(B)\lhd ((L^2(\G),V)\otop (L^2(\G),V)), (L^2(\G),V)}
  .
\end{align*}

By Fell's absorption principle (Proposition \ref{Fell absorption}), we have $$V^* \Sigma \in \mathscr{L}^\G((L^2(\G)\otimes L^2(\G), V_{13}), (L^2(\G)\otimes L^2(\G), V_{13}V_{23})).$$
We then compute 
\begin{align*}
    &(\eta_3\otimes 1)(F(V_{23}^*U_{\beta, 13}^*(\rho_B(b)\otimes 1^{\otimes 2})U_{\beta, 13} V_{23})\otimes 1)\\
    &=(S_{V_{13}V_{23}}^{1,2,*}\otimes 1)(\eta_1\otimes 1^{\otimes 3}) (S_{V_{13}V_{23}}^{1,2}\otimes 1) (F(V_{23}^*U_{\beta, 13}^*(\rho_B(b)\otimes 1^{\otimes 2})U_{\beta, 13} V_{23})\otimes 1)\\
    &=(S_{V_{13}V_{23}}^{1,2,*}\otimes 1)(\eta_1\otimes 1^{\otimes 3}) S_{V_{14}V_{24}V_{34}}^{1,3} S_{V}^{3,1,*} (F(V_{23}^*U_{\beta, 13}^*(\rho_B(b)\otimes 1^{\otimes 2})U_{\beta, 13} V_{23})\otimes 1)\\
    &= (S_{V_{13}V_{23}}^{1,2,*}\otimes 1)(\eta_1\otimes 1^{\otimes 3})S_{V_{14}V_{24}V_{34}}^{1,3} F(V_{23}^*U_{\beta, 13}^*(\rho_B(b)\otimes 1^{\otimes 3})U_{\beta, 13}V_{23}) S_V^{3,1,*}\\
    &= (S_{V_{13}V_{23}}^{1,2,*}\otimes 1)(\eta_1\otimes 1^{\otimes 3})S_{V_{14}V_{24}V_{34}}^{1,3} F((V^*\Sigma)_{23}) F(U_\beta^*(\rho_B(b)\otimes 1)U_\beta \otimes 1^{\otimes 2}) F((\Sigma V)_{23}) S_{V}^{3,1,*}\\
    &= (S_{V_{13}V_{23}}^{1,2,*}\otimes 1)(\eta_1\otimes 1^{\otimes 3})(V^*\Sigma)_{23} S_{V_{14}V_{34}}^{1,3} F(U_\beta^*(\rho_B(b)\otimes 1)U_\beta \otimes 1^{\otimes 2}) F((\Sigma V)_{23}) S_{V}^{3,1,*}\\
    &= (S_{V_{13}V_{23}}^{1,2,*}\otimes 1)(V^*\Sigma)_{23}(\eta_1\otimes 1^{\otimes 3}) S_{V_{14}V_{34}}^{1,3} F(U_\beta^*(\rho_B(b)\otimes 1)U_\beta \otimes 1^{\otimes 2}) F((\Sigma V)_{23}) S_{V}^{3,1,*}\\
    &= (S_{V_{13}V_{23}}^{1,2,*}\otimes 1)(V^*\Sigma)_{23}(S_{V}^{1,1}\otimes 1^{\otimes 2}) (\eta_2\otimes 1^{\otimes 2})(S_V^{1,1,*}\otimes 1^{\otimes2})S_{V_{14}V_{34}}^{1,3} F(U_\beta^*(\rho_B(b)\otimes 1)U_\beta \otimes 1^{\otimes 2}) F((\Sigma V)_{23}) S_{V}^{3,1,*}\\
    &=   (S_{V_{13}V_{23}}^{1,2,*}\otimes 1)(V^*\Sigma)_{23}(S_{V}^{1,1}\otimes 1^{\otimes 2}) (\eta_2\otimes 1^{\otimes 2})S^{2,2}_{V_{23}}F(U_\beta^*(\rho_B(b)\otimes 1)U_\beta \otimes 1^{\otimes 2}) F((\Sigma V)_{23}) S_{V}^{3,1,*}\\
    &=   (S_{V_{13}V_{23}}^{1,2,*}\otimes 1)(V^*\Sigma)_{23}(S_{V}^{1,1}\otimes 1^{\otimes 2}) (\eta_2\otimes 1^{\otimes 2})(F(U_\beta^*(\rho_B(b)\otimes 1)U_\beta) \otimes 1^{\otimes 2}) S^{2,2}_{V_{23}}F((\Sigma V)_{23}) S_{V}^{3,1,*}\\
    &=  (S_{V_{13}V_{23}}^{1,2,*}\otimes 1)(V^*\Sigma)_{23}(S_{V}^{1,1}\otimes 1^{\otimes 2}) (G(U_\beta^*(\rho_B(b)\otimes 1)U_\beta) \otimes 1^{\otimes 2})(\eta_2\otimes 1^{\otimes 2}) S^{2,2}_{V_{23}}F((\Sigma V)_{23}) S_{V}^{3,1,*}\\
    &= S_V^{3,1} S_{V_{14} V_{24}V_{34}}^{1,3,*}(V^*\Sigma)_{23}(S_{V}^{1,1}\otimes 1^{\otimes 2}) (G(U_\beta^*(\rho_B(b)\otimes 1)U_\beta) \otimes 1^{\otimes 2})(\eta_2\otimes 1^{\otimes 2}) S_{V_{23}}^{2,2}F((\Sigma V)_{23}) S_V^{3,1,*}\\
    &= S_V^{3,1} G((V^*\Sigma)_{23})S_{V_{14} V_{34}}^{1,3,*} (S_{V}^{1,1}\otimes 1^{\otimes 2}) (G(U_\beta^*(\rho_B(b)\otimes 1)U_\beta) \otimes 1^{\otimes 2})(\eta_2\otimes 1^{\otimes 2}) S_{V_{23}}^{2,2}F((\Sigma V)_{23}) S_V^{3,1,*}\\
    &= S_V^{3,1} G((V^*\Sigma)_{23})S_{V_{23}}^{2,2,*} (G(U_\beta^*(\rho_B(b)\otimes 1)U_\beta) \otimes 1^{\otimes 2})(\eta_2\otimes 1^{\otimes 2}) S_{V_{23}}^{2,2}F((\Sigma V)_{23}) S_V^{3,1,*}\\
    &= S_V^{3,1} G((V^*\Sigma)_{23}) G(U_\beta^*(\rho_B(b)\otimes 1)U_\beta \otimes 1^{\otimes 2})S_{V_{23}}^{2,2,*}(\eta_2\otimes 1^{\otimes 2}) S_{V_{23}}^{2,2}F((\Sigma V)_{23}) S_V^{3,1,*}\\   
    &=S_V^{3,1} G((V^*\Sigma)_{23}) G(U_\beta^*(\rho_B(b)\otimes 1)U_\beta \otimes 1^{\otimes 2})S_{V_{23}}^{2,2,*}(S_V^{1,1,*}\otimes 1^{\otimes 2})(\eta_1\otimes 1^{\otimes 3})(S_V^{1,1}\otimes 1^{\otimes 2}) S_{V_{23}}^{2,2}F((\Sigma V)_{23}) S_V^{3,1,*}\\    
    &=S_V^{3,1} G((V^*\Sigma)_{23}) G(U_\beta^*(\rho_B(b)\otimes 1)U_\beta \otimes 1^{\otimes 2})S_{V_{14}V_{34}}^{1,3,*}(\eta_1\otimes 1^{\otimes 3})S_{V_{14}V_{34}}^{1,3}F((\Sigma V)_{23}) S_V^{3,1,*}\\    
    &=S_V^{3,1} G((V^*\Sigma)_{23}) G(U_\beta^*(\rho_B(b)\otimes 1)U_\beta \otimes 1^{\otimes 2})S_{V_{14}V_{34}}^{1,3,*}(\eta_1\otimes 1^{\otimes 3})(\Sigma V)_{23} S_{V_{14}V_{24}V_{34}}^{1,3} S_V^{3,1,*}\\    
    &=S_V^{3,1} G((V^*\Sigma)_{23}) G(U_\beta^*(\rho_B(b)\otimes 1)U_\beta \otimes 1^{\otimes 2})S_{V_{14}V_{34}}^{1,3,*} (\Sigma V)_{23}(\eta_1\otimes 1^{\otimes 3}) S_{V_{14}V_{24}V_{34}}^{1,3} S_V^{3,1,*}\\    
    &=S_V^{3,1} G((V^*\Sigma)_{23}) G(U_\beta^*(\rho_B(b)\otimes 1)U_\beta \otimes 1^{\otimes 2})G((\Sigma V)_{23}) S_{V_{14}V_{24}V_{34}}^{1,3,*}(\eta_1\otimes 1^{\otimes 3}) S_{V_{14}V_{24}V_{34}}^{1,3} S_V^{3,1,*}\\    
    &=S_V^{3,1} G((V^*\Sigma)_{23}) G(U_\beta^*(\rho_B(b)\otimes 1)U_\beta \otimes 1^{\otimes 2})G((\Sigma V)_{23}) S_V^{3,1,*}(S_{V_{13}V_{23}}^{1,2,*}\otimes 1)(\eta_1\otimes 1^{\otimes 3}) (S_{V_{13}V_{23}}^{1,2}\otimes 1)\\ 
    &= S_V^{3,1} G(V_{23}^* U_{\beta, 13}^* (\rho_B(b)\otimes 1^{\otimes 3})U_{\beta, 13}V_{23}) S_{V}^{3,1,*} (\eta_3\otimes 1)\\
    &= (G(V_{23}^*U_{\beta, 13}^*(\rho_B(b)\otimes 1^{\otimes 2})U_{\beta, 13}V_{23})\otimes 1) (\eta_3\otimes 1),
\end{align*}
and \eqref{comm3} follows.

Since $L^2(B)\lhd (L^2(\G)\otop L^2(\G))$ is a generator for $\Rep(B^\rtimes)$, it follows from \eqref{commutations} and the bijectivity of \eqref{Rieffelbijection} that there exists a unique natural transformation $\eta\in \operatorname{Nat}(F,G)$ such that $\eta_3=\eta_{L^2(B)\lhd (L^2(\G)\otop L^2(\G))}$. 

By Fell's absorption principle (Proposition \ref{Fell absorption}), we may pick an isometric intertwiner 
$$T: (L^2(\G),V)\to (L^2(\G),V)\otop (L^2(\G),V).$$
The calculation
\begin{align*}
    G(1\otimes T) \eta_{L^2(B)\lhd L^2(\G)} &= \eta_{L^2(B)\lhd (L^2(\G)\otop L^2(\G))} F(1\otimes T)\\
    &= S_{L^2(B), L^2(\G)\otop L^2(\G)}^* (\eta_1\otimes 1\otimes 1)S_{L^2(B), L^2(\G)\otop L^2(\G)} F(1\otimes T)\\
    &= S_{L^2(B), L^2(\G)\otop L^2(\G)}^* (1\otimes T) (\eta_1\otimes 1) S_{L^2(B), L^2(\G)}\\
    &= G(1\otimes T) S_{L^2(B), L^2(\G)}^*(\eta_1\otimes 1)S_{L^2(B), L^2(\G)} = G(1\otimes T) \eta_2,
\end{align*}
combined with the fact that $T$ is isometric ensures that $\eta_{L^2(B)\lhd L^2(\G)}= \eta_2$.

 Given $\mcH \in \Rep(B^\rtimes)$ and $\mcK\in \Rep(L^\infty(\check{\G}))$, we now prove that the rectangle 
\begin{equation}\label{rectangleco}
    \begin{tikzcd}
F(\mcH\lhd \mcK) \arrow[rr, "\eta_{\mcH\lhd \mcK}"'] \arrow[d, "{S_{\mcH, \mcK}}"'] &  & G(\mcH\lhd \mcK) \arrow[d, "{S_{\mcH, \mcK}}"] \\
F(\mcH)\lhd \mcK \arrow[rr, "\eta_{\mcH}\otimes 1"]                                    &  & G(\mcH)\lhd \mcK                              
\end{tikzcd}
\end{equation}
commutes.
By construction, \eqref{rectangleco} commutes if $\mathcal{H}= L^2(B)\lhd L^2(\G)$ and $\mathcal{K}= L^2(\G)\in \Rep(L^\infty(\check{\G}))$. Both compositions $F(\mcH\lhd L^2(\G))\to G(\mcH)\lhd L^2(\G)$ in \eqref{rectangleco} define natural transformations $F(-\lhd L^2(\G))\implies G(-)\lhd L^2(\G)$, and they agree in the generator $L^2(B)\lhd L^2(\G)\in\Rep(B^\rtimes)$. By the injectivity of \eqref{Rieffelbijection}, it thus follows that \eqref{rectangleco} commutes when $\mcH \in \Rep(B^\rtimes)$ is arbitrary and $\mcK= L^2(\G)$. Similarly, fixing $\mcH \in \Rep(B^\rtimes)$, we obtain natural transformations $F(\mcH \lhd -)\implies G(\mcH)\lhd -$ that agree on the generator $L^2(\G)\in \Rep(L^\infty(\check{\G}))$, so that by the injectivity of \eqref{Rieffelbijection} we find that \eqref{rectangleco} commutes for all $\mcH \in \Rep(B^\rtimes)$ and all $\mcK\in \Rep(L^\infty(\check{\G}))$.

In conclusion, we find that $\eta \in \Nat_{\Rep(L^\infty(\check{\G}))}(F,G)$. By the definition of $\eta_2$, we then also find $\eta_{L^2(B)}= \eta_1$, and the surjectivity of \eqref{biject} is proven.
\end{proof}

We emphasize that in the following results, integrability of $(B, \beta)$ no longer plays a role.

\begin{Theorem}\label{restrictionsisos} The restriction functors
\begin{align}
    \label{r1}&\Fun_{\Rep(\G)}(\Rep^\G(B), \Rep^\G(A))\to \Fun_{\Rep(L^\infty(\check{\G}))}(\Rep^\G(B), \Rep^\G(A))\\
    \label{r2}&\Fun_{\Rep(\G)}(\Rep(B^\rtimes), \Rep(A^\rtimes))\to \Fun_{\Rep(L^\infty(\check{\G}))}(\Rep(B^\rtimes), \Rep(A^\rtimes))
\end{align}
are isomorphisms of categories.
\end{Theorem}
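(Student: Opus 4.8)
The plan is to show that each of \eqref{r1} and \eqref{r2} is bijective on objects and on every morphism space, since an isomorphism of categories is exactly a functor with these two properties. The decisive structural observation is that in all of these module categories the action $\lhd$ is Hilbert-space tensoring and the coherence isomorphisms $\phi$ are identities, so a module-functor structure is nothing more than a natural family of unitaries $\{S_{M,X}\}$ satisfying the pentagon \eqref{module functor}, and the restriction functors simply forget those $S_{M,X}$ with $X\notin\Rep(L^\infty(\check{\G}))$. I will treat both statements in parallel; for \eqref{r2} I use $\Rep(B^\rtimes)\lhd\Rep(\G)\subseteq\Rep(B^\rtimes)$ (Lemma~\ref{restrictionn}) to keep $M\lhd\mcK$ inside $\Rep(B^\rtimes)$, and in both cases $M\lhd(\mcK\otop L^2(\G))=(M\lhd\mcK)\lhd L^2(\G)$ lands in $\Rep(B^\rtimes)$ (again Lemma~\ref{restrictionn}, as $\mcK\otop L^2(\G)\in\Rep(L^\infty(\check{\G}))$ by the ideal property, Example~\ref{ideal property}), so that $F$ of it lies in $\Rep(A^\rtimes)$. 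The engine throughout is precisely this ideal property together with Fell's absorption (Proposition~\ref{Fell absorption}) and the bijection \eqref{Rieffelbijection}.

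For uniqueness on objects I note that the pentagon \eqref{module functor} applied to $X=\mcK\in\Rep(\G)$ and $Y=L^2(\G)=(L^2(\G),V)\in\Rep(L^\infty(\check{\G}))$ forces
\begin{equation*}
S_{M,\mcK}\lhd 1_{L^2(\G)}=S_{M,\mcK\otop L^2(\G)}\circ S_{M\lhd\mcK,L^2(\G)}^{-1}=:\Phi_{M,\mcK}.
\end{equation*}
Both maps on the right have module variable in $\Rep(L^\infty(\check{\G}))$, hence are determined by the restricted structure, and amplification $-\lhd 1_{L^2(\G)}$ is faithful; so a $\Rep(\G)$-module structure restricting to a given $\Rep(L^\infty(\check{\G}))$-module structure is unique if it exists.

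For existence I take the displayed formula as the \emph{definition} of a candidate $S_{M,\mcK}$, and the hard part will be to show that $\Phi_{M,\mcK}$ is genuinely an amplification $T\otimes 1_{L^2(\G)}$ on the last leg; this is the main obstacle. Naturality of the restricted structure in the module variable, tested against the intertwiners $1_\mcK\otop y$ with $y\in\mathscr{L}^\G(L^2(\G))=L^\infty(\hat{\G})$, shows that $\Phi_{M,\mcK}$ commutes with $1\otimes L^\infty(\hat{\G})$ on the last leg, i.e. its last leg lies in $L^\infty(\check{\G})=L^\infty(\hat{\G})'$. To cut this down to scalars I use that the objects involved lie in $\Rep(A^\rtimes)$, so $\Phi_{M,\mcK}$ is $A^\rtimes$-linear: conjugating by the unitary implementations of the $\G$-representations (which by Fell's absorption trivialize the regular factor $V$, cf. the identification $U_F\in{}_A\mathscr{L}^\G(\mcG_F\lhd L^2(\G),S^\G(\mcG_F))$ from Proposition~\ref{integrablequasi} and \eqref{semicoarse}, \eqref{notation}) turns the action of $1\otimes L^\infty(\check{\G})\subseteq A^\rtimes$ into a clean last-leg multiplication and yields a complementary commutation constraint. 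Since $L^\infty(\G)$ and $L^\infty(\hat{\G})$ generate $B(L^2(\G))$, the two constraints together pin the last leg down to $\C$, so $\Phi_{M,\mcK}=S_{M,\mcK}\lhd 1_{L^2(\G)}$ for a unique $S_{M,\mcK}$; this step is carried out by the same explicit manipulation of the family $\{S_{M,X}\}$ as in the proof of Proposition~\ref{integrablefullyfaithful}. Granting this, unitarity of $S_{M,\mcK}$ follows because amplification reflects it; naturality in $M$ and $\mcK$ follows by amplifying and invoking naturality of the restricted structure for $f\lhd 1$ and $g\otop 1_{L^2(\G)}$; the pentagon \eqref{module functor} for arbitrary $X,Y\in\Rep(\G)$ follows by amplifying with $L^2(\G)$ and reducing to the pentagon already known over $\Rep(L^\infty(\check{\G}))$ for the objects $X\otop Y\otop L^2(\G)$ and $Y\otop L^2(\G)$; and agreement with the original $S$ on $\Rep(L^\infty(\check{\G}))$ is automatic, since for $X\in\Rep(L^\infty(\check{\G}))$ the pentagon gives $S_{M,X\otop L^2(\G)}\circ S_{M\lhd X,L^2(\G)}^{-1}=S_{M,X}\lhd 1$.

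Finally, for bijectivity on morphisms, restriction is injective on $\Nat$ because the underlying natural transformation is unchanged and the diagrams \eqref{rio} over $\Rep(\G)$ include those over $\Rep(L^\infty(\check{\G}))$. For surjectivity, given $\eta\in\Nat_{\Rep(L^\infty(\check{\G}))}(F,G)$ I must verify \eqref{rio} for an arbitrary $\mcK\in\Rep(\G)$ with the extended maps. Amplifying the desired square by $-\lhd 1_{L^2(\G)}$, substituting the defining formula for $S_{-,\mcK}\lhd 1$, and using that \eqref{rio} already holds for the module objects $\mcK\otop L^2(\G)$ and $L^2(\G)$ together with $M\lhd(\mcK\otop L^2(\G))=(M\lhd\mcK)\lhd L^2(\G)$, both composites around the amplified square collapse to $S^G_{M,\mcK\otop L^2(\G)}\circ\eta_{(M\lhd\mcK)\lhd L^2(\G)}\circ (S^F_{M\lhd\mcK,L^2(\G)})^{-1}$; faithfulness of amplification then delivers \eqref{rio} for $\mcK$. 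Hence restriction induces a bijection $\Nat_{\Rep(\G)}(F,G)\to\Nat_{\Rep(L^\infty(\check{\G}))}(F,G)$, and combined with the object bijection this proves that \eqref{r1} and \eqref{r2} are isomorphisms of categories.
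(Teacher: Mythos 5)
Your overall architecture coincides with the paper's: extend the structure by forcing $S_{M,\mcK}\otimes 1=S_{M,\mcK\otop L^2(\G)}\circ S_{M\lhd\mcK,L^2(\G)}^{*}$, show the right-hand side $\Phi_{M,\mcK}$ has scalar last leg, then recover unitarity, naturality, the coherence \eqref{module functor} and the statement on natural transformations by amplification with $L^2(\G)$; your uniqueness argument, the injectivity of restriction on $\Nat$, and your surjectivity argument (amplify the square \eqref{rio}, use it at $\mcK\otop L^2(\G)$ and at $L^2(\G)$, cancel) all match the paper. However, the decisive scalar-leg step is not established by the two constraints you propose. Constraint (a) is fine: naturality against $\mathscr{L}^\G((L^2(\G),V))=L^\infty(\hat{\G})$ does put the last leg of $\Phi_{M,\mcK}$ in $L^\infty(\check{\G})$. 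But your constraint (b) is extracted only \emph{after} conjugating by the implementing/absorption unitaries, so it constrains the conjugated operator $U'\Phi_{M,\mcK}U^{*}$ rather than $\Phi_{M,\mcK}$ itself; since those unitaries have legs in $L^\infty(\G)$, which does not commute with $L^\infty(\check{\G})$, the two constraints live on different operators and cannot be intersected, and the fact that $L^\infty(\G)\vee L^\infty(\hat{\G})=B(L^2(\G))$ is never applied to a single operator. Moreover (a) plus morphism-hood genuinely cannot suffice: already for $A=\C$ the endomorphisms of $\mcK\lhd(L^2(\G),V)$ with $\mcK$ carrying the trivial $\G$-representation are $B(\mcK)\ovot L^\infty(\hat{\G})$, so any nonscalar element of $Z(L^\infty(\hat{\G}))=L^\infty(\hat{\G})\cap L^\infty(\check{\G})$ gives a morphism with last leg in $L^\infty(\check{\G})$ that is not an amplification. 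What closes this step in the paper is a computation that crucially uses the module coherence of the \emph{restricted} structure (identities of the type \eqref{modulefunctorconsequence}) together with Fell absorption applied to $\Sigma V$, yielding the single relation $V_{23}Y_{12}V_{23}^{*}=Y_{13}$ for $Y=\Phi_{M,\mcK}$; this one relation places the leg in $L^\infty(\G)$ and, via $\Delta(x)=W^{*}(1\otimes x)W$, also in $L^\infty(\check{\G})$, whence it is scalar since $L^\infty(\G)\cap L^\infty(\check{\G})=\C 1$. Your deferral to ``the same manipulation as Proposition \ref{integrablefullyfaithful}'' points at the right kind of computation, but the mechanism you actually describe would not produce it.

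There is a second, smaller gap in your coherence step. Your reduction -- amplify by $L^2(\G)$ and invoke ``the pentagon already known over $\Rep(L^\infty(\check{\G}))$'' -- is circular: after substituting the defining formula, what remains to be proved is the mixed triangle $S_{M,\mcK\otop(\mcK'\otop L^2(\G))}=(S_{M,\mcK}\otimes 1)\circ S_{M\lhd\mcK,\mcK'\otop L^2(\G)}$ with the \emph{first} tensor factor $\mcK\in\Rep(\G)$ arbitrary, while the known coherence requires both factors in $\Rep(L^\infty(\check{\G}))$ and the defining formula only covers second factor $L^2(\G)$. The paper fills exactly this hole with a two-stage diagram chase (diagram \eqref{largecommutative}): first the case $\mcK\in\Rep(L^\infty(\check{\G}))$ with $\mcK'\in\Rep(\G)$ arbitrary, then the general case using a Fell-absorption swap unitary $T\in\mathscr{L}^\G(\mcK'\otop L^2(\G),L^2(\G)\otop\mcK')$. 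So your proposal is the right skeleton with the two hardest technical cores -- the scalar-leg computation and the staged coherence chase -- missing or resting on an argument that, as stated, does not go through.
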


\begin{proof} Let either $\mathscr{C}_A= \Rep^\G(A)$ and $\mathscr{C}_B= \Rep^\G(B)$ or $\mathscr{C}_A= \Rep(A^\rtimes)$ and $\mathscr{C}_A= \Rep^\G(A)$.

Let a functor $F\in \Fun_{\Rep(L^\infty(\check{\G}))}(\mathscr{C}_B, \mathscr{C}_A)$ be given. Implicitly, this means we have natural unitaries
    $$S_{\mcH, \mcK}: F(\mcH\lhd \mcK)\to F(\mcH) \lhd \mcK, \quad \mcH \in \mathscr{C}_B, \quad \mcK\in \Rep(L^\infty(\check{\G}))$$
    satisfying the compatibilities \eqref{module functor}.

    Fix for the moment $\mcH\in \mathscr{C}_B$ and $\mcK\in \Rep(\G)$. We write
    $$Y:= S_{\mcH, \mcK \otop L^2(\G)}\circ S_{\mcH \lhd \mcK, L^2(\G)}^*: F(\mcH \lhd \mcK) \lhd L^2(\G)\to (F(\mcH)\lhd \mcK)\lhd L^2(\G).$$    
    By Fell's absorption principle (Proposition \ref{Fell absorption}), $X:=\Sigma V\in \mathscr{L}^\G(V_{13}V_{23}, V_{13})$. We then calculate
\begin{align*}
    &(1\otimes 1 \otimes X\otimes 1) (Y \otimes 1 \otimes 1)(1_{F(\mcH \lhd \mcK)} \otimes X^* \otimes 1)\\
    &= (1\otimes 1 \otimes X\otimes 1) S_{\mcH, \mcK\otop L^2(\G)\otop L^2(\G)\otop L^2(\G)} S_{\mcH\lhd (\mcK\otop L^2(\G)), L^2(\G)\otop L^2(\G)}^* (S_{\mcH \lhd \mcK, L^2(\G)}^*\otimes 1 \otimes 1)(1_{F(\mcH \lhd \mcK)} \otimes X^* \otimes 1)\\
    &= (1\otimes 1 \otimes X\otimes 1)S_{\mcH, \mcK\otop L^2(\G)\otop L^2(\G)\otop L^2(\G)} S_{\mcH \lhd \mcK, L^2(\G)\otop L^2(\G)\otop L^2(\G)}^*(1_{F(\mcH \lhd \mcK)}\otimes X^*\otimes 1)\\
    &= S_{\mcH, \mcK \otop L^2(\G)\otop (L^2(\G),\mathbb{I})\otop L^2(\G)}F(1\otimes 1 \otimes X\otimes 1)  F(1\otimes 1 \otimes X^* \otimes 1) S_{\mcH\lhd \mcK, L^2(\G)\otop (L^2(\G), \mathbb{I})\otop L^2(\G)}^*\\
    &= S_{\mcH, \mcK \otop L^2(\G)\otop (L^2(\G),\mathbb{I}) \otop L^2(\G)} S_{\mcH \lhd (\mcK\otop L^2(\G)), (L^2(\G),\mathbb{I})\otop L^2(\G)}^* (S_{\mcH\lhd \mcK, L^2(\G)}^*\otimes 1 \otimes 1)= Y \otimes 1 \otimes 1.
\end{align*}
We view $Y\in B(F(\mcH\lhd \mcK), F(\mcH)\otimes \mcK)\ovot B(L^2(\G))$ as having two tensor legs. Then it follows from the above computation that
$V_{23}Y_{12}V_{23}^*= Y_{13}$. In particular, $Y$ has its second leg inside $L^\infty(\G)$. On the other hand, $Y_{13}= (\id \otimes \Delta)(Y) = W_{23}^*Y_{13}W_{23}$ so $Y$ also has its second leg in $L^\infty(\check{\G})$. Since $L^\infty(\G)\cap L^\infty(\check{\G})= \C1$, it follows that there exists a unique unitary $S_{\mcH, \mcK}: F(\mcH \lhd \mcK)\to F(\mcH)\lhd \mcK$ such that
\begin{equation}\label{unique unitary}S_{\mcH, \mcK \otop L^2(\G)}= (S_{\mcH, \mcK}\otimes 1)\circ S_{\mcH\lhd \mcK, L^2(\G)}.\end{equation}
The notation is not ambiguous, because it coincides with the already defined operator $S_{\mcH, \mcK}$ in case that $\mcK \in \Rep(L^\infty(\check{\G}))$, as follows from the commutativity of the diagram \eqref{module functor}.
  It is straightforward to verify that the unitaries $\{S_{\mcH, \mcK}\}_{\mcH \in \mathscr{C}_B, \mcK\in \Rep(\G)}$ are natural in both variables.

Our next goal is to show that the commutation relation \eqref{module functor} is satisfied for the unitaries $\{S_{\mcH, \mcK}\}_{\mcH\in \mathscr{C}_B, \mcK\in \Rep(\G)}$. In other words, given $\mcH\in \mathscr{C}_B$ and $\mcK, \mcK' \in \Rep(\G)$, we will prove commutativity of the triangle
\begin{equation}\label{trianglee}
\begin{tikzcd}
& F((\mcH \lhd \mcK)\lhd\mcK')= F(\mcH\lhd (\mcK\otop \mcK')) \arrow[ld, "{S_{\mcH\lhd \mcK,\mcK'}}"'] \arrow[rd, "{S_{\mcH, \mcK \otop \mcK'}}"] &                                  \\
F(\mcH\lhd \mcK) \lhd \mcK' \arrow[rr, "{S_{\mcH, \mcK}\otimes 1}"'] &                                                                                                                           & (F(\mcH)\lhd \mcK)\lhd \mcK' = F(\mcH)\lhd (\mcK\otop \mcK')
\end{tikzcd}
\end{equation}

By Fell's absorption principle (Proposition \ref{Fell absorption}), we may pick a unitary $T\in \mathscr{L}^\G(\mcK'\otop L^2(\G), L^2(\G)\otop \mcK')$. We then consider the diagram
  \begin{equation}\label{largecommutative}
\begin{tikzcd}
& F(\mcH \lhd (\mcK\otop \mcK'\otop L^2(\G))) \arrow[lddd, "{S_{\mcH \lhd \mcK, \mcK'\otop L^2(\G)}}"'] \arrow[dd, "{S_{\mcH \lhd (\mcK \otop \mcK'), L^2(\G)}}" description] \arrow[rddd, "{S_{\mcH, \mcK\otop \mcK'\otop L^2(\G)}}"] &         \\
                                           &         &      \\
& F(\mcH \lhd (\mcK\otop \mcK'))\lhd L^2(\G) \arrow[rd, "{S_{\mcH, \mcK\otop \mcK'}\otimes 1}"'] \arrow[ld, "{S_{\mcH \lhd \mcK, \mcK'}\otimes 1}"]          &                  \\
F(\mcH \lhd \mcK) \lhd (\mcK'\otop L^2(\G) \arrow[rr, "{S_{\mcH, \mcK}\otimes 1 \otimes 1}"'] \arrow[d, "1\otimes 1 \otimes T"']                         &                                                                & F(\mcH)\lhd (\mcK \otop \mcK'\otop L^2(\G)) \arrow[d, "1\otimes 1 \otimes T"]                           \\
F(\mcH \lhd \mcK)\lhd (L^2(\G)\otop \mcK') \arrow[rr, "{S_{\mcH, \mcK}\otimes 1 \otimes 1}"] \arrow[rddd, "{S_{\mcH \lhd \mcK, L^2(\G)\otop \mcK'}^*}"'] &                                                                                                       & F(\mcH)\lhd (\mcK \otop L^2(\G)\otop \mcK') \arrow[lddd, "{S_{\mcH, \mcK \otop L^2(\G)\otop \mcK'}^*}"] \\
& F(\mcH\lhd (\mcK \otop L^2(\G))\lhd \mcK' \ \arrow[ru, "{S_{\mcH, \mcK \otop L^2(\G)}\otimes 1}"] \arrow[lu, "{S_{\mcH\lhd \mcK, L^2(\G)}\otimes 1}"']                                              &  \\
&    &                                \\
& F(\mcH\lhd (\mcK \otop L^2(\G)\otop \mcK')) \arrow[uu, "{S_{\mcH\lhd (\mcK \otop L^2(\G)), \mcK'}}" description]&
\end{tikzcd}\end{equation}
The commutativity of \eqref{trianglee} will follow once we can show the commutativity of the small triangle immediately above the middle rectangle in the diagram \eqref{largecommutative}.

First, we note that the two small most upper triangles commute by \eqref{unique unitary}. Next, we note that the outer triangle appearing above the central rectangle commutes by \eqref{module functor}, under the additional assumption that $\mcK\in \Rep(L^\infty(\check{\G}))$ (this uses the fact that $\mcK'\otop L^2(\G)\in \Rep(L^\infty(\check{\G})$). By a diagram chase, we then see that the small triangle immediately above the middle rectangle commutes, provided that $\mcK\in \Rep(L^\infty(\check{\G}))$. Consequently, \eqref{trianglee} commutes when $\mcH\in \mathscr{C}_B$, $\mcK\in \Rep(L^\infty(\check{\G}))$ and $\mcK'\in \Rep(\G)$. Now, we return to the general case where $\mcK\in \Rep(\G)$, and we shift our attention to the other parts of the diagram \eqref{largecommutative} as well. The triangle appearing immediately below the middle rectangle commutes by \eqref{unique unitary}. The lower left and lower right triangles commute since we already argued that \eqref{trianglee} commutes when $\mcK\in \Rep(L^\infty(\check{\G}))$. Furthermore, the middle rectangle trivially commutes, and the outer hexagon commutes: both compositions equal $F(1\otimes 1\otimes T)$, as follows from the naturality of $\{S_{\mcH,\mcK}\}_{\mcH\in \mathscr{C}_B, \mcK\in \Rep(L^\infty(\check{\G}))}$ in the second variable. By a diagram chase, we then conclude the commutativity of the small triangle immediately above the middle rectangle, and the commutativity of \eqref{trianglee} follows.

Thus, we have proven that every $F\in \Fun_{\Rep(L^\infty(\check{\G}))}(\mathscr{C}_B, \mathscr{C}_A)$ can be uniquely extended to a functor $F\in \Fun_{\Rep(\G)}(\mathscr{C}_B, \mathscr{C}_A)$.

Next, let $F,G \in \Fun_{\Rep(L^\infty(\check{\G}))}(\mathscr{C}_B, \mathscr{C}_A)$ be given. We will now prove that 
$$\Nat_{\Rep(L^\infty(\check{\G}))}(F,G)= \Nat_{\Rep(\G)}(F,G).$$
For this, we fix $\eta \in \Nat_{\Rep(L^\infty(\check{\G}))}(F,G)$, $\mcH \in \mathscr{C}_B$ and $\mcK\in \Rep(\G)$. We also consider the diagram
$$
\begin{tikzcd}
F(\mcH\lhd \mcK)\lhd L^2(\G) \arrow[rr, "\eta_{\mcH\lhd \mcK}\otimes 1"] \arrow[d, "{S_{\mcH, \mcK}}\otimes 1"] \arrow[dd, "{S_{\mcH\lhd \mcK, L^2(\G)}^*}"', bend right=60, shift right=9] &  & G(\mcH\lhd \mcK)\lhd L^2(\G) \arrow[d, "{S_{\mcH, \mcK}\otimes 1}"'] \arrow[dd, "{S_{\mcH \lhd \mcK, L^2(\G)}^*}", bend left=60, shift left=9] \\
F(\mcH)\lhd (\mcK \otop L^2(\G)) \arrow[rr, "\eta_{\mcH}\otimes 1 \otimes 1"] \arrow[d, "{S_{\mcH, \mcK \otop L^2(\G)}^*}"]                                                                          &  & G(\mcH)\lhd (\mcK \otop L^2(\G)) \arrow[d, "{S_{\mcH, \mcK \otop L^2(\G)}^*}"']                                                                   \\
F(\mcH \lhd (\mcK \otop L^2(\G))) \arrow[rr, "\eta_{\mcH \lhd (\mcK \otop L^2(\G))}"]                                                                                                               &  & G(\mcH \lhd (\mcK \otop L^2(\G)))                                                                                                                  
\end{tikzcd}$$
The most left and most right part of the diagram clearly commute, as does the lower rectangle, by the fact that $\eta\in  \Nat_{\Rep(L^\infty(\check{\G}))}(F,G)$ and since $\mcK \otop L^2(\G)\in \Rep(L^\infty(\check{\G}))$. Furthermore, the outer part of the diagram also commutes, since $\eta \in \Nat_{\Rep(L^\infty(\check{\G}))}(F,G)$. By another diagram chase, we see that the upper rectangle commutes. As such, we deduce that $\eta\in \Nat_{\Rep(\G)}(F,G)$.
\end{proof}

\begin{Prop}\label{faithfulfunctors}
    The restriction functors
    \begin{align}
         \label{s1}&\Fun_{\Rep(L^\infty(\check{\G}))}(\Rep^\G(B), \Rep^\G(A)) \to  \Fun_{\Rep(L^\infty(\check{\G}))}(\Rep(B^\rtimes), \Rep(A^\rtimes)), \\
          \label{s2}&\Fun_{\Rep(\G)}(\Rep^\G(B), \Rep^\G(A)) \to  \Fun_{\Rep(L^\infty(\check{\G}))}(\Rep(B^\rtimes), \Rep(A^\rtimes)) 
    \end{align}
    are fully faithful.
\end{Prop}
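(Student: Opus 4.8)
The plan is to first reduce \eqref{s2} to \eqref{s1}. By Theorem \ref{restrictionsisos} the restriction functor \eqref{r1} is an isomorphism of categories, and unwinding the definitions shows that \eqref{s2} factors as $\eqref{s1}\circ\eqref{r1}$. Since an isomorphism of categories is in particular fully faithful, \eqref{s2} is fully faithful as soon as \eqref{s1} is. Thus I only need to treat \eqref{s1}. Write $F_r, G_r$ for the restrictions to $\Rep(B^\rtimes)$ of two functors $F,G\in \Fun_{\Rep(L^\infty(\check{\G}))}(\Rep^\G(B), \Rep^\G(A))$, which land in $\Rep(A^\rtimes)$ by \cite{DR25a}*{Lemma 3.9}. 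Fully faithfulness of \eqref{s1} means precisely that the restriction map
$$\Nat_{\Rep(L^\infty(\check{\G}))}(F,G)\to \Nat_{\Rep(L^\infty(\check{\G}))}(F_r,G_r):\eta\mapsto (\eta_\mcM)_{\mcM\in \Rep(B^\rtimes)}$$
is bijective. The decisive structural input is Lemma \ref{restrictionn}: for every $\mcH\in \Rep^\G(B)$ one has $\mcH\lhd L^2(\G)\in \Rep(B^\rtimes)$, so every object of $\Rep^\G(B)$ becomes accessible from $\Rep(B^\rtimes)$ after tensoring with the generator $(L^2(\G),V)\in \Rep(L^\infty(\check{\G}))$.

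For injectivity, I would invoke the module-naturality square \eqref{rio} with $\mcK=L^2(\G)$, which reads $(\eta_\mcH\otimes 1)\,S_{\mcH,L^2(\G)}=S_{\mcH,L^2(\G)}\,\eta_{\mcH\lhd L^2(\G)}$. Since $\mcH\lhd L^2(\G)\in \Rep(B^\rtimes)$, the right-hand side depends only on the restriction of $\eta$; as $S_{\mcH,L^2(\G)}$ is unitary, $\eta_\mcH$ is then completely determined by the restriction, and injectivity follows at once for arbitrary $\mcH\in\Rep^\G(B)$.

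For surjectivity, fix $\zeta\in \Nat_{\Rep(L^\infty(\check{\G}))}(F_r,G_r)$. Guided by the previous display, for $\mcH\in \Rep^\G(B)$ I set
$$Y_\mcH:=S_{\mcH,L^2(\G)}\,\zeta_{\mcH\lhd L^2(\G)}\,S_{\mcH,L^2(\G)}^*\in {}_A\mathscr{L}^\G(F(\mcH)\lhd L^2(\G),\,G(\mcH)\lhd L^2(\G)),$$
which I regard as a two-legged operator in $B(F(\mcH),G(\mcH))\ovot B(L^2(\G))$, and I aim to show $Y_\mcH=\eta_\mcH\otimes 1$ for a (necessarily unique) $\eta_\mcH\in {}_A\mathscr{L}^\G(F(\mcH),G(\mcH))$. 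The crux, and the main obstacle, is exactly this well-definedness, i.e.\ showing that the second leg of $Y_\mcH$ is scalar. I would establish it along the lines of the computation in Theorem \ref{restrictionsisos}: combining the module-naturality of $\zeta$ and of the structure unitaries $S$ over $\Rep(L^\infty(\check{\G}))$ with Fell's absorption principle (Proposition \ref{Fell absorption}), one derives the covariance relations $V_{23}(Y_\mcH)_{12}V_{23}^*=(Y_\mcH)_{13}$ and $W_{23}^*(Y_\mcH)_{13}W_{23}=(Y_\mcH)_{13}$, which place the second leg of $Y_\mcH$ in $L^\infty(\G)\cap L^\infty(\check{\G})=\C 1$. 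As in the proof of Proposition \ref{integrablefullyfaithful}, the genuinely non-compact difficulty is that the naive probe $(L^2(\G),\mathbb{I})$ does not lie in $\Rep(L^\infty(\check{\G}))$; I would bypass this by inserting an auxiliary tensor factor, replacing $(L^2(\G),\mathbb{I})$ by $(L^2(\G),\mathbb{I})\otop (L^2(\G),V)$, which belongs to $\Rep(L^\infty(\check{\G}))$ by the ideal property of Example \ref{ideal property}, so that all the Fell intertwiners stay inside $\Rep(L^\infty(\check{\G}))$ and $\zeta$ may legitimately be applied to them.

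It then remains to verify that $\eta:=(\eta_\mcH)_{\mcH\in\Rep^\G(B)}$ is an element of $\Nat_{\Rep(L^\infty(\check{\G}))}(F,G)$ restricting to $\zeta$. That $\eta$ restricts to $\zeta$ is immediate: for $\mcM\in \Rep(B^\rtimes)$ the module-naturality of $\zeta$ gives $S_{\mcM,L^2(\G)}\zeta_{\mcM\lhd L^2(\G)}S_{\mcM,L^2(\G)}^*=\zeta_\mcM\otimes 1$, whence $\eta_\mcM=\zeta_\mcM$. The naturality of $\eta$ in $\mcH$ and the module-naturality square \eqref{rio} for arbitrary $\mcK\in \Rep(L^\infty(\check{\G}))$ would then follow by the generator argument already used at the end of Proposition \ref{integrablefullyfaithful}: both sides of each identity define natural transformations in the remaining variable that agree on the generator $(L^2(\G),V)$ of $\Rep(L^\infty(\check{\G}))$, so the injectivity of the bijection \eqref{Rieffelbijection} forces them to coincide. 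This yields surjectivity and completes the argument; I expect everything except the covariance computation for $Y_\mcH$ to be routine bookkeeping.
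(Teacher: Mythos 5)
Your proposal is correct and follows essentially the same route as the paper: reduce \eqref{s2} to \eqref{s1} via Theorem \ref{restrictionsisos}, obtain injectivity from the square \eqref{rio} at $\mcK=L^2(\G)$ using $\mcH\lhd L^2(\G)\in\Rep(B^\rtimes)$, and for surjectivity define $\eta_\mcH$ by splitting off the second leg of $S_{\mcH,L^2(\G)}\,\zeta_{\mcH\lhd L^2(\G)}\,S_{\mcH,L^2(\G)}^*$, proving its scalarity with the Fell intertwiner $\Sigma V$ and an auxiliary $(L^2(\G),V)$-factor so that all probes stay in $\Rep(L^\infty(\check{\G}))$ — exactly the paper's computation. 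The only cosmetic deviation is the final verification of the module square for general $\mcK\in\Rep(L^\infty(\check{\G}))$, where the paper runs an explicit diagram chase with a Fell unitary $T\in\mathscr{L}^\G(\mcK\otop L^2(\G),L^2(\G)\otop\mcK)$ while you use the generator argument via \eqref{Rieffelbijection}, the same device the paper itself employs at the corresponding point of Proposition \ref{integrablefullyfaithful}, so both versions go through.
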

\begin{proof}
    Since the functor \eqref{s2} is the composition of the functors \eqref{r1} and \eqref{s1}, it suffices to show that the functor \eqref{s1} is fully faithful. In other words, given $F,G \in \Fun_{\Rep(L^\infty(\check{\G}))}(\Rep^\G(B), \Rep^\G(A))$, we will prove that the map
    \begin{equation}\label{bijection}
       \Nat_{\Rep(L^\infty(\check{\G}))}(F, G)\to \Nat_{\Rep(L^\infty(\check{\G}))}(F, G): (\eta_\mcK)_{\mcK\in \Rep^\G(B)} \mapsto (\eta_{\mcK})_{\mcK\in \Rep(B^\rtimes)}
    \end{equation}
    is bijective.

    If $(\eta_\mcH)_{\mcH\in \Rep^\G(B)}, (\theta_\mcH)_{\mcH\in \Rep^\G(B)}$ agree on $\Rep(B^\rtimes)$, then for $\mcH \in \Rep^\G(B)$, we have $\mcH\lhd L^2(\G)\in \Rep(B^\rtimes)$, and thus
    \begin{align*}
        \eta_\mcH\otimes 1&= S_{\mcH, L^2(\G)}\circ  \eta_{\mcH\lhd L^2(\G)}\circ  S_{\mcH, L^2(\G)}^*=  S_{\mcH, L^2(\G)} \circ \theta_{\mcH\lhd L^2(\G)} \circ S_{\mcH,L^2(\G)}^* = \theta_\mcH\otimes 1.
    \end{align*}
    Therefore, $(\eta_\mcH)_{\mcH\in \Rep^\G(B)}, (\theta_\mcH)_{\mcH\in \Rep^\G(B)}$ agree everywhere, showing the injectivity of \eqref{bijection}.

    To show the surjectivity of \eqref{bijection}, we argue similarly as in the proof of Theorem \ref{restrictionsisos}. Let $(\eta_\mcH)_{\mcH\in \Rep(B^\rtimes)}\in \Nat_{\Rep(L^\infty(\check{\G}))}(F, G)$ be given. Given $\mcH \in \Rep^\G(B)$, we have $\mcH \lhd L^2(\G)\in \Rep(B^\rtimes)$ and it makes sense to consider the composition
    $$Y:=S_{\mcH, L^2(\G)}\circ  \eta_{\mcH \lhd L^2(\G)} \circ S_{ \mcH, L^2(\G)}^*: F(\mcH)\lhd L^2(\G)\to  G(\mcH)\lhd L^2(\G).$$
    Writing 
$X:= \Sigma V \in \mathscr{L}^\G((L^2(\G),V)\otop (L^2(\G),V), (L^2(\G),V)\otop (L^2(\G), \mathbb{I}))$
and introducing the notations
\begin{align*}
    S_{V_{14}V_{24}V_{34}}^{1,3}&:= S_{\mcH, (L^2(\G),V)\otop (L^2(\G),V)\otop (L^2(\G),V)}, & S_{V_{14}V_{34}}^{1,3} &:= S_{\mcH, (L^2(\G),V)\otop (L^2(\G),\mathbb{I})\otop (L^2(\G),V)},\\
    S_{V_{13}V_{23}}^{2,2}&:= S_{\mcH\lhd (L^2(\G),V), (L^2(\G),V)\otop (L^2(\G),V)}, & S_{V_{23}}^{2,2}&:=S_{\mcH\lhd  (L^2(\G),V), (L^2(\G),\mathbb{I})\otop (L^2(\G),V)},
\end{align*}
we compute
\begin{align*}
    &(1\otimes X\otimes 1) (Y\otimes 1 \otimes 1)(1\otimes X^* \otimes 1)\\
    &= (1\otimes X \otimes 1) S_{V_{14}V_{24}V_{34}}^{1,3} S_{V_{13}V_{23}}^{2,2,*} (\eta_{\mathcal{H}\lhd L^2(\G)} \otimes 1 \otimes 1) S_{V_{13}V_{23}}^{2,2} S_{V_{14}V_{24}V_{34}}^{1,3,*}(1\otimes X^*\otimes 1)\\
    &= (1\otimes X \otimes 1) S_{V_{14}V_{24}V_{34}}^{1,3} \eta_{\mathcal{H}\lhd (L^2(\G)\otop L^2(\G)\otop L^2(\G))} S_{V_{14}V_{24}V_{34}}^{1,3,*} (1\otimes X^*\otimes 1)\\
    &=S_{V_{14}V_{34}}^{1,3}G(1 \otimes X \otimes 1) \eta_{\mathcal{H}\lhd (L^2(\G)\otop L^2(\G)\otop L^2(\G))} F(1\otimes X^*\otimes 1) S_{V_{14}V_{34}}^{1,3,*}\\
    &= S_{V_{14}V_{34}}^{1,3} \eta_{\mcH\lhd ((L^2(\G),V)\otop (L^2(\G),\mathbb{I}) \otop (L^2(\G),V))} S_{V_{14}V_{34}}^{1,3,*}\\
    &= S_{V_{14}V_{34}}^{1,3} S_{V_{23}}^{2,2,*} (\eta_{\mathcal{H}\lhd L^2(\G)}\otimes 1 \otimes 1) S_{V_{23}}^{2,2} S_{V_{14}V_{34}}^{3,1,*}= Y\otimes 1 \otimes 1,
\end{align*}
from which it follows that 
$(1\otimes V)(Y\otimes 1)(1\otimes V^*)= Y_{13}.$ Consequently, we see that $Y \in B(F(\mcH), G(\mcH))\ovot \mathbb{C}1$, so there exists a unique bounded linear operator $\eta_\mcH: F(\mcH)\to G(\mcH)$ such that 
    \begin{equation}\label{comm}
\begin{tikzcd}
F(\mcH\lhd L^2(\G)) \arrow[rr, "{S_{\mcH, L^2(\G)}}"] \arrow[d, "\eta_{\mcH\lhd L^2(\G)}"'] &  & F(\mcH)\lhd L^2(\G) \arrow[d, "\eta_{\mcH}\otimes 1"] \\
G(\mcH\lhd L^2(\G)) \arrow[rr, "{S_{\mcH, L^2(\G)}}"]                                          &  & G(\mcH)\lhd L^2(\G)                                     
\end{tikzcd}
    \end{equation}
    commutes. Again, the notation $\eta_\mcH$ is unambiguous and it is straightforward to verify that $(\eta_\mcH)_{\mcH\in \Rep^\G(B)}$ defines a natural transformation.
    
    Given $\mcH \in \Rep^\G(B)$ and $\mcK \in \Rep(L^\infty(\check{\G}))$, we now prove that the rectangle
    \begin{equation}\label{rectangle}
\begin{tikzcd}
F(\mcH\lhd \mcK) \arrow[rr, "\eta_{\mcH\lhd \mcK}"'] \arrow[d, "{S_{\mcH, \mcK}}"'] &  & G(\mcH\lhd \mcK) \arrow[d, "{S_{\mcH, \mcK}}"] \\
F(\mcH)\lhd \mcK \arrow[rr, "\eta_{\mcH}\otimes 1"]                                    &  & G(\mcH)\lhd \mcK                              
\end{tikzcd}
    \end{equation}
    commutes. To do this, we fix a unitary $T \in \mathscr{L}^\G(\mcK \otop L^2(\G), L^2(\G)\otop \mcK)$ and we consider the diagram
    $$
\begin{tikzcd}
F(\mcH\lhd \mcK)\lhd L^2(\G) \arrow[rr, "\eta_{\mcH\lhd \mcK}\otimes 1"] \arrow[ddddd, "{S_{\mcH, \mcK}\otimes 1}"', bend right=49, shift right=13] \arrow[d, "{S_{\mcH\lhd\mcK, L^2(\G)}^*}"] &  & G(\mcH\lhd \mcK)\lhd L^2(\G) \arrow[ddddd, "{S_{\mcH, \mcK}\otimes 1}", bend left=49, shift left=13] \arrow[d, "{S_{\mcH\lhd\mcK, L^2(\G)}^*}"'] \\
F(\mcH \lhd (\mcK \otop L^2(\G)) \arrow[rr, "\eta_{\mcH \lhd (\mcK\otop L^2(\G))}"] \arrow[d, "F(1\otimes T)"]                                                                                 &  & G(\mcH \lhd (\mcK \otop L^2(\G)) \arrow[d, "G(1\otimes T)"']                                                                                     \\
F(\mcH \lhd (L^2(\G) \otop \mcK)) \arrow[rr, "\eta_{\mcH \lhd (L^2(\G)\otop \mcK)}"] \arrow[d, "{S_{\mcH\lhd L^2(\G),\mcK}}"]                                                                   &  & G(\mcH \lhd (L^2(\G) \otop \mcK)) \arrow[d, "{S_{\mcH\lhd L^2(\G),\mcK}}"']                                                                       \\
F(\mcH \lhd L^2(\G))\lhd \mcK \arrow[rr, "{\eta_{\mcH \lhd L^2(\G)}\otimes 1}"] \arrow[d, "{S_{\mcH, L^2(\G)}\otimes 1}"]                                                                         &  & G(\mcH \lhd L^2(\G))\lhd \mcK \arrow[d, "{S_{\mcH, L^2(\G)}\otimes 1}"']                                                                         \\
F(\mcH)\lhd (L^2(\G)\otop \mcK) \arrow[d, "1\otimes T^*"] \arrow[rr, "\eta_{\mcH}\otimes 1 \otimes 1"]                                                                                         &  & G(\mcH)\lhd (L^2(\G)\otop \mcK) \arrow[d, "1\otimes T^*"']                                                                                          \\
F(\mcH)\lhd (\mcK\otop L^2(\G)) \arrow[rr, "\eta_\mcH \otimes 1 \otimes 1"]                                                                                                                    &  & G(\mcH)\lhd (\mcK\otop L^2(\G))                                                                                                                 
\end{tikzcd}$$
Consider the five subrectangles stacked on top of each other. Numbering from top to bottom, we explain why each of them commutes. The first one commutes since $\mcH \lhd \mcK\in \Rep(B^\rtimes)$, the second one by naturality of $\eta$, the third one since $\mcH \lhd L^2(\G)\in \Rep(B^\rtimes)$, the fourth one by \eqref{comm} and the fifth one commutes trivially. Furthermore, it is easy to see that the most left and the most right part of the diagram commute. Consequently, the entire outer part of the diagram commutes. By a diagram chase, it follows that the rectangle \eqref{rectangle} commutes.
\end{proof}

The following useful result will allow us to transfer results established from the integrable case to the general case:

\begin{Lem}\label{moritaintegrable}
    Let $(B, \beta)$ be a $\G$-$W^*$-algebra. There exists an integrable $\G$-$W^*$-algebra $(C, \gamma)$ that is $\G$-$W^*$-Morita equivalent with $(B, \beta)$.
\end{Lem}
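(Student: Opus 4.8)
The plan is to realize the desired integrable algebra as an iterated crossed product, exploiting the two facts that (i) dual actions are always integrable and (ii) an action is $\G$-$W^*$-Morita equivalent to the double crossed product carrying the bidual action (Takesaki--Takai duality in the equivariant setting).

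First I would form the crossed product $B^\rtimes = B \rtimes_\beta \G$ with its dual action $\beta^\rtimes : B^\rtimes \curvearrowleft \check{\G}$, as recalled in Subsection 2.4. Iterating, I set $C := (B^\rtimes) \rtimes_{\beta^\rtimes} \check{\G}$ and let $\gamma := (\beta^\rtimes)^\rtimes$ be the dual action of $\check{\check{\G}}$ on $C$. Using Pontryagin biduality $\check{\check{\G}} \cong \G$ (implemented by $x \mapsto u_\G x u_\G$, recorded in Subsection 2.2), I regard $(C, \gamma)$ as a $\G$-$W^*$-algebra. The integrability of $(C,\gamma)$ is then built in: $\gamma$ is a \emph{dual} action, and the dual weight construction of \cite{Va01} produces a normal, semifinite, faithful operator-valued weight from $C$ onto the fixed-point algebra $\gamma$-fixed part, namely the copy of $B^\rtimes$ sitting inside $C$. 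By the characterization \cite{Va01}*{Theorem 5.3} recalled just before Proposition \ref{integrablequasi}, the semifiniteness of this operator-valued weight is equivalent to $L^2(C) \in \Rep(C^\rtimes)$, i.e.\ to the integrability of $(C,\gamma)$.

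It then remains to check that $(C, \gamma) \sim_\G (B, \beta)$. Here I would invoke the biduality theorem for actions of locally compact quantum groups, which yields a $\G$-equivariant isomorphism of $(C, \gamma)$ with the stabilization $B \ovot B(L^2(\G))$ equipped with the diagonal (stabilized) action; combining this with the standard stabilization Morita equivalence, upgraded to the equivariant level as in the equivariant Morita theory of \cites{DR25a,DR25b}, produces a $\G$-$C$-$B$-Morita correspondence and hence $(C,\gamma) \sim_\G (B,\beta)$. Since $\G$-$W^*$-Morita equivalence is the symmetric relation defined in Subsection 2.5, this gives the statement.

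The main obstacle I anticipate is not the construction itself but the careful bookkeeping in the last step: one must match the conventions for the bidual action with the explicit diagonal action on $B \ovot B(L^2(\G))$ appearing in the biduality theorem (including the $\check{\check{\G}} \cong \G$ identification via $u_\G$), and verify that the resulting equivariant correspondence genuinely satisfies $\pi(C)' = \rho(B)$ with $\pi,\rho$ faithful. Provided the equivariant Takesaki--Takai statement from \cite{DR25b} is available in the precise form needed, these verifications are routine, and the integrability in the middle paragraph is the only genuinely structural input.
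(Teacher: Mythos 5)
Your proposal is correct and follows essentially the same route as the paper: take $C = B^{\rtimes\rtimes}$ with the bidual action, transported along $\check{\check{\G}}\cong \G$ via conjugation by $u_\G$, use \cite{Va01}*{Proposition 2.5} (integrability of dual actions) for integrability, and the equivariant Takesaki--Takai isomorphism onto $B \ovot B(L^2(\G))$ with the stabilized action to get the Morita equivalence. The paper simply makes your last step concrete by writing down the explicit $\G$-$C$-$B$-Morita correspondence on $L^2(B)\otimes L^2(\G)$ with unitary $\G$-representation $W_{32}U_{\beta,13}$, citing \cite{DCDR24}*{Theorem 5.25}, which is exactly the bookkeeping you flagged.
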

\begin{proof} If $(A, \check{\check{\alpha}})$ is an integrable $\check{\check{\G}}$-$W^*$-algebra, then it is easily verified that $(A, \alpha)$ is an integrable $\G$-$W^*$-algebra, where
$$\alpha(a):= (1\otimes u_\G)\check{\check{\alpha}}(a)(1\otimes u_\G), \quad a \in A.$$
It follows from \cite{Va01}*{Proposition 2.5} that the $\check{\check{\G}}$-$W^*$-algebra $(B^{\rtimes \rtimes}, \beta^{\rtimes \rtimes})$ is integrable. Consequently, writing $C:= B^{\rtimes \rtimes}$ and considering
$$\gamma(c):= (1\otimes u_\G)\beta^{\rtimes \rtimes}(c) (1\otimes u_\G), \quad c \in C,$$
it follows that the $\G$-$W^*$-algebra $(C, \gamma)$ is integrable. The von Neumann algebra $B \ovot B(L^2(\G))$ carries the $\G$-action
$$\delta(z):= W_{32}(\beta\otimes \id)(z)_{132}W_{32}^*, \quad z \in B\ovot B(L^2(\G)),$$
and the Takesaki-Takai isomorphism
$$\Phi:  (B\ovot B(L^2(\G)), \delta)\to (C, \gamma): z \mapsto V_{23}^*(\beta \otimes \id)(z)V_{23}$$
is $\G$-equivariant. It is then easily verified that we have $(B, \beta)\sim_\G (C, \gamma)$, with an explicit $\G$-$C$-$B$-Morita correspondence given by the Hilbert space $L^2(B)\otimes L^2(\G)$ endowed with the $C$-$B$-bimodule structure
$$c \xi b := (\pi_B\otimes \id)(\Phi^{-1}(c)) (\rho_B(b)\otimes 1)\xi, \quad \xi\in L^2(B)\otimes L^2(\G), \quad b\in B, \quad c \in C$$
and the unitary $\G$-representation 
$W_{32}U_{\beta,13}\in B(L^2(B)\otimes L^2(\G))\ovot L^\infty(\G)$, cfr.\ \cite{DCDR24}*{Theorem 5.25}.
\end{proof}

We then come to the first main result of this paper:

\begin{Theorem}[Equivariant Eilenberg-Watts theorem I] \label{main1}
    The functors \eqref{1}, \eqref{2}, \eqref{3} and \eqref{4} are all equivalences of categories. Moreover, $P$ and $Q$ are quasi-inverse to each other.
\end{Theorem}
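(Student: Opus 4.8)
The plan is to prove first that the single functor \eqref{4}, i.e.\ $P_{r,m}$, is an equivalence, and then to deduce all of \eqref{1}, \eqref{2}, \eqref{3} together with the quasi-inverse statement by purely formal manipulations resting on Theorem \ref{restrictionsisos}, Proposition \ref{faithfulfunctors}, Proposition \ref{alreadyproven}, and one standard categorical cancellation principle. Throughout, every functor involved is a normal $*$-functor and every natural isomorphism produced is unitary, since a fully faithful $*$-functor is isometric on morphism spaces and therefore pulls back unitaries to unitaries; I will use this silently to remain inside the $W^*$/unitary world. The categorical input I rely on is the fact that if $G$ is fully faithful and $G\circ F\cong\id$, then $F$ and $G$ are mutually quasi-inverse equivalences: for $d$ in the domain of $G$ the isomorphism $GFG(d)\cong G(d)$ pulls back through $G$ to an isomorphism $FG(d)\cong d$, and naturality is routine. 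Applying this with $F=P_{r,m}$ and $G=Q_{r,m}$, Proposition \ref{integrablequasi} (giving $Q_{r,m}\circ P_{r,m}\cong\id$) together with Proposition \ref{integrablefullyfaithful} ($Q_{r,m}$ fully faithful) shows that \eqref{4} is an equivalence whenever $(B,\beta)$ is integrable.

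To remove the integrability hypothesis, I would invoke Lemma \ref{moritaintegrable} to choose an integrable $(C,\gamma)$ together with a $\G$-$C$-$B$-Morita correspondence $\mcM\in\Corr^\G(C,B)$, invertible for Connes fusion with inverse $\bar{\mcM}$. This yields two equivalences on the two sides of the picture: on the correspondence side, $-\boxtimes_C\mcM\colon\Corr^\G(A,C)\to\Corr^\G(A,B)$ with quasi-inverse $-\boxtimes_B\bar{\mcM}$; and on the functor side, the module equivalence $F_\mcM=P(\mcM)\colon\Rep^\G(B)\to\Rep^\G(C)$, which by \cite{DR25a}*{Lemma 3.9} restricts to an equivalence $\Rep(B^\rtimes)\to\Rep(C^\rtimes)$ of $\Rep(L^\infty(\check{\G}))$-module categories and hence induces, by precomposition, an equivalence $-\circ F_\mcM$ between the associated functor categories. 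Associativity of the Connes fusion tensor product supplies a module-natural isomorphism $F_{\mcH\boxtimes_C\mcM}\cong F_\mcH\circ F_\mcM$, natural in $\mcH$; restricting to $\Rep(B^\rtimes)$ this says precisely that the square with horizontal arrows $P_{r,m}^C,P_{r,m}^B$ and vertical arrows $-\boxtimes_C\mcM,\ -\circ F_\mcM$ commutes up to unitary natural isomorphism. As three of its four edges are equivalences (the bottom edge $P_{r,m}^C$ by the integrable case just settled), so is the fourth, proving that \eqref{4} is an equivalence for arbitrary $(B,\beta)$.

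I would then harvest the remaining assertions. By Proposition \ref{faithfulfunctors} the functor \eqref{s2}, which is exactly $(-)_{r,m}$, is fully faithful, and $\eqref{4}=(-)_{r,m}\circ P$. Left-cancelling the fully faithful \eqref{s2} from the equivalence \eqref{4} makes $P=\eqref{1}$ fully faithful, while essential surjectivity of \eqref{4} combined with the essential injectivity of the fully faithful \eqref{s2} (fully faithful functors reflect isomorphisms, hence are injective on isomorphism classes) forces $P$ to be essentially surjective as well; thus \eqref{1} is an equivalence. Since $Q\circ P\cong\id_{\Corr^\G(A,B)}$ by Proposition \ref{alreadyproven}, a left quasi-inverse of an equivalence is automatically two-sided, so $Q$ is a quasi-inverse of $P$. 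Finally, $\eqref{3}=(-)_m\circ P$ and $\eqref{2}=(-)_m^{-1}\circ\eqref{4}$ are equivalences because the two restriction functors \eqref{r1} and \eqref{r2} are isomorphisms of categories by Theorem \ref{restrictionsisos}, and the composite of an isomorphism with an equivalence is an equivalence.

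The main obstacle is the middle step: verifying that the Morita correspondence $\mcM$ genuinely descends to a module equivalence of the crossed-product representation categories and that the resulting square commutes compatibly with \emph{all} the module structures. This is where the equivariant Morita theory of \cites{DR25a,DR25b}, the behaviour of crossed products on correspondences, and the associativity of the Connes fusion tensor product must be dovetailed together; each individual verification (that $F_\mcM$ sends $\Rep(B^\rtimes)$ into $\Rep(C^\rtimes)$, that the comparison isomorphism is module-natural and unitary, that it is natural in $\mcH$) is routine, but their coherent assembly is the only genuinely non-formal content of the proof. Everything else reduces to the cancellation principle and to the structural results already in hand.
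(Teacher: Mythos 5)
Your proposal is correct and follows essentially the same route as the paper: reduce the equivalence of \eqref{4} to the integrable case via Lemma \ref{moritaintegrable} and a Morita-conjugation square (the paper's $T\circ P_{r,m}^{A,C}\circ S\cong P_{r,m}^{A,B}$), settle the integrable case by combining Propositions \ref{integrablequasi} and \ref{integrablefullyfaithful}, and then harvest \eqref{1}, \eqref{2}, \eqref{3} from Theorem \ref{restrictionsisos} and Proposition \ref{faithfulfunctors}. The one mild streamlining on your side is that you deduce \eqref{1} purely formally by left-cancelling the fully faithful restriction \eqref{s2} from the already-general equivalence \eqref{4}, whereas the paper runs a second Morita reduction because its argument for \eqref{1} factors $Q$ through $Q_{r,m}$, which is only defined when $(B,\beta)$ is integrable; both arguments are sound.
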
  
\begin{proof}
We start by showing that the functor \eqref{4} is an equivalence of categories. To emphasize the dependency on the $\G$-$W^*$-algebras $A,B$, let us write 
$$P_{r,m}=P_{r,m}^{A,B}: \Corr^\G(A,B)\to \Fun_{\Rep(L^\infty(\check{\G}))}(\Rep(B^\rtimes), \Rep(A^\rtimes)).$$
By Lemma \ref{moritaintegrable}, we can fix an integrable $\G$-$W^*$-algebra $(C, \gamma)$ together with a $\G$-$W^*$-Morita correspondence $\mcG \in \Corr^\G(B,C)$. Considering the dual $\G$-$W^*$-Morita correspondence $\overline{\mcG}\in \Corr^\G(C,B)$ \cite{DCDR24}*{Definition 5.1}, we have unitary isomorphisms $\mcG\boxtimes_C \overline{\mcG}\cong L^2(B)$ and $\overline{\mcG}\boxtimes_B \mcG\cong L^2(C)$ of equivariant correspondences \cite{DCDR24}*{Proposition 5.10}. One then verifies that
$$F_\mcG \in \Fun_{\Rep(L^\infty(\check{\G}))}(\Rep(C^\rtimes), \Rep(B^\rtimes)), \quad F_{\overline{\mcG}}\in \Fun_{\Rep(L^\infty(\check{\G}))}(\Rep(B^\rtimes), \Rep(C^\rtimes))$$
implement equivalences of $\Rep(L^\infty(\check{\G}))$-module categories. Therefore, we obtain the induced equivalences
\begin{align*}
    &S: \Corr^\G(A,B)\to \Corr^\G(A,C): \mK \mapsto \mK\boxtimes_B \mG,\\
    &T: \Fun_{\Rep(L^\infty(\check{\G}))}(\Rep(C^\rtimes), \Rep(A^\rtimes))\to\Fun_{\Rep(L^\infty(\check{\G}))}(\Rep(B^\rtimes), \Rep(A^\rtimes)): F \mapsto F \circ F_{\overline{\mG}}
\end{align*}
of categories. We then verify that
$T \circ P_{r,m}^{A,C}\circ S\cong P_{r,m}^{A,B}$,
so we can assume without loss of generality that $(B, \beta)$ is integrable. In that case, it suffices to show that the functor $$Q_{r,m}: \Fun_{\Rep(L^\infty(\check{\G}))}(\Rep(B^\rtimes), \Rep(A^\rtimes))\to \Corr^\G(A,B)$$
from Proposition \ref{integrablequasi} is quasi-inverse to $P_{r,m}$. This follows from Proposition \ref{integrablefullyfaithful} and from the fact that $Q_{r,m} \circ P_{r,m}\cong \id_{\Corr^\G(A,B)}$.

Next, we show that $P: \Corr^\G(A,B)\to \Fun_{\Rep(\G)}(\Rep^\G(B), \Rep^\G(A))$ is an equivalence of categories. First, we prove this if $(B, \beta)$ is integrable. In that case, we have the factorization
$$
\begin{tikzcd}
{\Fun_{\Rep(\G)}(\Rep^\G(B), \Rep^\G(A))} \arrow[rd, "{(-)_{r,m}}"'] \arrow[rr, "Q"] &                                                                                                       & {\Corr^\G(A,B)} \\
& {\Fun_{\Rep(L^\infty(\check{\G}))}(\Rep(B^\rtimes), \Rep(A^\rtimes))} \arrow[ru, "Q_{r,m}"'] &              
\end{tikzcd}$$
Since both $Q_{r,m}$ and $(-)_{r,m}$ are fully faithful (Proposition \ref{faithfulfunctors}), it follows that $Q$ is fully faithful as well. Since $Q\circ P \cong \id_{\Corr^\G(A,B)}$, it thus follows that $P$ is an equivalence of categories. By a similar Morita argument as before, it follows that $P$ is an equivalence of categories in the non-integrable case as well. Since we have $Q \circ P \cong \id_{\Corr^\G(A,B)}$, it follows that $Q$ is quasi-inverse to $P$.

Keeping in mind the factorizations
$$\begin{tikzcd}
{\Corr^\G(A,B)} \arrow[rr, "P_r"] \arrow[rd, "{P_{r,m}}"'] &                                                                                  & {\Fun_{\Rep(\G)}(\Rep(B^\rtimes), \Rep(A^\rtimes))} \arrow[ld, "\cong"] \\
& {\Fun_{\Rep(L^\infty(\check{\G}))}(\Rep(B^\rtimes), \Rep(A^\rtimes))} &                                                                                     
\end{tikzcd}$$
$$
\begin{tikzcd}
{\Corr^\G(A,B)} \arrow[rd, "P"'] \arrow[rr, "P_m"] &                                                                             & {\Fun_{\Rep(L^\infty(\check{\G}))}(\Rep^\G(B), \Rep^\G(A))} \\
& {\Fun_{\Rep(\G)}(\Rep^\G(B), \Rep^\G(A))} \arrow[ru, "\cong"'] &                                                                       
\end{tikzcd}$$
where the isomorphisms come from Theorem \ref{restrictionsisos}, it follows that $P_r$ and $P_m$ are equivalences as well.
\end{proof}

\begin{Cor}
    The restriction functors \eqref{s1} and \eqref{s2} are equivalences of categories.
\end{Cor}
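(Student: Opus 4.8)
The plan is to deduce both assertions formally from the results already in place, using the two-out-of-three property of equivalences of categories: whenever two of the three functors $F$, $G$, $G\circ F$ are equivalences, so is the third. No new analytic input is needed, since the substantive work has already been done in Theorems \ref{main1} and \ref{restrictionsisos}.

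First I would treat \eqref{s2}. The key observation is simply that the restriction functor \eqref{s2} coincides with the functor $(-)_{r,m}$ entering the definition \eqref{4}, so that $P_{r,m} = (-)_{r,m}\circ P$ by construction. By Theorem \ref{main1}, both $P$ (the functor \eqref{1}) and $P_{r,m}$ (the functor \eqref{4}) are equivalences of categories. Letting $Q$ be a quasi-inverse of $P$ (furnished by Proposition \ref{alreadyproven}), we obtain a natural isomorphism $(-)_{r,m}\cong P_{r,m}\circ Q$, exhibiting \eqref{s2} as a composite of equivalences; hence \eqref{s2} is an equivalence. Alternatively, this is the instance of two-out-of-three applied to the factorization $P_{r,m}=(-)_{r,m}\circ P$.

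Next I would handle \eqref{s1}. As already noted in the proof of Proposition \ref{faithfulfunctors}, the functor \eqref{s2} factors as the composite \eqref{s1}$\circ$\eqref{r1}. By Theorem \ref{restrictionsisos} the functor \eqref{r1} is an \emph{isomorphism} of categories, in particular an equivalence. Since the composite \eqref{s1}$\circ$\eqref{r1}$=$\eqref{s2} was just shown to be an equivalence, and \eqref{r1} is an equivalence, the two-out-of-three property forces \eqref{s1} to be an equivalence as well. (Concretely, \eqref{s1}$\cong$\eqref{s2}$\circ$\eqref{r1}$^{-1}$, again a composite of equivalences.)

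I do not expect any genuine obstacle: the corollary is a purely formal consequence of the factorization identities $P_{r,m}=(-)_{r,m}\circ P$ and \eqref{s2}$=$\eqref{s1}$\circ$\eqref{r1} together with Theorems \ref{main1} and \ref{restrictionsisos}. The only point demanding care is bookkeeping — verifying that these restriction functors compose exactly as claimed and that \eqref{s2} is literally $(-)_{r,m}$ — but this is immediate from the definitions of the various restriction operations. Note also that full faithfulness of \eqref{s1} and \eqref{s2} was already recorded in Proposition \ref{faithfulfunctors}, so in practice only essential surjectivity needs to be gleaned from the factorizations, which the two-out-of-three argument supplies automatically.
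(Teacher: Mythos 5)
Your proposal is correct and takes essentially the same (purely formal) route the paper intends: the corollary is stated without proof precisely because it follows from the factorizations $P_{r,m}=(-)_{r,m}\circ P$ and \eqref{s2}$\,=\,$\eqref{s1}$\,\circ\,$\eqref{r1}, combined with Theorem \ref{main1}, Theorem \ref{restrictionsisos} and Proposition \ref{faithfulfunctors}. Your two-out-of-three bookkeeping, including the identification of \eqref{s2} with $(-)_{r,m}$ and the use of the genuine inverse of the isomorphism \eqref{r1}, is accurate.
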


We can also extend \cite{DR25a}*{Theorem 3.10}.
The key idea behind the next result is that under the equivalences \eqref{1}, \eqref{2}, \eqref{3} and \eqref{4}, an equivalence of the appropriate module categories corresponds exactly with a $\G$-$A$-$B$-Morita correspondence.
\textbf{}
\begin{Prop}\label{Moritaeq}
The following statements are equivalent:
\begin{enumerate}\setlength\itemsep{-0.5em}
    \item $(A, \alpha)\sim_\G (B, \beta)$.
    \item There exist $\mcG\in \Corr^\G(A,B)$ and unitary isomorphisms $\mcG \boxtimes_B \overline{\mcG}\cong L^2(A)$ and $\overline{\mcG}\boxtimes_A \mcG\cong L^2(B)$ of equivariant correspondences. 
    \item $\Rep^\G(A)$ and $\Rep^\G(B)$ are equivalent as $\Rep(\G)$-$W^*$-module categories.
    \item $\Rep^\G(A)$ and $\Rep^\G(B)$ are equivalent as $\Rep(L^\infty(\check{\G}))$-$W^*$-module categories.
 \item $\Rep(A^\rtimes)$ and $\Rep(B^\rtimes)$ are equivalent as $\Rep(\G)$- $W^*$-module categories.
    \item $\Rep(A^\rtimes)$ and $\Rep(B^\rtimes)$ are equivalent as $\Rep(L^\infty(\check{\G}))$-$W^*$-module categories.
     \end{enumerate}
\end{Prop}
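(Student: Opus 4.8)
The plan is to treat $(1)\Leftrightarrow(2)$ as the definitional heart of equivariant Morita theory and to derive the four categorical statements $(3)$–$(6)$ from the Eilenberg--Watts equivalences of Theorem \ref{main1}, exploiting the single structural fact that the functors \eqref{1}, \eqref{2}, \eqref{3} and \eqref{4} turn the Connes fusion $\boxtimes$ into composition of module functors. Concretely, I would organise the argument as the cycle
$$(1)\Rightarrow(2)\Rightarrow(3)\Rightarrow(4)\Rightarrow(1),\qquad (2)\Rightarrow(5)\Rightarrow(6)\Rightarrow(1),$$
so that all six statements become equivalent. For $(1)\Leftrightarrow(2)$ I would invoke \cite{DCDR24}: if $(A,\alpha)\sim_\G(B,\beta)$, then a $\G$-$A$-$B$-Morita correspondence $\mcG$ admits a dual $\overline{\mcG}\in\Corr^\G(B,A)$ with $\mcG\boxtimes_B\overline{\mcG}\cong L^2(A)$ and $\overline{\mcG}\boxtimes_A\mcG\cong L^2(B)$ (\cite{DCDR24}*{Definition 5.1, Proposition 5.10}), giving $(2)$; conversely, if $(2)$ holds then $\mcG$ is invertible for $\boxtimes$, and invertibility forces $\pi_\mcG,\rho_\mcG$ to be faithful with $\pi_\mcG(A)'=\rho_\mcG(B)$, so that $\mcG$ is a Morita correspondence and $(1)$ holds.

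The one point requiring genuine verification is the compatibility of $P$ with composition. Using the associativity of the Connes fusion tensor product together with the unit isomorphism $L^2(A)\boxtimes_A\mcH\cong\mcH$ (the equivariant analogue of the computation $QP(\mcG)=\mcG\boxtimes_B L^2(B)\cong\mcG$ in the proof of Theorem \ref{nonequivariant}), for $\mcG\in\Corr^\G(A,B)$ and $\mcK\in\Corr^\G(B,C)$ one obtains natural unitaries
$$F_\mcG\circ F_\mcK\;\cong\;F_{\mcG\boxtimes_B\mcK},\qquad F_{L^2(A)}\cong\id_{\Rep^\G(A)},$$
and likewise for the three restricted variants $P_r,P_m,P_{r,m}$. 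The main obstacle I anticipate is checking that these isomorphisms are morphisms of module functors, i.e.\ that the associativity unitary for $\boxtimes$ intertwines the structure unitaries $\{S_{\mcH,\mcK}\}$; this is a diagram chase against \eqref{module functor} but must be carried out with care to keep track of all module-categorical data.

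Granting this, the forward implications are formal. From $(2)$, the functors $F_\mcG\in\Fun_{\Rep(\G)}(\Rep^\G(B),\Rep^\G(A))$ and $F_{\overline{\mcG}}\in\Fun_{\Rep(\G)}(\Rep^\G(A),\Rep^\G(B))$ are mutually quasi-inverse as module functors, since $F_\mcG\circ F_{\overline{\mcG}}\cong F_{L^2(A)}\cong\id$ and $F_{\overline{\mcG}}\circ F_\mcG\cong F_{L^2(B)}\cong\id$; this yields $(3)$. Applying instead $P_r$ (which lands in $\Fun_{\Rep(\G)}(\Rep(B^\rtimes),\Rep(A^\rtimes))$) to the restrictions $(F_\mcG)_r,(F_{\overline{\mcG}})_r$ yields $(5)$ by the same reasoning. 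Restricting the module action along $\Rep(L^\infty(\check{\G}))\subseteq\Rep(\G)$ then gives $(3)\Rightarrow(4)$ and $(5)\Rightarrow(6)$ at once.

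For the return implications $(4)\Rightarrow(1)$ and $(6)\Rightarrow(1)$ I would use that $P_m$ and $P_{r,m}$ are equivalences (Theorem \ref{main1}). Given a module equivalence realising $(4)$, with mutually quasi-inverse module functors $F\colon\Rep^\G(B)\to\Rep^\G(A)$ and $G\colon\Rep^\G(A)\to\Rep^\G(B)$, the essential surjectivity of $P_m^{A,B}$ and $P_m^{B,A}$ produces $\mcG\in\Corr^\G(A,B)$ and $\mcK\in\Corr^\G(B,A)$ with $F\cong F_\mcG$ and $G\cong F_\mcK$. The composition law above then gives $F_{\mcG\boxtimes_B\mcK}\cong F_\mcG\circ F_\mcK\cong\id\cong F_{L^2(A)}$ and similarly $F_{\mcK\boxtimes_A\mcG}\cong F_{L^2(B)}$, so that the full faithfulness of $P_m^{A,A}$ and $P_m^{B,B}$ (which reflects isomorphisms) forces $\mcG\boxtimes_B\mcK\cong L^2(A)$ and $\mcK\boxtimes_A\mcG\cong L^2(B)$. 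Thus $\mcG$ is invertible, hence a Morita correspondence by the argument used for $(2)\Rightarrow(1)$, and $(1)$ follows. The implication $(6)\Rightarrow(1)$ is identical, with $P_{r,m}$ in place of $P_m$ and the pair $\Rep(B^\rtimes),\Rep(A^\rtimes)$ in place of $\Rep^\G(B),\Rep^\G(A)$.
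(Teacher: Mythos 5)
Your proposal is correct, but it closes the loop by a genuinely different route than the paper. The paper proves $(1)\Rightarrow(2)\Rightarrow(3)$, obtains $(3)\Rightarrow(4)\Rightarrow(6)$ and $(3)\Rightarrow(5)\Rightarrow(6)$ for free by restriction, and concentrates all the work in the single implication $(6)\Rightarrow(1)$ — which it proves \emph{without} Theorem \ref{main1}: after reducing to integrable $(B,\beta)$ via Lemma \ref{moritaintegrable}, it takes $\mcG_F=F(L^2(B))$ using only the construction $Q_{r,m}$ of Proposition \ref{integrablequasi}, and then verifies directly, following the proof of \cite{DR25a}*{Theorem 3.10}, that $\mcG_F$ is a Morita correspondence. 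You instead run both return implications $(4)\Rightarrow(1)$ and $(6)\Rightarrow(1)$ through the full strength of Theorem \ref{main1} (essential surjectivity and full faithfulness of $P_m$ and $P_{r,m}$), together with the monoidality law $F_{\mcG\boxtimes_B\mcK}\cong F_\mcG\circ F_\mcK$, $F_{L^2(A)}\cong\id$, and the principle that an invertible equivariant correspondence is a Morita correspondence. The monoidality law is true and is tacitly used by the paper as well (in $(2)\Rightarrow(3)$ and inside the proof of Theorem \ref{main1}), so your flagging of the module-functor compatibility check is apt; likewise your use of $P_r$ for $(2)\Rightarrow(5)$ is legitimate since $\Rep(L^\infty(\check{\G}))$-module functors preserve the crossed-product subcategories. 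The one step you should not leave as a bare assertion is that invertibility forces $\pi_\mcG,\rho_\mcG$ faithful with $\pi_\mcG(A)'=\rho_\mcG(B)$: this is exactly the content the paper outsources to \cite{DR25a}*{Theorem 3.10}, and in your scheme it carries the whole weight of $(2)\Rightarrow(1)$, $(4)\Rightarrow(1)$ and $(6)\Rightarrow(1)$. It does follow from the non-equivariant Eilenberg--Watts theorem (Theorem \ref{nonequivariant}): invertibility makes $F_\mcG\colon\Rep(B)\to\Rep(A)$ (forgetting equivariance) an equivalence, hence fully faithful, so $x\mapsto 1\boxtimes_B x$ is a bijection of ${}_B\mathscr{L}(L^2(B))=\rho_B(B)$ onto ${}_A\mathscr{L}(\mcG\boxtimes_B L^2(B))\cong{}_A\mathscr{L}(\mcG)=\pi_\mcG(A)'$ carrying $\rho_B(b)$ to $\rho_\mcG(b)$, which gives faithfulness of $\rho_\mcG$ and the commutant condition, and the same argument applied to $\overline{\mcG}$ gives faithfulness of $\pi_\mcG$. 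With that supplied, your argument is a valid and more uniform alternative — it additionally yields the direct implication $(2)\Rightarrow(1)$ — at the price of invoking the main theorem, whereas the paper's proof of this proposition is lighter, needing only Lemma \ref{moritaintegrable} and Proposition \ref{integrablequasi} (note that the integrability reduction you avoid here is not really avoided: it sits inside the proof of Theorem \ref{main1} on which you rely).
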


\begin{proof} The implication $(1)\implies (2)$ was established in \cite{DCDR24}*{Proposition 5.10}. The implication $(2)\implies (3)$ follows by considering the normal $\Rep(\G)$-module $*$-functors
$$F_\mcG: \Rep^\G(B)\to \Rep^\G(A), \quad F_{\overline{\mcG}}: \Rep^\G(A)\to \Rep^\G(B).$$

The implications $(3)\implies (4)\implies  (6)$ and $(3)\implies (5)\implies (6)$ are trivial. It remains to prove $(6)\implies (1)$.
Assume that $(6)$ holds and choose an equivalence 
$F: \Rep(B^\rtimes)\to \Rep(A^\rtimes)$ of $\Rep(L^\infty(\check{\G}))$-$W^*$-module categories. By Lemma \ref{moritaintegrable}, we may assume that $(B, \beta)$ is integrable. Consider the associated $\G$-$A$-$B$-correspondence $\mcG_F= F(L^2(B))$ constructed in Proposition \ref{integrablequasi}. It then follows exactly as in the proof of \cite{DR25a}*{Theorem 3.10} that $\mcG_F$ is a $\G$-$A$-$B$-Morita correspondence. 
\end{proof}

\section{Equivariant Eilenberg-Watts theorem - \texorpdfstring{$\Rep(\hat{\G})$}{TEXT}-module version}\label{Section 4}

Throughout this section, we fix a locally compact quantum group $\G$ and two (right) $\G$-$W^*$-algebras $(A, \alpha)$ and $(B, \beta)$. We will endow the $W^*$-category $\Rep(\hat{\G})$ with the tensor product
$$(\mcH, U_\mcH)\obot (\mcK, U_\mcK):= (\mcH \otimes \mcK, U_{\mcK,23}U_{\mcH, 13}).$$
The assignment
$$\Rep(L^\infty(\G))\to \Rep(\hat{\G}): (\mcH, \pi)\mapsto (\mcH, (\pi\otimes \id)(\hat{W}_{21}))$$
then defines a fully faithful normal $*$-functor that is the identity on morphism spaces, allowing us to view $\Rep(L^\infty(\G))$ as an ideal inside $(\Rep(\hat{\G}), \obot)$ (cfr.\ Example \ref{ideal property}).

If $(\mcH, U)\in \Rep(\G)$ and $\mcK \in \Rep(\hat{\G})$ with associated non-degenerate $*$-homomorphism $\phi_\mcK: C_0^u(\G)\to B(\mcK)$, we will employ the notation
$$\uU^\mcK:= (\id \otimes \phi_\mcK)(\uU) = (\phi_\mcK \otimes \phi_\mcH)(\WW)_{21}\in B(\mcH \otimes \mcK).$$
For example, given a $\G$-$W^*$-algebra $(A, \alpha)$ and $\mcK \in \Rep(\hat{\G})$, we have the associated unitary implementation $U_\alpha \in \Rep(\G)$ and we will write $\uU_\alpha^\mcK = (\id \otimes \phi_\mcK)(\uU_\alpha) \in B(L^2(A)\otimes \mcK)$.

\subsection{Module category associated to dynamical system.}\label{4.2}

Let $(A, \alpha)$ be a right $\G$-$W^*$-algebra. As in \cite{DCK24}*{Section 4}, there exists a unique unital normal injective $*$-homomorphism $\alpha^u: A \to A \ovot C_0^u(\G)^{**}$ such that for every $(\mcH, \pi, U)\in \Rep^\G(A)= \Corr^\G(A,\C)$ (where $\pi$ is not necessarily faithful), we have
\begin{equation}\label{implementinguniversalcoaction}
    (\pi\otimes \id)(\alpha^u(a))= \uU(\pi(a)\otimes 1)\uU^*, \quad a \in A,
\end{equation}
where we view $\uU\in M(\mathcal{K}(\mcH)\otimes C_0^u(\G))\subseteq B(\mcH)\ovot C_0^u(\G)^{**}$.

On the other hand, considering the universal version $R_u: C_0^u(\G)\to C_0^u(\G)$ of the unitary antipode uniquely determined by $(R_u\otimes \hat{R})(\Ww)= \Ww$ \cite{Kus01}*{Proposition 7.2}, and considering its unique normal extension $\tilde{R}_u: C_0^u(\G)^{**}\to C_0^u(\G)^{**}$, it follows similarly as in \cite{DCK24}*{Lemma 4.6} that for every $(\mcH, \rho, U)\in \Corr^\G(\C,B)$, we have that
\begin{equation}\label{implementinguniversalcoaction1}
    (\rho\otimes \tilde{R}_u)(\beta^u(b)) = \uU^*(\rho(b)\otimes 1)\uU, \quad b\in B.
\end{equation}

Note also that (cfr.\ \cite{DCK24}*{Lemma 4.7})
\begin{equation}\label{DCK}
    (\alpha\otimes \id)\circ \alpha^u = (\id \otimes \Delta^{r,u,\operatorname{VN}})\circ \alpha,
\end{equation}
where
$\Delta^{r,u, \operatorname{VN}}: L^\infty(\G)\to L^\infty(\G)\ovot C_0^u(\G)^{**}$ is given by $\Delta^{r,u, \operatorname{VN}}(x)= \vV(x\otimes 1)\vV^*$.

By the universal property of the bidual, the non-degenerate $*$-homomorphism $$\Delta^u: C_0^u(\G)\to M(C_0^u(\G)\otimes C_0^u(\G))$$ can be uniquely extended to a unital normal $*$-homomorphism $$\tilde{\Delta}^u: C_0^u(\G)^{**}\to C_0^u(\G)^{**}\ovot C_0^u(\G)^{**}.$$ Using this observation, we can make sense of the coaction condition for $\alpha^u$:

\begin{Lem}\label{coactionproperty}
    The rectangle
    $$
\begin{tikzcd}
A \arrow[d, "{\alpha^{u}}"'] \arrow[rr, "{\alpha^{u}}"] &  & A\ovot C_0^u(\G)^{**} \arrow[d, "{\alpha^{u}\otimes \id}"] \\
A\ovot C_0^u(\G)^{**} \arrow[rr, "\id \otimes \tilde{\Delta}^u"']    &  & A \ovot C_0^u(\G)^{**}\ovot C_0^u(\G)^{**}                    
\end{tikzcd}$$
commutes.
\end{Lem}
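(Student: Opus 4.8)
The plan is to verify the identity $(\alpha^u \otimes \id)\circ \alpha^u = (\id \otimes \tilde{\Delta}^u)\circ \alpha^u$ by testing it against the single faithful representation $L^2(A) = (L^2(A), \pi_A, U_\alpha)\in \Rep^\G(A)$. Both composites are normal unital $*$-homomorphisms $A \to A\ovot C_0^u(\G)^{**}\ovot C_0^u(\G)^{**}$, and since $\pi_A$ is faithful and normal, the map $\pi_A \otimes \id \otimes \id$ is injective on $A\ovot C_0^u(\G)^{**}\ovot C_0^u(\G)^{**}$. Hence it suffices to show that the two composites agree after applying $\pi_A \otimes \id \otimes \id$. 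The whole computation is then driven by the implementation relation \eqref{implementinguniversalcoaction}, which for this object reads $(\pi_A\otimes \id)(\alpha^u(a)) = \uU_\alpha(\pi_A(a)\otimes 1)\uU_\alpha^*$ for all $a\in A$.

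For the right-hand side I would slide the normal $*$-homomorphism $\id \otimes \tilde{\Delta}^u$ past $\pi_A$ and then apply \eqref{implementinguniversalcoaction}, obtaining
\[
(\pi_A\otimes \id \otimes \id)(\id \otimes \tilde{\Delta}^u)(\alpha^u(a)) = (\id \otimes \tilde{\Delta}^u)\bigl(\uU_\alpha(\pi_A(a)\otimes 1)\uU_\alpha^*\bigr).
\]
The key input here is that $\uU_\alpha$ is a corepresentation of $(C_0^u(\G), \Delta^u)$, so that $(\id \otimes \tilde{\Delta}^u)(\uU_\alpha) = \uU_{\alpha, 12}\uU_{\alpha, 13}$; since $\tilde{\Delta}^u$ is unital the middle tensor factor is unaffected, and the right-hand side reduces to $\uU_{\alpha, 12}\uU_{\alpha, 13}(\pi_A(a)\otimes 1 \otimes 1)\uU_{\alpha, 13}^*\uU_{\alpha, 12}^*$. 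For the left-hand side I would use the identity $(\pi_A\otimes \id \otimes \id)\circ (\alpha^u\otimes \id) = \bigl((\pi_A\otimes \id)\circ \alpha^u\bigr)\otimes \id$ and apply \eqref{implementinguniversalcoaction} twice: conjugating the outer copy of $\alpha^u$ contributes $\uU_{\alpha, 12}$ and conjugating the inner copy contributes $\uU_{\alpha, 13}$, so that after tracking which tensor leg each factor occupies one again arrives at $\uU_{\alpha, 12}\uU_{\alpha, 13}(\pi_A(a)\otimes 1\otimes 1)\uU_{\alpha, 13}^*\uU_{\alpha, 12}^*$. The two sides coincide, and injectivity of $\pi_A\otimes \id \otimes \id$ closes the argument.

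The main obstacle is justifying the corepresentation identity at the level of the bidual, i.e.\ promoting the $C^*$-level equation $(\id \otimes \Delta^u)(\uU_\alpha) = \uU_{\alpha, 12}\uU_{\alpha, 13}$, which is available directly for the multiplier $\uU_\alpha \in M(\mathcal{K}(L^2(A))\otimes C_0^u(\G))$, to the identity $(\id \otimes \tilde{\Delta}^u)(\uU_\alpha) = \uU_{\alpha, 12}\uU_{\alpha, 13}$ inside $B(L^2(A))\ovot C_0^u(\G)^{**}\ovot C_0^u(\G)^{**}$. As $\tilde{\Delta}^u$ is the normal extension of $\Delta^u$ and restricts to $\Delta^u$ on $C_0^u(\G)$, this is handled by a strict-to-normal approximation argument, but it is the step that genuinely requires care. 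The only other potential pitfall is the leg-numbering bookkeeping forced by the relabeling inherent in $\alpha^u\otimes \id$, which is routine once one fixes conventions.
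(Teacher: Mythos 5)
Your proof is correct, but it takes a genuinely different route from the paper's. You represent everything faithfully on the standard form: applying $\pi_A\otimes\id\otimes\id$ and using the implementation relation \eqref{implementinguniversalcoaction} for the object $(L^2(A),\pi_A,U_\alpha)\in\Rep^\G(A)$, both composites reduce to $\uU_{\alpha,12}\uU_{\alpha,13}(\pi_A(a)\otimes 1\otimes 1)\uU_{\alpha,13}^*\uU_{\alpha,12}^*$, the crux being the bidual-level corepresentation identity $(\id\otimes\tilde{\Delta}^u)(\uU_\alpha)=\uU_{\alpha,12}\uU_{\alpha,13}$. That step, which you rightly flag as the one requiring care, does go through: a bounded net in $\mathcal{K}(L^2(A))\otimes C_0^u(\G)$ converging strictly to a multiplier also converges $\sigma$-weakly in the bidual (by Cohen factorization, functionals of the form $c\cdot\omega\cdot d$ are norm-dense in the dual), so the normal extension $\id\otimes\tilde{\Delta}^u$ agrees with the strict extension of $\id\otimes\Delta^u$ on $M(\mathcal{K}(L^2(A))\otimes C_0^u(\G))$, and the $C^*$-level corepresentation property of $\uU_\alpha$ stated in the preliminaries passes to $B(L^2(A))\ovot C_0^u(\G)^{**}\ovot C_0^u(\G)^{**}$. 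The paper instead argues without representing $A$ at all: it invokes the compatibility \eqref{DCK}, $(\alpha\otimes\id)\circ\alpha^u=(\id\otimes\Delta^{r,u,\operatorname{VN}})\circ\alpha$, from \cite{DCK24}*{Lemma 4.7}, combines it with the relation $\Delta^{r,u,\operatorname{VN}}\circ\tilde{\pi}_\G=(\tilde{\pi}_\G\otimes\id)\circ\tilde{\Delta}^u$ to derive the mixed coassociativity $(\id\otimes\tilde{\Delta}^u)\Delta^{r,u,\operatorname{VN}}=(\Delta^{r,u,\operatorname{VN}}\otimes\id)\Delta^{r,u,\operatorname{VN}}$ on $L^\infty(\G)$, and then cancels the injective normal map $\alpha\otimes\id\otimes\id$ --- exactly parallel to your cancellation of $\pi_A\otimes\id\otimes\id$. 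What the paper's route buys is that every map involved ($\tilde{\Delta}^u$, $\tilde{\pi}_\G$, $\Delta^{r,u,\operatorname{VN}}$) is already normal, so no strict-versus-normal extension issues arise, at the price of relying on the external compatibility lemma from \cite{DCK24}; your route is more concrete and self-contained, resting only on the defining property \eqref{implementinguniversalcoaction} and the universal corepresentation $\uU_\alpha$, at the price of the multiplier-to-bidual compatibility argument you correctly isolate as the essential obstacle.
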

\begin{proof}
Let $\tilde{\pi}_\G: C_0^u(\G)^{**}\to L^\infty(\G)$ be the unique unital, normal $*$-homomorphism extending $\pi_\G: C_0^u(\G)\to C_0^r(\G)$. We then note that we have the commutation relation
$$\Delta^{r,u,\operatorname{VN}}\circ \tilde{\pi}_\G= (\tilde{\pi}_\G\otimes \id) \circ \tilde{\Delta}^u.$$
Indeed, this commutation relation follows from the fact that
$\Delta^{r,u, \operatorname{VN}}(\pi_\G(x)) = (\pi_\G \otimes \id)\Delta^u(x)$ for $x\in C_0^u(\G)$ and the $\sigma$-weak density of $C_0^u(\G)$ inside $C_0^u(\G)^{**}$. Consequently, we can calculate for $x\in C_0^u(\G)^{**}$ that
\begin{align*}
    (\id \otimes \tilde{\Delta}^u)\Delta^{r,u, \operatorname{VN}}(\tilde{\pi}_\G(x))&= (\id \otimes \tilde{\Delta}^u)(\tilde{\pi}_\G \otimes \id) \tilde{\Delta}^u(x)\\
    &= (\tilde{\pi}_\G \otimes \id \otimes \id)(\tilde{\Delta}^u\otimes \id)\tilde{\Delta}^u(x)\\
    &= (\Delta^{r,u, \operatorname{VN}} \otimes \id)(\tilde{\pi}_\G \otimes \id)\tilde{\Delta}^u(x)\\
    &= (\Delta^{r,u, \operatorname{VN}} \otimes \id) \Delta^{r,u, \operatorname{VN}}(\tilde{\pi}_\G(x)),
\end{align*}
so it follows that
$$(\id \otimes \tilde{\Delta}^u) \Delta^{r,u,\operatorname{VN}}= (\Delta^{r,u, \operatorname{VN}}\otimes \id)\Delta^{r,u, \operatorname{VN}}.$$
As a result, making use of \eqref{DCK}, we find
\begin{align*}
    (\alpha\otimes \id \otimes \id)(\alpha^u\otimes \id)\alpha^u &= (\id \otimes \Delta^{r,u,\operatorname{VN}}\otimes \id)(\alpha\otimes \id)\alpha^u\\
    &=(\id \otimes \Delta^{r,u,\operatorname{VN}}\otimes \id)(\id \otimes \Delta^{r,u,\operatorname{VN}})\alpha\\
    &= (\id \otimes \id \otimes \tilde{\Delta}^u) (\id \otimes \Delta^{r,u,\operatorname{VN}})\alpha\\
    &= (\id \otimes \id \otimes \tilde{\Delta}^u) (\alpha\otimes \id) \alpha^u\\
    &= (\alpha\otimes \id\otimes \id)(\id \otimes \tilde{\Delta}^u)\alpha^u.
\end{align*}
Since $\alpha \otimes \id \otimes \id$ is injective, we conclude that $(\alpha^u\otimes \id)\alpha^u = (\id \otimes\tilde{\Delta}^u)\alpha^u$.
\end{proof}
We can now define the $W^*$-module category $\Rep(A)\curvearrowleft \Rep(\hat{\G})$. Given $\mcK \in \Rep(\hat{\G})$, consider the associated non-degenerate $*$-representation $\phi_\mcK: C_0^u(\G)\to B(\mcK)$. By the universal property of the bidual, it extends uniquely to a normal, unital $*$-representation $\tilde{\phi}_\mcK: C_0^u(\G)^{**}\to B(\mcK)$. If also $\mcH\in \Rep(A)$, we define the normal, unital $*$-representation
$$\pi_{\mcH \lhd \mcK}: A \to B(\mcH\otimes \mcK): a \mapsto (\pi_\mcH \otimes \tilde{\phi}_\mcK)(\alpha^u(a)).$$
We then obtain the object $\mcH \lhd  \mcK:= (\mcH \otimes \mcK , \pi_{\mcH \lhd \mcK})\in \Rep(A)$. Moreover, it follows from Lemma \ref{coactionproperty} that 
$$\mcH \lhd (\mcK\obot \mcK') = (\mcH \lhd \mcK)\lhd \mcK',\quad \mcH\in \Rep(A), \quad \mcK, \mcK'\in \Rep(\hat{\G}).$$
If $\mcH, \mcH'\in \Rep(A)$  and $\mcK, \mcK'\in \Rep(\hat{\G})$, $x\in {}_A\mathscr{L}(\mcH, \mcH')$ and $y\in \mathscr{L}^{\hat{\G}}(\mcK, \mcK')$, it is easily verified that $x\lhd y := x\otimes y \in {}_A\mathscr{L}(\mcH \lhd \mcK, \mcH'\lhd \mcK')$. Consequently, we obtain
the $W^*$-module category $\Rep(A)\curvearrowleft \Rep(\hat{\G})$. 

By restriction, we then also obtain the $W^*$-module category $\Rep(A)\curvearrowleft \Rep(L^\infty(\G)).$ 

\begin{Rem}\label{algebraicpicture} Let $H$ be a bialgebra and let $(A, \alpha)$ be a (right) $H$-comodule algebra, i.e. $A$ is a unital algebra and $\alpha: A \to A \odot H: a \mapsto a_{(0)}\otimes a_{(1)}$ is a unital algebra homomorphism satisfying $(\id \odot \Delta)\alpha = (\alpha \odot \id)\alpha$ and $(\id \odot \epsilon)\alpha = \id$ . Given a left $A$-module $V$ and a left $H$-module $W$, the tensor product $V \odot W$ becomes a left $A$-module via 
    $$a\rhd (v \otimes w) := (a_{(0)}\rhd v)\otimes (a_{(1)}\rhd w), \quad a\in A, \quad v\in V, \quad w \in W.$$
The construction of the $W^*$-module category $\Rep(A)\curvearrowleft \Rep(\hat{\G})$ is an analytic version of this well-known algebraic construction. 
\end{Rem}

\begin{Rem} If $\mcH \in \Rep(A)$ and $\mcK\in \Rep(\hat{\G})$, there exists a Hilbert space $\mcG$ together with an isometry $\kappa\in {}_A\mathscr{L}(\mcH, \mcG\otimes L^2(A))$, where $A \curvearrowright \mcG\otimes L^2(A)$ via $a \mapsto 1\otimes \pi_A(a)$. We then have
\begin{equation}\label{alg}
    \pi_{\mcH\lhd \mcK}(a)= (\kappa^*\otimes 1) \uU_{\alpha,23}^{\mcK} (1\otimes \pi_A(a)\otimes 1)\uU_{\alpha,23}^{\mcK,*}(\kappa\otimes 1), \quad a \in A.
\end{equation}
In fact, the formula \eqref{alg} can be used to \emph{define} the action $\Rep(A)\curvearrowleft \Rep(\hat{\G})$ (using this as a starting point, it takes some work to show that it does not depend on the choice of Hilbert space $\mcG$ and the choice of isometry $\kappa$). The advantage of this approach is that it avoids the use of the bidual von Neumann algebra $C_0^u(\G)^{**}$ and the `coaction' $\alpha^u$, but the downside is that the analogy with the algebraic picture (Remark \ref{algebraicpicture}) becomes less transparent, and the proofs become harder to read.
\begin{Exa}
    \begin{enumerate}\setlength\itemsep{-0.5em}
    \item $\pi_{L^2(A)\lhd (L^2(\G), \hat{W}_{21})}(a)= U_\alpha(\pi_A(a)\otimes 1)U_\alpha^*= (\pi_A \otimes \id)\alpha(a)$ for all $a\in A$.
        \item If $\mcH\in \Rep(A)$ and if $(\mcK, \mathbb{I})$ is the trivial $\hat{\G}$-representation, then
$\pi_{\mcH\lhd \mcK}(a)= \pi_\mcH(a)\otimes 1$ for all $a \in A.$
\item  If $A\curvearrowleft \G$ trivially, $\mcH\in \Rep(A)$ and $\mcK \in \Rep(\hat{\G})$, then
$\pi_{\mcH\lhd \mcK}(a)= \pi_{\mcH}(a)\otimes 1$ for all $a\in A$.
    \end{enumerate}
\end{Exa}

\end{Rem}

\subsection{From equivariant correspondence to module functor.}

\begin{Prop}
    For every triple $(\mathcal{G}, \mathcal{H}, \mcK)\in \Corr^\G(A,B)\times \Rep(B)\times \Rep(\hat{\G})$, there is a canonical unitary isomorphism
    $$S_{\mcG, \mcH, \mcK}\in {}_A\mathscr{L} (\mathcal{G}\boxtimes_B(\mcH \lhd \mcK), (\mcG \boxtimes_B \mcH)\lhd \mcK).$$
    These unitaries are natural in $\mcG, \mcH, \mcK$. Fixing $\mathcal{G}\in \Corr^\G(A,B)$, we have that
    $$\hat{P}(\mcG):= (F_\mcG, \{S_{\mcG, \mcH, \mcK}\}_{\mcH \in \Rep(B), \mcK \in \Rep(\hat{\G})})\in \Fun_{\Rep(\hat{\G})}(\Rep(B), \Rep(A)).$$
Given $y \in {}_A\mathscr{L}^\G_B(\mcG, \mcG')$, we have
    $$\{\hat{P}(y)_\mcH:= y\boxtimes_B \id_\mcH: \mcG\boxtimes_B \mcH\to \mcG'\boxtimes_B \mcH: x\otimes_B \xi \mapsto yx \otimes_B \xi\}_{\mcH\in \Rep(B)}\in \Nat_{\Rep(\hat{\G})}(F_\mcG, F_{\mcG'}).$$
    In this way, we obtain the functor
    $$\hat{P}: \Corr^\G(A,B)\to \Fun_{\Rep(\hat{\G})}(\Rep(B), \Rep(A)).$$
\end{Prop}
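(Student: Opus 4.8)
The plan is to build the module structure of $F_\mcG = \mcG\boxtimes_B-$ one variable at a time and to reduce every claim to its value on the generator $L^2(B)$, using Rieffel's intertwiner bijection \eqref{Rieffelbijection}. Forgetting the $\G$-structure of $\mcG$, the non-equivariant Eilenberg-Watts theorem (Theorem \ref{nonequivariant}) already tells us that $F_\mcG\colon\Rep(B)\to\Rep(A)$ is a normal $*$-functor, so the multiplicity unitaries \eqref{canonicalisomorphismmultiplicity} are available for it. For fixed $\mcK\in\Rep(\hat\G)$, the two assignments
$$\Phi_1:=F_\mcG(-\lhd\mcK),\qquad \Phi_2:=F_\mcG(-)\lhd\mcK$$
are normal $*$-functors $\Rep(B)\to\Rep(A)$ (the functors $-\lhd\mcK$ are normal $*$-functors by the module-category axioms), and the sought unitary $S_{\mcG,-,\mcK}$ is precisely a natural unitary $\Phi_1\Rightarrow\Phi_2$. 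Since $L^2(B)$ generates $\Rep(B)$, \eqref{Rieffelbijection} identifies such natural transformations with $A$-linear operators $\Phi_1(L^2(B))\to\Phi_2(L^2(B))$ that commute with the images under $\Phi_1,\Phi_2$ of ${}_B\mathscr{L}(L^2(B))=\rho_B(B)$; moreover a natural transformation that is unitary at $L^2(B)$ is unitary at every object, by applying the injectivity in \eqref{Rieffelbijection} to $\eta^*\eta$ and $\eta\eta^*$.

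The first key step is to write down the component at the generator. Using the canonical equivariant isomorphism $F_\mcG(L^2(B))=\mcG\boxtimes_B L^2(B)\cong\mcG$ one identifies $\Phi_2(L^2(B))$ with $\mcG\lhd\mcK=(\mcG\otimes\mcK,(\pi_\mcG\otimes\tilde\phi_\mcK)\alpha^u)$, whose $A$-action is $\uU_\mcG^\mcK(\pi_\mcG(\cdot)\otimes1)\uU_\mcG^{\mcK,*}$ by \eqref{implementinguniversalcoaction}. On the other side, \eqref{alg} gives $\pi_{L^2(B)\lhd\mcK}(b)=\uU_\beta^\mcK(\pi_B(b)\otimes1)\uU_\beta^{\mcK,*}$, so $\uU_\beta^\mcK$ is a $B$-linear unitary from $L^2(B)\otimes\mcK$ (with $B$ acting only on the first leg) onto $L^2(B)\lhd\mcK$. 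I therefore set
$$S_{\mcG,L^2(B),\mcK}:=\uU_\mcG^\mcK\circ\big(C^{F_\mcG}_{L^2(B),\mcK}\big)^{-1}\circ F_\mcG(\uU_\beta^{\mcK,*})\colon\ \Phi_1(L^2(B))\longrightarrow\mcG\otimes\mcK=\Phi_2(L^2(B)),$$
a composite of unitaries. It is $A$-linear, being built from images of morphisms under $F_\mcG$, a multiplicity unitary, and the $A$-linear unitary $\uU_\mcG^\mcK$; the role of the outer factor $\uU_\mcG^\mcK$ is exactly to convert the untwisted action $\pi_\mcG\otimes1$ coming out of the multiplicity isomorphism into the $\alpha^u$-twisted action of $\mcG\lhd\mcK$. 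That $S_{\mcG,L^2(B),\mcK}$ intertwines the two $\rho_B(B)$-actions is then a direct computation: transporting $\Phi_1(\rho_B(b))=1\boxtimes_B(\rho_B(b)\otimes1)$ through $F_\mcG(\uU_\beta^{\mcK,*})$ and the multiplicity isomorphism turns it into $(\rho_\mcG\otimes\tilde\phi_\mcK\tilde R_u)(\beta^u(b))$ on $\mcG\otimes\mcK$, and applying $\id\otimes\tilde\phi_\mcK$ to \eqref{implementinguniversalcoaction1} (for $\mcG$ viewed in $\Corr^\G(\C,B)$) rewrites this as $\uU_\mcG^{\mcK,*}(\rho_\mcG(b)\otimes1)\uU_\mcG^\mcK$, so conjugation by the outer factor $\uU_\mcG^\mcK$ produces exactly $\Phi_2(\rho_B(b))=\rho_\mcG(b)\otimes1$. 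By the previous paragraph this yields the natural unitary $S_{\mcG,-,\mcK}$.

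With the generator components in hand, every remaining property is an equality of natural transformations between normal $*$-functors on $\Rep(B)$, hence by injectivity in \eqref{Rieffelbijection} reduces to an identity of operators on $\Phi_i(L^2(B))=\mcG\otimes\mcK$. Naturality in $\mcK$ reduces to the fact that any $t\in\mathscr{L}^{\hat\G}(\mcK,\mcK')$ intertwines $\phi_\mcK,\phi_{\mcK'}$, hence $(\id\otimes\phi_\mcK)(\uU_\mcG)$ and $(\id\otimes\phi_{\mcK'})(\uU_\mcG)$. Naturality in $\mcG$—which is exactly the assertion that $\hat P(y)=\{y\boxtimes_B1\}$ lies in $\Nat_{\Rep(\hat\G)}(F_\mcG,F_{\mcG'})$, i.e.\ satisfies \eqref{rio}—reduces to the fact that $y\in{}_A\mathscr{L}_B^\G(\mcG,\mcG')$ intertwines $U_\mcG,U_{\mcG'}$ and therefore $\uU_\mcG^\mcK,\uU_{\mcG'}^\mcK$. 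Finally the coherence \eqref{module functor} (associators suppressed, so that it reads $S_{\mcG,\mcH,\mcK\obot\mcK'}=(S_{\mcG,\mcH,\mcK}\otimes1)\circ S_{\mcG,\mcH\lhd\mcK,\mcK'}$) reduces on the generator to the comultiplicativity $(\id\otimes\Delta^u)(\uU_\mcG)=\uU_{\mcG,12}\uU_{\mcG,13}$ of the corepresentation, combined with $\phi_{\mcK\obot\mcK'}=(\phi_\mcK\otimes\phi_{\mcK'})\Delta^u$ and with the coaction property of $\alpha^u$ from Lemma \ref{coactionproperty}, the latter guaranteeing $\mcH\lhd(\mcK\obot\mcK')=(\mcH\lhd\mcK)\lhd\mcK'$ so that both sides act on the same Hilbert space.

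It then remains only to record that $\hat P$ is a functor: $\hat P(y)_\mcH=y\boxtimes_B\id_\mcH$ is a natural transformation $F_\mcG\Rightarrow F_{\mcG'}$ by Theorem \ref{nonequivariant}, it satisfies \eqref{rio} by the naturality-in-$\mcG$ step, and $\hat P(\id)=\id$, $\hat P(y'y)=\hat P(y')\hat P(y)$ already hold at the level of $\boxtimes_B$. I expect the main obstacle to be twofold. Conceptually, the crux is pinning down the correct generator component, i.e.\ recognizing that the $\alpha^u$- and $\beta^u$-twists of the left $A$- and right $B$-actions are reconciled by the single outer factor $\uU_\mcG^\mcK$, with \eqref{implementinguniversalcoaction} and \eqref{implementinguniversalcoaction1} matching the left and right actions respectively (the unbounded-looking $\tilde R_u$ is harmless, since it occurs identically on both sides of the right-action computation). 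Computationally, the hard part will be the coherence identity \eqref{module functor}, whose verification on the generator is a multi-leg manipulation of $\WW$ and $\uU_\mcG$ resting on the comultiplicativity of the corepresentation together with Lemma \ref{coactionproperty}.
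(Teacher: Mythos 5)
Your proposal is correct and is essentially the paper's own proof: your generator component $\uU_{\mcG}^{\mcK}\circ\bigl(C^{F_\mcG}_{L^2(B),\mcK}\bigr)^{-1}\circ F_\mcG(\uU_{\beta}^{\mcK,*})$ agrees, modulo the identification $I_\mcG: \mcG\boxtimes_B L^2(B)\cong \mcG$, with the paper's unitary $(I_\mcG^*\otimes \id)\circ D_{\mcG,\mcK}$ where $D_{\mcG,\mcK}: x\otimes_B(\xi\otimes\eta)\mapsto \uU_{\mcG}^{\mcK}(x\otimes 1)\uU_{\beta}^{\mcK,*}(\xi\otimes\eta)$, whose $\rho_B(B)$-intertwining is likewise deduced from \eqref{implementinguniversalcoaction1}, and the extension to all of $\Rep(B)$ together with naturality in $\mcG$ and $\mcK$ is carried out exactly by your reduction-to-the-generator argument via \eqref{Rieffelbijection}. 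The coherence \eqref{module functor}, which you only sketch but correctly flag as the computational core, is verified in the paper from precisely the ingredients you name: the $B$-linear unitary $\uU_{\beta}^{\mcK}: L^2(B)\otimes\mcK\to L^2(B)\lhd\mcK$ and the multiplicity isomorphisms \eqref{canonicalisomorphismmultiplicity}, \eqref{canmultiiso} (needed because the generator-level identity still involves $S_{\mcG, L^2(B)\lhd\mcK,\mcK'}$, evaluated at a non-generator object), followed by an explicit multi-leg computation using $\uU_{\mcG}^{\mcK\obot\mcK'}=\uU_{\mcG,12}^{\mcK}\uU_{\mcG,13}^{\mcK'}$.
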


\begin{proof}
    Fix $\mathcal{G} \in \Corr^\G(A,B)$ and $\mcK \in \Rep(\hat{\G})$. Consider the unitary isomorphism
$$I_\mcG: \mcG \boxtimes_B L^2(B)\to \mcG: x\otimes_B \xi \mapsto x\xi$$
of $\G$-$A$-$B$-correspondences. Next, we note that there is an $A$-linear unitary
 $$D_{\mcG, \mcK}: \mcG \boxtimes_B (L^2(B)\lhd \mcK)\cong \mcG \lhd \mcK: x\otimes_B (\xi \otimes \eta)\mapsto \uU_{\mcG}^{\mcK}(x\otimes 1)\uU_{\beta}^{\mcK, *}(\xi \otimes \eta).$$ 
 It follows from \eqref{implementinguniversalcoaction1} that
$$D_{\mcG, \mcK}\circ (1\boxtimes_B (\rho_B(b)\otimes 1)) = (\rho_\mcG(b)\otimes 1) \circ D_{\mcG, \mcK}, \quad b \in B.$$
 We then define the $A$-linear unitary
 $$S_{\mcG, \mcK}:= (I_\mcG^*\otimes \id_{\mcK})\circ D_{\mcG, \mcK}: \mcG \boxtimes_B (L^2(B)\lhd \mcK) \cong (\mcG \boxtimes_B L^2(B))\lhd \mcK.$$
For all $b \in B$, we have $S_{\mcG, \mcK}(1\boxtimes_B (\rho_B(b)\otimes 1)) = ((1\boxtimes_B \rho_B(b))\otimes 1) S_{\mcG, \mcK}$. By the bijectivity of \eqref{Rieffelbijection}, there exists a unique unitary natural transformation
$S_{\mcG, -, \mcK}: \mcG \boxtimes_B (-\lhd \mcK)\implies (\mcG \boxtimes_B -)\lhd \mcK$
such that $S_{\mcG, L^2(B), \mcK}= S_{\mcG, \mcK}$. We now show that the unitary isomorphisms
$$S_{\mcG, \mcH, \mcK}: \mcG \boxtimes_B (\mcH \lhd \mcK)\to (\mcG \boxtimes_B \mcH)\lhd \mcK, \quad \mcG \in \Corr^\G(A,B), \quad \mcH \in \Rep(B), \quad \mcK \in \Rep(\hat{\G})$$
are also natural in the variables $\mcG$ and $\mcK$. That is, fixing $\mcG, \mcG'\in \Corr^\G(A,B)$, $\mcH\in \Rep(B), \mcK, \mcK'\in \Rep(\hat{\G})$, $x\in {}_A \mathscr{L}_B^\G(\mcG, \mcG')$ and $y \in \mathscr{L}^{\hat{\G}}(\mcK, \mcK')$, we need to show that the  rectangle
\begin{equation}\label{commdiag}
\begin{tikzcd}
\mcG \boxtimes_B(\mcH\lhd \mcK) \arrow[d, "x\boxtimes_B (1\lhd y)"] \arrow[rr, "{S_{\mcG, \mcH, \mcK}}"] &  & (\mcG \boxtimes_B \mcH)\lhd \mcK \arrow[d, "(x\boxtimes_B 1)\lhd y"] \\
\mcG'\boxtimes_B(\mcH\lhd \mcK') \arrow[rr, "{S_{\mcG', \mcH, \mcK'}}"]                                   &  & (\mcG' \boxtimes_B \mcH)\lhd \mcK'                                   
\end{tikzcd}
\end{equation}
commutes. But a direct computation shows that the rectangle \eqref{commdiag} commutes for the generator $\mathcal{H}= L^2(B)\in \Rep(B)$. Since both compositions in the rectangle \eqref{commdiag} are natural in the variable $\mcH\in \Rep(B)$, it follows from the injectivity of \eqref{Rieffelbijection} that \eqref{commdiag} commutes. 

Fixing $\mathcal{G}\in \Corr^\G(A,B)$, $\mathcal{H}\in \Rep(B)$ and $\mathcal{K}, \mcK'\in \Rep(\hat{\G})$, we show next that the diagram
\begin{equation}\label{diag}
\begin{tikzcd}
\mcG \boxtimes_B (\mcH \lhd (\mcK \obot \mcK'))  \arrow[d, "="', no head] \arrow[rr, "{S_{\mcG, \mcH,\mcK\obot \mcK'}}"] &  & (\mcG \boxtimes_B \mcH)\lhd (\mcK\obot \mcK') \arrow[rr, "=", no head] &  & ((\mcG \boxtimes_B \mcH)\lhd \mcK)\lhd \mcK'                                                \\
\mcG \boxtimes_B ((\mcH \lhd \mcK)\lhd \mcK') \arrow[rrrr, "{S_{\mcG, \mcH\lhd \mcK, \mcK'}}"]                               &  &                                                                          &  & (\mcG \boxtimes_B (\mcH \lhd \mcK))\lhd \mcK' \arrow[u, "{S_{\mcG, \mcH, \mcK}\otimes 1}"']
\end{tikzcd}
\end{equation}
commutes. Again, it suffices to show the commutativity for $\mcH = L^2(B)$. To do this, we start by noting that
$\uU_\beta^\mcK \in {}_B\mathscr{L}(L^2(B)\otimes \mcK, L^2(B)\lhd \mcK),$
where $B \curvearrowright L^2(B)\otimes \mcK$ via $b \mapsto \pi_B(b)\otimes 1$. Consider then the diagram
\begin{equation}\label{diagrams}
\begin{tikzcd}
\mcG \boxtimes_B ((L^2(B) \lhd \mcK) \lhd \mcK') \arrow[rrr, "{S_{\mcG, L^2(B)\lhd \mcK, \mcK'}}"]                                                               &  &  & (\mcG \boxtimes_B (L^2(B) \lhd \mcK)) \lhd \mcK'                                                            \\
\mcG \boxtimes_B ((L^2(B) \otimes \mcK) \lhd \mcK') \arrow[rrr, "{S_{\mcG, L^2(B)\otimes \mcK, \mcK'}}"] \arrow[u, "{1\boxtimes_B (\uU_{\beta}^{\mcK} \otimes 1)}"] &  &  & (\mcG \boxtimes_B (L^2(B) \otimes \mcK)) \lhd \mcK' \arrow[u, "{(1\boxtimes_B \uU_{\beta}^{\mcK})\otimes 1}"'] \\
(\mcG \boxtimes_B (L^2(B)\lhd \mcK'))\otimes \mcK \arrow[u, "C_{L^2(B), \mcK}^{\mcG \boxtimes_B(-\lhd \mcK')}"] \arrow[rrr, "{S_{\mcG,L^2(B), \mcK'}\otimes \id_\mcK}"]                                  &  &  & ((\mcG \boxtimes_B L^2(B))\lhd \mcK')\otimes \mcK \arrow[u, "C_{L^2(B), \mcK}^{(\mcG \boxtimes_B -)\lhd \mcK'}"']                                     
\end{tikzcd}
\end{equation}
where the isomorphisms in the lower rectangle are the canonical isomorphisms from \eqref{canonicalisomorphismmultiplicity}. Here, the upper rectangle commutes by naturality in the middle variable, and the lower rectangle commutes by the commutativity of the diagram \eqref{canmultiiso}. 
It is straightforward to verify that
\begin{align*}
    &C_{L^2(B), \mcK}^{\mcG \boxtimes_B (-\lhd \mcK')}((x\otimes_B (\xi \otimes \eta'))\otimes \eta) = x\otimes_B (\xi \otimes \eta\otimes \eta'),\\
    &C_{L^2(B), \mcK}^{(\mcG\boxtimes_B -)\lhd \mcK'}((x\otimes_B \xi)\otimes \eta'\otimes \eta)= (x\otimes_B (\xi \otimes \eta))\otimes \eta'.
\end{align*}
for every $x \in \mathscr{L}_B(L^2(B), \mcG), \xi \in L^2(B), \eta \in \mcK$ and $\eta' \in \mcK'$. It follows from the commutativity of the diagram \eqref{diagrams} that the commutativity of the diagram \eqref{diag} for $\mcH = L^2(B)$ is equivalent with 
\begin{equation}\label{comp}
      (S_{\mcG, L^2(B), \mcK}\otimes 1)((1\underset{B}{\boxtimes} \uU_{\beta}^{\mcK})\otimes 1)C_{L^2(B), \mcK}^{(\mcG\underset{B}{\boxtimes} -)\lhd \mcK'} (S_{\mcG, L^2(B), \mcK'}\otimes \id_\mcK) C_{L^2(B), \mcK}^{\mcG\underset{B}{\boxtimes} (-\lhd \mcK'),*} (1\underset{B}{\boxtimes} (\uU_{\beta}^{\mcK,*}\otimes 1))=S_{\mcG, L^2(B), \mcK\obot \mcK'}.
\end{equation}

Let $x\in \mathscr{L}_B(L^2(B), \mcG), y \in B(\mcK'), \xi \in L^2(B), \eta\in \mcK$ and $\eta' \in \mcK'$. Then
\begin{align*}
    &(I_\mcG \otimes 1\otimes 1)C_{L^2(B), \mcK}^{(\mcG\boxtimes_B-)\lhd \mcK',*}((1\boxtimes_B \uU_{\beta}^{ \mcK,*})\otimes 1)(D_{\mcG, \mcK}^*\otimes 1)(\uU_{\mcG, 12}^{\mcK}(x\otimes 1 \otimes y)\uU_{\beta, 12}^{\mcK,*}(\xi \otimes \eta \otimes \eta'))\\
    &= (x\otimes y\otimes 1)\uU_{\beta,13}^{\mcK, *}(\xi \otimes \eta' \otimes \eta).
\end{align*}
Consequently,
\begin{align*}
    &(I_\mG \otimes 1 \otimes 1)C_{L^2(B), \mcK}^{(\mcG \boxtimes_B-) \lhd \mcK', *}((1\boxtimes_B \uU_{\beta}^{\mcK,*})\otimes 1)(D_{\mcG, \mcK}^*\otimes 1)D_{\mcG, \mcK \obot \mcK'}(x\otimes_B(\xi \otimes \eta \otimes \eta'))\\
    &=(I_\mG \otimes 1 \otimes 1)C_{L^2(B), \mcK}^{(\mcG \underset{B}{\boxtimes}-) \lhd \mcK', *}((1\underset{B}{\boxtimes} \uU_{\beta}^{\mcK,*})\otimes 1)(D_{\mcG, \mcK}^*\otimes 1)(\uU_{\mcG,12}^{\mcK}\uU_{\mcG, 13}^{\mcK'}(x\otimes 1\otimes 1)\uU_{\beta,13}^{\mcK',*}\uU_{\beta,12}^{\mcK,*}(\xi \otimes \eta \otimes \eta'))\\
    &= \uU_{\mG,12}^{\mcK'}(x\otimes 1\otimes 1)\uU_{\beta,12}^{\mcK', *} \uU_{\beta,13}^{\mcK,*} (\xi \otimes \eta' \otimes \eta)\\
    &= (D_{\mcG, \mcK'}\otimes 1)C_{L^2(B), \mcK}^{\mcG\boxtimes_B(-\lhd \mcK'),*}(1\boxtimes_B (\uU_{\beta}^{\mcK,*}\otimes 1))(x\otimes_B (\xi \otimes \eta \otimes  \eta')).
\end{align*}
This shows that 
$$(I_\mG \otimes 1 \otimes 1)C_{L^2(B), \mcK}^{(\mcG \boxtimes_B-) \lhd \mcK', *}((1\boxtimes_B \uU_{\beta}^{\mcK,*})\otimes 1)(D_{\mcG, \mcK}^*\otimes 1) D_{\mcG, \mcK\obot \mcK'}= (D_{\mcG, \mcK'}\otimes 1)C_{L^2(B), \mcK}^{\mcG \boxtimes_B (-\lhd \mcK'),*} (1\boxtimes_B (\uU_{\beta}^{\mcK,*}\otimes 1)).$$
Rearranging, we find
$$(D_{\mcG, \mcK}\otimes 1)((1\boxtimes_B \uU_{\beta}^{\mcK})\otimes 1)C_{L^2(B), \mcK}^{(\mcG\boxtimes_B-)\lhd \mcK'}(I_\mcG^*\otimes 1 \otimes 1)(D_{\mcG, \mcK'}\otimes 1)C_{L^2(B), \mcK}^{\mcG\boxtimes_B(-\lhd \mcK'),*}(1\boxtimes_B (\uU_{\beta}^{\mcK,*}\otimes 1)) = D_{\mcG, \mcK \obot \mcK'}.$$
Multiplying both sides on the left with $I_{\mG}^*\otimes 1 \otimes 1$ and plugging in the definition of the maps $S_{\mcG, L^2(B), \mcK}$, we find the equality \eqref{comp}.
\end{proof}
\subsection{From module functor to equivariant correspondence.}

\begin{Prop} Let $F,G\in \Fun_{\Rep(\hat{\G})}(\Rep(B), \Rep(A))$.
    \begin{enumerate}\setlength\itemsep{-0.5em}
 \item Consider $F((L^2(B), \pi_B))= (\mcG_F, \pi_F)\in \Rep(A)$. 
     Writing 
    $\rho_F(b):= F(\rho_B(b))$ for $b\in B$ and defining 
    \begin{equation}\label{definitionUF}
        U_F:= S_{L^2(B), (L^2(\G), \hat{W}_{21})}\circ F(U_\beta)\circ S_{L^2(B), (L^2(\G), \mathbb{I})}^*\in B(\mcG_F\otimes L^2(\G)),
    \end{equation}
    we have that $(\mcG_F, \pi_F, \rho_F, U_F)\in \Corr^\G(A,B)$. 
    \item If $\eta \in \Nat_{\Rep(\hat{\G})}(F,G)$, then $\eta_{L^2(B)}\in {}_A\mathscr{L}_B^\G(\mcG_F, \mcG_G).$
    \item The assignments $F\mapsto (\mcG_F, \pi_F, \rho_F, U_F)$ and $\eta \mapsto \eta_{L^2(B)}$
    define a functor 
    $$\hat{Q}: \Fun_{\Rep(\hat{\G})}(\Rep(B), \Rep(A))\to \Corr^\G(A,B).$$
    \item $\hat{Q}\circ \hat{P}\cong \id_{\Corr^\G(A,B)}.$
    \end{enumerate}
\end{Prop}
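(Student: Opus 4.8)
The plan is to construct $\hat{Q}$ by the same recipe as in the non-equivariant theorem (Theorem \ref{nonequivariant}) and its $\Rep(\G)$-module analogue (Proposition \ref{alreadyproven}), the one new feature being that the $\G$-representation $U_F$ is not supplied by $F$ but has to be rebuilt from the $\Rep(\hat{\G})$-module structure through \eqref{definitionUF}. Write $\mathbb{1}:=(L^2(\G),\mathbb{I})$ and $\mathbb{w}:=(L^2(\G),\hat{W}_{21})$ in $\Rep(\hat{\G})$. The two identities in the Example preceding the statement, $\pi_{L^2(B)\lhd\mathbb{1}}(b)=\pi_B(b)\otimes 1$ and $\pi_{L^2(B)\lhd\mathbb{w}}(b)=(\pi_B\otimes\id)\beta(b)=U_\beta(\pi_B(b)\otimes1)U_\beta^*$, say precisely that $U_\beta$ is a unitary morphism $L^2(B)\lhd\mathbb{1}\to L^2(B)\lhd\mathbb{w}$ in $\Rep(B)$; hence $F(U_\beta)$, and thus $U_F$, is unitary. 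Since $\rho_B$ is a unital normal anti-$*$-representation valued in $\pi_B(B)'={}_B\mathscr{L}(L^2(B))$ and $F$ is a normal $*$-functor, $\rho_F=F\circ\rho_B$ is a unital normal anti-$*$-representation valued in $\pi_F(A)'={}_A\mathscr{L}(\mcG_F)$, so $\pi_F(a)\rho_F(b)=\rho_F(b)\pi_F(a)$ holds automatically.

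For the implementation of $\alpha$ I would first record $\pi_{\mcG_F\lhd\mathbb{w}}(a)=(\pi_F\otimes\tilde{\pi}_\G)(\alpha^u(a))=(\pi_F\otimes\id)(\alpha(a))$, using the reduction identity $(\id\otimes\tilde{\pi}_\G)\circ\alpha^u=\alpha$. Conjugating $\pi_F(a)\otimes1=\pi_{\mcG_F\lhd\mathbb{1}}(a)$ by $U_F$ and moving the conjugation across the three $A$-linear unitaries $S_{L^2(B),\mathbb{1}}^*$, $F(U_\beta)$, $S_{L^2(B),\mathbb{w}}$ (the middle one being $A$-linear because $U_\beta$ is a morphism in $\Rep(B)$) yields $U_F(\pi_F(a)\otimes1)U_F^*=(\pi_F\otimes\id)(\alpha(a))$. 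For the two remaining axioms I would use the auxiliary fact that $S_{L^2(B),\mathbb{1}}$ equals the inverse of the canonical multiplicity isomorphism $C^F_{L^2(B),L^2(\G)}$ of \eqref{canonicalisomorphismmultiplicity}: between trivial $\hat{\G}$-representations every bounded operator is an intertwiner, so $S_{L^2(B),-}$ is natural along the coordinate isometries $\C\to L^2(\G)$ and, since $S_{L^2(B),\mathbb{1}_\C}=\id$, is forced to be this multiplicity isomorphism. Granting this, one gets $S_{L^2(B),\mathbb{1}}\,F(\textstyle\sum_j s_j\otimes m_j)\,S_{L^2(B),\mathbb{1}}^*=\sum_j F(s_j)\otimes m_j$ for $s_j\in{}_B\mathscr{L}(L^2(B))$ and $m_j\in B(L^2(\G))$; applying this to $(\rho_B\otimes R)(\beta(b))\in{}_B\mathscr{L}(L^2(B)\lhd\mathbb{1})$, and using $U_\beta^*(\rho_B(b)\otimes1)U_\beta=(\rho_B\otimes R)(\beta(b))$ (valid since $L^2(B)\in\Corr^\G(B,B)$) together with naturality of $S$ in its first variable, collapses $U_F^*(\rho_F(b)\otimes1)U_F$ to $(\rho_F\otimes R)(\beta(b))$.

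The genuine obstacle is to verify that $U_F$ is an honest unitary $\G$-representation: that its second leg lies in $L^\infty(\G)$ and that $(\id\otimes\Delta)(U_F)=U_{F,12}U_{F,13}$. I would establish this by transporting the comultiplicativity $(\id\otimes\Delta)(U_\beta)=U_{\beta,12}U_{\beta,13}$ of the unitary implementation of $\beta$ through $F$, using the module-functor coherence \eqref{module functor} in the form $S_{L^2(B),\mathbb{w}\obot\mathbb{w}}=(S_{L^2(B),\mathbb{w}}\otimes1)\circ S_{L^2(B)\lhd\mathbb{w},\mathbb{w}}$ (and the analogue for $\mathbb{1}\obot\mathbb{1}$) to reassemble the two legs of $U_{F,12}U_{F,13}$ into $(\id\otimes\Delta)(U_F)$; here the comultiplicativity of the regular representation $\hat{W}_{21}$, equivalently $\Delta(x)=W^*(1\otimes x)W$, is what matches the double object $\mathbb{w}\obot\mathbb{w}$ with the comultiplication on the $L^\infty(\G)$-leg. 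To localize the leg inside $L^\infty(\G)$ in the first place I expect to run a double-leg argument in the spirit of Theorem \ref{restrictionsisos}. Keeping track of the several module unitaries and of the passage between $\mathbb{w}$ and $\mathbb{1}$ throughout this step is, I anticipate, the main technical difficulty; once it is done, $(\mcG_F,\pi_F,\rho_F,U_F)\in\Corr^\G(A,B)$ follows, completing part (1).

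Parts (2) and (3) are then routine. For (2), $\eta_{L^2(B)}$ is $A$-linear as a component of a natural transformation into $\Rep(A)$; it is $B$-linear because $\eta_{L^2(B)}\rho_F(b)=\eta_{L^2(B)}F(\rho_B(b))=G(\rho_B(b))\eta_{L^2(B)}=\rho_G(b)\eta_{L^2(B)}$; and it is $\G$-equivariant by chaining the module-naturality squares \eqref{rio} for $\mathbb{w}$ and for $\mathbb{1}$ with naturality of $\eta$ along the morphism $U_\beta$, which gives $(\eta_{L^2(B)}\otimes1)U_F=U_G(\eta_{L^2(B)}\otimes1)$. Part (3) is immediate functoriality. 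For (4) I would compute $\hat{Q}(\hat{P}(\mcG))=\hat{Q}(F_\mcG)$ directly: the $A$-linear unitary $I_\mcG\colon\mcG\boxtimes_B L^2(B)\to\mcG$ identifies $\pi_{F_\mcG}$ with $\pi_\mcG$ and $\rho_{F_\mcG}(b)=F_\mcG(\rho_B(b))=1\boxtimes_B\rho_B(b)$ with $\rho_\mcG(b)$, while substituting the explicit formulas $S_{\mcG,L^2(B),\mcK}=(I_\mcG^*\otimes\id)\circ D_{\mcG,\mcK}$ and $F_\mcG(U_\beta)=1\boxtimes_B U_\beta$ into \eqref{definitionUF} and conjugating by $I_\mcG$ identifies $U_{F_\mcG}$ with $U_\mcG$. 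Naturality of this isomorphism in $\mcG$ is clear, giving $\hat{Q}\circ\hat{P}\cong\id_{\Corr^\G(A,B)}$.
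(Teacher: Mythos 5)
Your proposal is correct and follows essentially the same route as the paper's proof: $U_\beta$ viewed as a unitary in ${}_B\mathscr{L}(L^2(B)\lhd(L^2(\G),\mathbb{I}),\,L^2(B)\lhd(L^2(\G),\hat{W}_{21}))$, the compatibility of $U_F$ with $\pi_F$ via $A$-linearity of the three factors and the identity $\pi_{\mcG_F\lhd(L^2(\G),\hat{W}_{21})}(a)=(\pi_F\otimes\id)(\alpha(a))$, the compatibility with $\rho_F$ via naturality on ${}_B\mathscr{L}(L^2(B)\lhd(L^2(\G),\mathbb{I}))=\rho_B(B)\ovot B(L^2(\G))$ together with normality of $F$, the corepresentation identity for $U_F$ via the coherence \eqref{module functor} combined with the intertwiner $W\in\mathscr{L}^{\hat{\G}}((L^2(\G),\hat{W}_{21})\obot(L^2(\G),\hat{W}_{21}),\,(L^2(\G),\mathbb{I})\obot(L^2(\G),\hat{W}_{21}))$, and the same arguments (module-naturality squares \eqref{rio} chained with naturality along $U_\beta$, respectively conjugation by $I_\mcG$ and the explicit formulas for $S_{\mcG,L^2(B),\mcK}$) for parts (2)--(4). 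The only step you over-anticipate is the localization of the second leg of $U_F$ in $L^\infty(\G)$, which you flag as requiring a double-leg argument in the style of Theorem \ref{restrictionsisos}: in fact, since $\mathscr{L}^{\hat{\G}}((L^2(\G),\hat{W}_{21}))=L^\infty(\G)'$, naturality of the module unitaries and functoriality of $F$ give $U_F(1\otimes x)=(1\otimes x)U_F$ for every $x\in L^\infty(\G)'$ in one line, and likewise your detour through the multiplicity isomorphism \eqref{canonicalisomorphismmultiplicity} for $S_{L^2(B),(L^2(\G),\mathbb{I})}$, while sound, is unnecessary, as $S\,F(\rho_B(b)\otimes x)\,S^*=\rho_F(b)\otimes x$ follows directly from naturality in both variables (every bounded operator being an intertwiner of the trivial $\hat{\G}$-representation).
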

\begin{proof} (1) To simplify notation, we use the following notations in this proof:
    \begin{align*}
    S_{\mathbb{I}}^{1,1}&:= S_{L^2(B), (L^2(\G), \mathbb{I})}, &
    S_{\hat{W}_{21}}^{1,1}&:= S_{L^2(B), (L^2(\G), \hat{W}_{21})},\\ 
    S_{\mathbb{I}}^{1,2}&:= S_{L^2(B), (L^2(\G), \mathbb{I})\obot (L^2(\G), \mathbb{I})},&
    S_{\hat{W}_{31}}^{1,2}&:= S_{L^2(B), (L^2(\G),\hat{W}_{21})\obot (L^2(\G), \mathbb{I})},\\
        S_{\hat{W}_{32}}^{1,2}&:= S_{L^2(B), (L^2(\G), \mathbb{I})\obot (L^2(\G), \hat{W}_{21})},&
        S_{\hat{W}_{32}\hat{W}_{31}}^{1,2}&:= S_{L^2(B), (L^2(\G),\hat{W}_{21})\obot (L^2(\G),\hat{W}_{21})}.
    \end{align*}

 We start by noting that 
    $U_\beta \in {}_B\mathscr{L}(L^2(B)\lhd (L^2(\G),\mathbb{I}), L^2(B)\lhd (L^2(\G), \hat{W}_{21})),$
    since for $b\in B$, we have
    $$U_\beta \pi_{L^2(B)\lhd (L^2(\G), \mathbb{I})}(b) = U_\beta(\pi_B(b)\otimes 1) = (\pi_B\otimes \id)(\beta(b)) U_\beta = \pi_{L^2(B)\lhd (L^2(\G), \hat{W}_{21})}(b) U_\beta.$$
    Consequently, it makes sense to define the unitary $U_F: \mcG_F\otimes L^2(\G)\to \mcG_F\otimes L^2(\G)$ given by $U_F:= S_{\hat{W}_{21}}^{1,1}\circ F(U_\beta)\circ S_\mathbb{I}^{1,1,*}$. If $x\in L^\infty(\G)'= \mathscr{L}^{\hat{\G}}((L^2(\G), \hat{W}_{21}))$, we have
    \begin{align*}
        U_F(1\otimes x) &= S_{\hat{W}_{21}}^{1,1} F(U_\beta) S_{\mathbb{I}}^{1,1,*}(1\otimes x) = S_{\hat{W}_{21}}^{1,1} F(U_\beta) F(1\otimes x) S_\mathbb{I}^{1,1,*} = S_{\hat{W}_{21}}^{1,1} F(1\otimes x)F(U_\beta)S_\mathbb{I}^{1,1,*} \\
        &= (1\otimes x) S_{\hat{W}_{21}}^{1,1} F(U_\beta)S_{\mathbb{I}}^{1,1,*} = (1\otimes x) U_F, 
    \end{align*}
    and we conclude that $U_F \in B(\mcG_F) \ovot L^\infty(\G)$. It is easily verified that
    $$S^{1,2}_{\hat{W}_{31}} F(U_{\beta,12}) S^{1,2,*}_{\mathbb{I}}= U_F\otimes 1= S^{1,2}_{\hat{W}_{32}\hat{W}_{31}} F(U_{\beta,12}) S^{1,2,*}_{\hat{W}_{32}},$$
    so that also
    $$U_{F,13}= (1\otimes \Sigma)(U_F\otimes 1)(1\otimes \Sigma)= S^{1,2}_{\hat{W}_{32}} F(U_{\beta,13}) S^{1,2,*}_{\mathbb{I}}.$$
    Using the fact that
    $$W \in \mathscr{L}^{\hat{\G}}((L^2(\G), \hat{W}_{21})\obot (L^2(\G), \hat{W}_{21}), (L^2(\G), \mathbb{I})\obot (L^2(\G), \hat{W}_{21})),$$
    we then calculate
    \begin{align*}W_{23} U_{F,12}U_{F,13} &= W_{23} (S^{1,2}_{\hat{W}_{32}\hat{W}_{31}} F(U_{\beta,12}) S^{1,2,*}_{\hat{W}_{32}})(S^{1,2}_{\hat{W}_{32}} F(U_{\beta,13}) S^{1,2,*}_\mathbb{I})\\
    &= S^{1,2}_{\hat{W}_{32}} F(W_{23}U_{\beta,12}U_{\beta,13}) S_\mathbb{I}^{1,2,*}\\
    &= S_{\hat{W}_{32}}^{1,2} F(U_{\beta,13}W_{23}) S_\mathbb{I}^{1,2,*}\\
    &= S_{\hat{W}_{32}}^{1,2} F(U_{\beta,13}) S_\mathbb{I}^{1,2,*} W_{23} =  U_{F,13} W_{23},
        \end{align*}
and it follows that $U_F$ is a unitary $\G$-representation. 

By construction, we have that $U_F\in {}_A\mathscr{L}(\mcG_F\lhd (L^2(\G), \mathbb{I}), \mcG_F\lhd (L^2(\G), \hat{W}_{21}))$. Therefore, given $a\in A$, it follows that 
\begin{align*}
    U_F(\pi_F(a)\otimes 1) &= U_F \pi_{\mcG_F \lhd (L^2(\G), \mathbb{I})}(a)= \pi_{\mcG_F\lhd (L^2(\G), \hat{W}_{21})}(a) U_F = (\pi_F \otimes \id)(\alpha(a))U_F.
\end{align*}
On the other hand, we have
${}_B\mathscr{L}(L^2(B)\lhd (L^2(\G), \mathbb{I})) = \rho_B(B)\ovot B(L^2(\G))$. For $b\in B$ and $x\in B(L^2(\G))$, we find
$$S_{L^2(B), (L^2(\G),\mathbb{I})} F(\rho_B(b)\otimes x) S_{L^2(B), (L^2(\G),\mathbb{I})}^*= F(\rho_B(b))\otimes x = \rho_F(b)\otimes x,$$
so by normality of $F$, it follows that
$$S_{L^2(B), (L^2(\G),\mathbb{I})}F((\rho_B\otimes R)(\beta(b))) S_{L^2(B), (L^2(\G),\mathbb{I})}^*= (\rho_F\otimes R)(\beta(b)).$$
Consequently, for $b\in B$,
\begin{align*}
    (\rho_F(b)\otimes 1)U_F &= (F(\rho_B(b))\otimes 1)S_{L^2(B), (L^2(\G), \hat{W}_{21})} F(U_\beta)S_{L^2(B), (L^2(\G), \mathbb{I})}^* \\
    &= S_{L^2(B), (L^2(\G), \hat{W}_{21})} F((\rho_B(b)\otimes 1)U_\beta)S_{L^2(B), (L^2(\G),\mathbb{I})}^*\\
    &= S_{L^2(B), (L^2(\G),\hat{W}_{21})} F(U_\beta(\rho_B\otimes R)(\beta(b))) S_{L^2(B),(L^2(\G),\mathbb{I})}^*\\
    &= S_{L^2(B), (L^2(\G),\hat{W}_{21})} F(U_\beta) F((\rho_B\otimes R)(\beta(b)))S_{L^2(B), (L^2(\G), \mathbb{I})}^*\\
    &= S_{L^2(B), (L^2(\G),\hat{W}_{21})} F(U_\beta) S_{L^2(B), (L^2(\G), \mathbb{I})}^* (\rho_F\otimes R)(\beta(b))= U_F(\rho_F\otimes R)(\beta(b)).
\end{align*}
We conclude that $(\mcG_F, \pi_F, \rho_F, U_F)\in \Corr^\G(A,B)$, as desired.

(2)   We calculate
    \begin{align*}
        (\eta_{L^2(B)}\otimes 1)U_F &= (\eta_{L^2(B)}\otimes 1)S_{L^2(B), (L^2(\G), \hat{W}_{21})}F(U_\beta) S_{L^2(B), (L^2(\G), \mathbb{I})}^* \\
        &= S_{L^2(B), (L^2(\G), \hat{W}_{21})} \eta_{L^2(B)\lhd (L^2(\G), \hat{W}_{21})} F(U_\beta) S_{L^2(B), (L^2(\G), \mathbb{I})}^*\\
        &= S_{L^2(B), (L^2(\G), \hat{W}_{21})} G(U_\beta) \eta_{L^2(B)\lhd (L^2(\G), \mathbb{I})} S_{L^2(B), (L^2(\G), \mathbb{I})}^*= U_G(\eta_{L^2(B)}\otimes 1).
    \end{align*}

    (3) Clear from the two previous steps.

    (4) Given $\mcG \in \Corr^\G(A,B)$, it suffices to show that $\hat{Q}(F_\mcG) = (\mcG\boxtimes_B L^2(B), \pi_\boxtimes, \rho_\boxtimes, U_\boxtimes)$. Consider the unitary identification
    $$T: (\mcG\otimes L^2(\G))\boxtimes_{B \ovot L^\infty(\G)} (L^2(B)\otimes L^2(\G))\to (\mcG \boxtimes_B L^2(B))\otimes L^2(\G)$$
    from \eqref{naturalunitary}. With $\mcK = (L^2(\G), \mathbb{I})\in \Rep(\hat{\G})$ and a unitary $\G$-representation $U\in B(\mcH)\ovot L^\infty(\G)$, we have $\uU^\mcK = 1 \otimes 1$ and with $\mcK= (L^2(\G),\hat{W}_{21})\in \Rep(\hat{\G})$, we have $\uU^{\mcK}= U$. 
    Then we calculate for $x\in \mathscr{L}_B(L^2(B), \mcG), \xi \in L^2(B)$ and $\eta \in L^2(\G)$ that
    \begin{align*}
        (I_\mcG \otimes 1)U_\boxtimes S_{\mcG,L^2(B), (L^2(\G), \mathbb{I})}(x\otimes_B (\xi \otimes \eta)) &= (I_\mcG \otimes 1)U_\boxtimes((x\otimes_B \xi)\otimes \eta)\\
        &= (I_\mcG \otimes 1) T(U_{\mcG}(x\otimes 1)U_\beta^*\otimes_{B \ovot L^\infty(\G)} U_\beta(\xi \otimes \eta))\\
        &= U_\mcG(x\otimes 1)U_\beta^* U_\beta(\xi \otimes \eta)\\
        &= (I_\mcG\otimes 1) S_{\mcG, L^2(B), (L^2(\G),\hat{W}_{21})}(1\boxtimes_B U_\beta)(x\otimes_B (\xi \otimes \eta)).
    \end{align*}
    It follows that 
       $U_\boxtimes = S_{\mcG, L^2(B), (L^2(\G), \hat{W}_{21})} \circ F_\mcG(U_\beta) \circ S_{\mcG, L^2(B), (L^2(\G), \mathbb{I})}^*  = U_{F_\mcG}.$
\end{proof}
\subsection{Equivariant Eilenberg-Watts theorem}


\begin{Theorem}[Equivariant Eilenberg-Watts theorem II]\label{main2}
    The canonical functors
    \begin{align*}
        &\hat{P}: \Corr^\G(A,B)\to \Fun_{\Rep(\hat{\G})}(\Rep(B), \Rep(A)), \quad \hat{Q}: \Fun_{\Rep(\hat{\G})}(\Rep(B), \Rep(A))\to \Corr^\G(A,B)
    \end{align*}
    are quasi-inverse to each other.
\end{Theorem}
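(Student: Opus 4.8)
The plan is to deduce the theorem from the already-established isomorphism $\hat{Q}\circ\hat{P}\cong\id_{\Corr^\G(A,B)}$ by proving that $\hat{Q}$ is fully faithful. Indeed, once $\hat{Q}$ is fully faithful, the relation $\hat{Q}\circ\hat{P}\cong\id$ exhibits $\hat{Q}$ as essentially surjective, hence an equivalence, and any $\hat{P}$ with $\hat{Q}\circ\hat{P}\cong\id$ is then forced to be a quasi-inverse of $\hat{Q}$; this is the same deduction used in Theorem \ref{nonequivariant}. Fixing $F,G\in\Fun_{\Rep(\hat{\G})}(\Rep(B),\Rep(A))$, full faithfulness means that $\eta\mapsto\eta_{L^2(B)}$ is a bijection $\Nat_{\Rep(\hat{\G})}(F,G)\to {}_A\mathscr{L}_B^\G(\mcG_F,\mcG_G)$. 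Since $L^2(B)$ is a generator of $\Rep(B)$, the non-equivariant bijection \eqref{Rieffelbijection} identifies $\Nat(F,G)\cong {}_A\mathscr{L}_B(\mcG_F,\mcG_G)$; as $\Nat_{\Rep(\hat{\G})}(F,G)\subseteq\Nat(F,G)$, injectivity is immediate, and the substance of the claim is that under this identification the $\Rep(\hat{\G})$-module natural transformations correspond exactly to the $\G$-equivariant intertwiners.

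One inclusion is part (2) of the preceding proposition: $\eta_{L^2(B)}\in{}_A\mathscr{L}_B^\G(\mcG_F,\mcG_G)$ whenever $\eta\in\Nat_{\Rep(\hat{\G})}(F,G)$. For the converse, fix $\eta_1\in{}_A\mathscr{L}_B^\G(\mcG_F,\mcG_G)$. Its $B$-linearity $\eta_1\rho_F(b)=\rho_G(b)\eta_1$, together with ${}_B\mathscr{L}(L^2(B))=\rho_B(B)$ and $F(\rho_B(b))=\rho_F(b)$, lets me apply \eqref{Rieffelbijection} to produce a unique $\eta\in\Nat(F,G)$ of the underlying functors with $\eta_{L^2(B)}=\eta_1$. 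It then remains to verify that $\eta$ is a module natural transformation, i.e. that \eqref{rio} commutes for all $\mcH\in\Rep(B)$ and $\mcK\in\Rep(\hat{\G})$. Fixing $\mcK$, both composites in \eqref{rio} are natural transformations $F(-\lhd\mcK)\implies G(-)\lhd\mcK$ of functors on $\Rep(B)$, so by the injectivity in \eqref{Rieffelbijection} it suffices to check \eqref{rio} at the generator $\mcH=L^2(B)$.

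I would verify \eqref{rio} at $\mcH=L^2(B)$ first for two distinguished objects. For the trivial representation $\mcK=(L^2(\G),\mathbb{I})$, the action $-\lhd\mcK$ is the amplification by $L^2(\G)$, and $S_{L^2(B),(L^2(\G),\mathbb{I})}$ coincides with the canonical multiplicity isomorphism of \eqref{canonicalisomorphismmultiplicity} (as forced by the coherence \eqref{module functor} and naturality); hence \eqref{rio} reduces to the commuting square \eqref{canmultiiso} and holds automatically. For the regular representation $\mcK=(L^2(\G),\hat{W}_{21})$, I would combine the defining formula \eqref{definitionUF} for $U_F$ and $U_G$ with the trivial-representation case and the naturality of $\eta$ along the $B$-linear unitary $U_\beta\colon L^2(B)\lhd(L^2(\G),\mathbb{I})\to L^2(B)\lhd(L^2(\G),\hat{W}_{21})$; a short manipulation shows that \eqref{rio} at $(L^2(\G),\hat{W}_{21})$ is precisely equivalent to the $\G$-equivariance $(\eta_1\otimes 1)U_F=U_G(\eta_1\otimes 1)$, which holds by hypothesis. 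Since $(L^2(\G),\hat{W}_{21})$ generates the ideal $\Rep(L^\infty(\G))\subseteq\Rep(\hat{\G})$ recalled at the start of this section, naturality in $\mcK$ then extends \eqref{rio} to all $\mcK\in\Rep(L^\infty(\G))$.

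The main obstacle is the final passage from the ideal $\Rep(L^\infty(\G))$ to all of $\Rep(\hat{\G})$: when $\G$ fails to be coamenable, $(L^2(\G),\hat{W}_{21})$ is not a generator of $\Rep(\hat{\G})$ and the trivial representation $(L^2(\G),\mathbb{I})$ does not belong to $\Rep(L^\infty(\G))$ — exactly the difficulty that made the non-compact case of Section \ref{Section 3} delicate. To overcome it I would run the natural-transformation half of the argument of Theorem \ref{restrictionsisos}, now for the dual ideal $\Rep(L^\infty(\G))\subseteq\Rep(\hat{\G})$: using Fell's absorption (Proposition \ref{Fell absorption}) one reconstructs $S_{\mcH,\mcK}$ for arbitrary $\mcK$ from the data on $\Rep(L^\infty(\G))$ via an identity analogous to \eqref{unique unitary}, the relevant operator being pinned down by $L^\infty(\G)\cap L^\infty(\hat{\G})=\C 1$, and a diagram chase identical to the one proving $\Nat_{\Rep(L^\infty(\check{\G}))}(F,G)=\Nat_{\Rep(\G)}(F,G)$ there upgrades $\eta\in\Nat_{\Rep(L^\infty(\G))}(F,G)$ to $\eta\in\Nat_{\Rep(\hat{\G})}(F,G)$. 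This produces a module natural transformation $\eta$ with $\eta_{L^2(B)}=\eta_1$, establishing surjectivity, hence the full faithfulness of $\hat{Q}$, and with it the theorem.
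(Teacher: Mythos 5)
Your proposal is correct, and while it shares the paper's outer frame --- reducing Theorem \ref{main2} to full faithfulness of $\hat{Q}$ via $\hat{Q}\circ\hat{P}\cong\id_{\Corr^\G(A,B)}$, and closing with the passage from $\Nat_{\Rep(L^\infty(\G))}(F,G)$ to $\Nat_{\Rep(\hat{\G})}(F,G)$ by the argument of Theorem \ref{restrictionsisos} --- the core of your surjectivity argument is genuinely different. The paper imitates Proposition \ref{integrablefullyfaithful}: from $\eta_1$ it manufactures $\eta_2$ and $\eta_3$ using the structure maps at $(L^2(\G),\hat{W}_{21})$, proves through its longest computation that $\eta_3$ commutes with the $F$- and $G$-images of the full relative commutant ${}_B\mathscr{L}\bigl(L^2(B)\lhd((L^2(\G),\hat{W}_{21})\obot(L^2(\G),\hat{W}_{21}))\bigr)=U_{\beta,12}U_{\beta,13}(\rho_B(B)\ovot B(L^2(\G))\ovot B(L^2(\G)))U_{\beta,13}^*U_{\beta,12}^*$, applies \eqref{Rieffelbijection} at that doubled generator, and only then descends to $\eta_2$ and to $\eta_{L^2(B)}=\eta_1$ via a Fell-absorption isometry. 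You instead apply \eqref{Rieffelbijection} directly at $L^2(B)$ --- where the required commutation is nothing but the $B$-bilinearity of $\eta_1$, since ${}_B\mathscr{L}(L^2(B))=\rho_B(B)$ --- and then verify the module compatibility \eqref{rio} at the two distinguished second arguments: at $(L^2(\G),\mathbb{I})$, where it becomes the automatic square \eqref{canmultiiso}, and at $(L^2(\G),\hat{W}_{21})$, where the chain $S^G_{\hat{W}_{21}}\,\eta_{L^2(B)\lhd \hat{W}_{21}} = U_G\,S^G_{\mathbb{I}}\,\eta_{L^2(B)\lhd\mathbb{I}}\,F(U_\beta)^* = U_G(\eta_1\otimes 1)U_F^*\,S^F_{\hat{W}_{21}}$ (naturality along $U_\beta$, the trivial case, and \eqref{definitionUF}) shows that \eqref{rio} is exactly the equivariance $(\eta_1\otimes 1)U_F=U_G(\eta_1\otimes 1)$. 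This is sound, and it buys the elimination of the paper's heaviest display; it is available precisely because in Section \ref{Section 4} the module structure lives over all of $\Rep(\hat{\G})$, so trivial representations are at your disposal --- the feature whose absence from $\Rep(L^\infty(\check{\G}))$ forced the computations of Section \ref{Section 3} that the paper's proof of Theorem \ref{main2} deliberately mirrors. The paper's route, in exchange, needs no statement about structure maps at trivial representations and keeps the two sections structurally parallel.

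One step you assert rather than prove deserves a recorded argument: that $S_{L^2(B),(L^2(\G),\mathbb{I})}$ coincides with the canonical multiplicity isomorphism of \eqref{canonicalisomorphismmultiplicity}. (The paper never uses this; in its computations only the weaker naturality consequence $S_{L^2(B),(L^2(\G),\mathbb{I})}\,F(\rho_B(b)\otimes x)\,S_{L^2(B),(L^2(\G),\mathbb{I})}^*=\rho_F(b)\otimes x$ appears.) It is indeed forced, as you say, by coherence and naturality: since $(\C,\mathbb{I})\in\Rep(\hat{\G})$ and \eqref{module functor} is imposed for \emph{all} pairs of objects, taking $X=Y=(\C,\mathbb{I})$ and transporting along the canonical unitaries $\C\otimes\C\cong\C$ and $M\lhd(\C,\mathbb{I})\cong M$ (morphisms in $\Rep(\hat{\G})$ resp.\ $\Rep(B)$, so naturality applies) yields $S_{M,(\C,\mathbb{I})}=S_{M,(\C,\mathbb{I})}\circ S_{M,(\C,\mathbb{I})}$ up to these identifications, whence $S_{M,(\C,\mathbb{I})}=\id$ by unitarity; naturality in the second variable along the isometries $(\C,\mathbb{I})\to(L^2(\G),\mathbb{I}):\lambda\mapsto\lambda e_i$, together with normality of $F$ (so that the ranges of the $F(1\lhd t_{e_i})$ span densely), then pins $S_{M,(L^2(\G),\mathbb{I})}$ down as $C^{F,*}_{M,L^2(\G)}$. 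With this lemma supplied, your proof is complete and somewhat shorter than the paper's.
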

\begin{proof} Since $\hat{Q}\circ \hat{P}\cong \id_{\Corr^\G(A,B)}$, it suffices to prove that $\hat{Q}$ is fully faithful. In other words, given $F,G \in \Fun_{\Rep(\hat{\G})}(\Rep(B), \Rep(A))$, we need to prove that the map
    \begin{equation}\label{injsur}\Nat_{\Rep(\hat{\G})}(F,G)\to {}_A\mathscr{L}_B^\G(\mcG_F, \mcG_G): \eta \mapsto \eta_{L^2(B)}\end{equation}
    is bijective. We will follow the strategy from the proof of Proposition \ref{integrablefullyfaithful} for this. Since $L^2(B)$ is a generator for $\Rep(B)$, it is clear that \eqref{injsur} is injective. Next, we let $\eta_1\in {}_A\mathscr{L}^\G_B(\mcG_F, \mcG_G)$ and we define the $A$-linear operators
    \begin{align*}
        &\eta_2:= S_{L^2(B),(L^2(\G),\hat{W}_{21})}^*(\eta_1\otimes 1) S_{L^2(B), (L^2(\G),\hat{W}_{21})},\\
        &\eta_3:= S_{L^2(B)\lhd (L^2(\G),\hat{W}_{21}), (L^2(\G),\hat{W}_{21})}^*(\eta_2\otimes 1) S_{L^2(B)\lhd (L^2(\G), \hat{W}_{21}), (L^2(\G),\hat{W}_{21})}.
    \end{align*} 
    Note that
\begin{align*}
    {}_B\mathscr{L}(L^2(B)\lhd (L^2(\G),\hat{W}_{21})) &= U_\beta(\rho_B(B)\ovot B(L^2(\G)))U_\beta^*,\\
    {}_B\mathscr{L}(L^2(B)\lhd ((L^2(\G),\hat{W}_{21})\obot (L^2(\G), \hat{W}_{21}))) &= U_{\beta,12}U_{\beta,13}(\rho_B(B)\ovot B(L^2(\G))\ovot B(L^2(\G)))U_{\beta,13}^*U_{\beta,12}^*.
\end{align*}
We then calculate for $b\in B$ and $y \in B(L^2(\G))$ that
\begin{align*}
    \eta_2 F(U_\beta(\rho_B(b)\otimes y)U_\beta^*) &= S_{L^2(B), (L^2(\G), \hat{W}_{21})}^* (\eta_1\otimes 1)S_{L^2(B), (L^2(\G), \hat{W}_{21})} F(U_\beta) F(\rho_B(b)\otimes y)F(U_\beta^*)\\
    &= S_{L^2(B), (L^2(\G), \hat{W}_{21})}^* (\eta_1\otimes 1) U_F S_{L^2(B), (L^2(\G), \mathbb{I})} F(\rho_B(b)\otimes y) F(U_\beta^*)\\
    &= S_{L^2(B), (L^2(\G), \hat{W}_{21})}^* U_G (\eta_1\otimes 1) S_{L^2(B), (L^2(\G), \mathbb{I})} F(\rho_B(b)\otimes y) F(U_\beta^*)\\
    &= G(U_\beta) S_{L^2(B), (L^2(\G), \mathbb{I})}^* (\eta_1\otimes 1) (\rho_F(b)\otimes y)S_{L^2(B), (L^2(\G), \mathbb{I})} F(U_\beta^*)\\
    &= G(U_\beta) S_{L^2(B), (L^2(\G), \mathbb{I})}^* (\rho_G(b)\otimes y) (\eta_1\otimes 1) S_{L^2(B), (L^2(\G), \mathbb{I})} F(U_\beta^*)\\
    &= G(U_\beta) G(\rho_B(b)\otimes y) S_{L^2(B), (L^2(\G), \mathbb{I})}^* (\eta_1\otimes 1) S_{L^2(B), (L^2(\G), \mathbb{I})} F(U_\beta^*)\\
    &= G(U_\beta) G(\rho_B(b)\otimes y) S_{L^2(B), (L^2(\G), \mathbb{I})}^* (\eta_1\otimes 1) U_F^* S_{L^2(B), (L^2(\G), \hat{W}_{21})}\\
    &= G(U_\beta)G(\rho_B(b)\otimes y) S_{L^2(B), (L^2(\G), \mathbb{I})}^* U_G^* (\eta_1\otimes 1) S_{L^2(B), (L^2(\G), \hat{W}_{21})}\\
    &= G(U_\beta(\rho_B(b)\otimes y)U_\beta^*) \eta_2.
\end{align*}

Next, we claim that for $b\in B$ and $x,y \in B(L^2(\G))$, we have
\begin{equation}
    \eta_3 F(U_{\beta, 12}U_{\beta,13}(\rho_B(b)\otimes x\otimes y)U_{\beta,13}^* U_{\beta,12}^*)= G(U_{\beta, 12}U_{\beta,13}(\rho_B(b)\otimes x\otimes y)U_{\beta,13}^* U_{\beta,12}^*) \eta_3.
\end{equation}

Indeed, introducing the notations
\begin{align*}
    S_{\mathbb{I}}^{1,1}&:= S_{L^2(B), (L^2(\G), \mathbb{I})}, & S_{\hat{W}_{21}}^{1,1}&:= S_{L^2(B), (L^2(\G), \hat{W}_{21})},\\
    S_{\hat{W}_{32}\hat{W}_{31}}^{1,2}&:= S_{L^2(B), (L^2(\G), \hat{W}_{21})\otop (L^2(\G), \hat{W}_{21})}, & S_{\hat{W}_{31}}^{1,2}&:= S_{L^2(B), (L^2(\G), \hat{W}_{21})\otop (L^2(\G), \mathbb{I})},\\
    S_{\hat{W}_{32}}^{1,2}&:= S_{L^2(B), (L^2(\G), \mathbb{I})\otop (L^2(\G), \hat{W}_{21})}, & S_{\mathbb{I}, \hat{W}_{21}}^{2,1} &:= S_{L^2(B) \lhd (L^2(\G), \mathbb{I}), (L^2(\G), \hat{W}_{21})},\\ 
     S_{\hat{W}_{21}, \mathbb{I}}^{2,1} &:= S_{L^2(B) \lhd (L^2(\G), \hat{W}_{21}), (L^2(\G), \mathbb{I})}, & S_{\hat{W}_{21}, \hat{W}_{21}}^{2,1}&:= S_{L^2(B)\lhd (L^2(\G), \hat{W}_{21}), (L^2(\G), \hat{W}_{21})},
\end{align*}
we calculate:
\begin{align*}
    &\eta_3 F(U_{\beta,12}U_{\beta,13}(\rho_B(b)\otimes x\otimes y) U_{\beta,13}^* U_{\beta,12}^*) \\
    &= S_{\hat{W}_{21},\hat{W}_{21}}^{2,1,*} (\eta_2\otimes 1)S_{\hat{W}_{21},\hat{W}_{21}}^{2,1} F(U_{\beta,12})F(U_{\beta,13}(\rho_B(b)\otimes x \otimes y)U_{\beta,13}^*)F(U_{\beta,12}^*)\\
    &= S_{\hat{W}_{21},\hat{W}_{21}}^{2,1,*} (\eta_2\otimes 1)(F(U_\beta)\otimes 1)S_{\mathbb{I},\hat{W}_{21}}^{2,1} F(U_{\beta,13}(\rho_B(b)\otimes x\otimes y)U_{\beta,13}^*)F(U_{\beta,12}^*)\\
    &= S_{\hat{W}_{21},\hat{W}_{21}}^{2,1,*} (\eta_2\otimes 1)(F(U_\beta)\otimes 1)(S_{\mathbb{I}}^{1,1,*} \otimes 1)  S^{1,2}_{\hat{W}_{32}} F(U_{\beta,13}(\rho_B(b)\otimes x\otimes y)U_{\beta,13}^*)F(U_{\beta,12}^*)\\
&=  S_{\hat{W}_{21},\hat{W}_{21}}^{2,1,*} (\eta_2\otimes 1)(F(U_\beta)\otimes 1) (S_\mathbb{I}^{1,1,*}\otimes 1 ) S_{\hat{W}_{32}}^{1,2} F(\Sigma_{23}) F(U_{\beta,12}(\rho_B(b)\otimes y\otimes x)U_{\beta,12}^*) F(\Sigma_{23})F(U_{\beta,12}^*)\\
&= S_{\hat{W}_{21}, \hat{W}_{21}}^{2,1,*} (\eta_2\otimes 1)(F(U_\beta)\otimes 1)(S_\mathbb{I}^{1,1,*}\otimes 1) \Sigma_{23} S_{\hat{W}_{31}}^{1,2} F(U_{\beta,12}(\rho_B(b)\otimes y \otimes x)U_{\beta,12}^*)F(\Sigma_{23})F(U_{\beta,12}^*)\\
&= S_{\hat{W}_{21}, \hat{W}_{21}}^{2,1,*} (S_{\hat{W}_{21}}^{1,1,*}\otimes 1)(\eta_1\otimes 1\otimes 1) (U_F\otimes 1) \Sigma_{23}  S_{\hat{W}_{31}}^{1,2} F(U_{\beta,12}(\rho_B(b)\otimes y \otimes x)U_{\beta,12}^*)F(\Sigma_{23})F(U_{\beta,12}^*)\\
&=  S_{\hat{W}_{32}\hat{W}_{31}}^{1,2,*}(U_G\otimes 1)(\eta_1\otimes 1 \otimes 1)\Sigma_{23} S_{\hat{W}_{31}}^{1,2} F(U_{\beta,12}(\rho_B(b)\otimes y \otimes x)U_{\beta,12}^*)F(\Sigma_{23})F(U_{\beta,12}^*)\\
&= S_{\hat{W}_{32}\hat{W}_{31}}^{1,2,*}(S_{\hat{W}_{21}}^{1,1}\otimes 1)(G(U_\beta)\otimes 1)(S_{\mathbb{I}}^{1,1,*}\otimes 1)(\eta_1\otimes 1 \otimes 1)\Sigma_{23} S_{\hat{W}_{31}}^{1,2} F(U_{\beta,12}(\rho_B(b)\otimes y \otimes x)U_{\beta,12}^*)F(\Sigma_{23})F(U_{\beta,12}^*)\\
&=S_{\hat{W}_{21}, \hat{W}_{21}}^{2,1,*}(G(U_\beta)\otimes 1)(S_{\mathbb{I}}^{1,1,*}\otimes 1)(\eta_1\otimes 1 \otimes 1)\Sigma_{23} S_{\hat{W}_{31}}^{1,2} F(U_{\beta,12}(\rho_B(b)\otimes y \otimes x)U_{\beta,12}^*)F(\Sigma_{23})F(U_{\beta,12}^*)\\
&=G(U_{\beta,12}) S_{\mathbb{I}, \hat{W}_{21}}^{2,1,*}(S_{\mathbb{I}}^{1,1,*}\otimes 1)(\eta_1\otimes 1 \otimes 1)\Sigma_{23} S_{\hat{W}_{31}}^{1,2} F(U_{\beta,12}(\rho_B(b)\otimes y \otimes x)U_{\beta,12}^*)F(\Sigma_{23})F(U_{\beta,12}^*)\\
&= G(U_{\beta,12}) S_{\hat{W}_{32}}^{1,2,*} \Sigma_{23} (\eta_1\otimes 1 \otimes 1) S_{\hat{W}_{31}}^{1,2} F(U_{\beta,12}(\rho_B(b)\otimes y \otimes x)U_{\beta,12}^*)F(\Sigma_{23})F(U_{\beta,12}^*)\\
&= G(U_{\beta,12})G(\Sigma_{23}) S_{\hat{W}_{31}}^{1,2,*} (\eta_1\otimes 1 \otimes 1) S_{\hat{W}_{31}}^{1,2} F(U_{\beta,12}(\rho_B(b)\otimes y \otimes x)U_{\beta,12}^*)F(\Sigma_{23})F(U_{\beta,12}^*)\\
&= G(U_{\beta,12}) G(\Sigma_{23}) S_{\hat{W}_{21}, \mathbb{I}}^{2,1,*}(\eta_2\otimes 1)S_{\hat{W}_{21}, \mathbb{I}}^{2,1} F(U_{\beta,12}(\rho_B(b)\otimes y \otimes x)U_{\beta,12}^*)F(\Sigma_{23})F(U_{\beta,12}^*)\\
&= G(U_{\beta,12})G(\Sigma_{23})S_{\hat{W}_{21}, \mathbb{I}}^{2,1,*} (\eta_2\otimes 1) (F(U_\beta(\rho_B(b)\otimes y)U_\beta^*)\otimes x)S_{\hat{W}_{21}, \mathbb{I}}^{2,1} F(\Sigma_{23})F(U_{\beta,12}^*)\\
&= G(U_{\beta,12})G(\Sigma_{23})S_{\hat{W}_{21}, \mathbb{I}}^{2,1,*} (G(U_\beta(\rho_B(b)\otimes y)U_\beta^*)\otimes x)(\eta_2\otimes 1)S_{\hat{W}_{21}, \mathbb{I}}^{2,1} F(\Sigma_{23}) F(U_{\beta,12}^*)\\
&= G(U_{\beta,12})G(\Sigma_{23}) G(U_{\beta,12}(\rho_B(b)\otimes y\otimes x)U_{\beta,12}^*)S_{\hat{W}_{21}, \mathbb{I}}^{2,1,*} (\eta_2\otimes 1)S_{\hat{W}_{21}, \mathbb{I}}^{2,1}F(\Sigma_{23})F(U_{\beta,12}^*)\\
&= G(U_{\beta,12})G(U_{\beta,13}(\rho_B(b)\otimes x\otimes y)U_{\beta,13}^*)G(\Sigma_{23}) S_{\hat{W}_{21}, \mathbb{I}}^{2,1,*
}(\eta_2\otimes 1)S_{\hat{W}_{21}, \mathbb{I}}^{2,1}F(\Sigma_{23})F(U_{\beta,12}^*)\\
&= G(U_{\beta,12})G(U_{\beta,13}(\rho_B(b)\otimes x\otimes y)U_{\beta,13}^*)G(\Sigma_{23}) S_{\hat{W}_{31}}^{1,2,*}(\eta_1\otimes 1 \otimes 1)S_{\hat{W}_{31}}^{1,2}F(\Sigma_{23})F(U_{\beta,12}^*)\\
&= G(U_{\beta,12})G(U_{\beta,13}(\rho_B(b)\otimes x\otimes y)U_{\beta,13}^*) S_{\hat{W}_{32}}^{1,2,*}(\eta_1\otimes 1 \otimes 1)S_{\hat{W}_{32}}^{1,2}F(U_{\beta,12}^*)\\
&= G(U_{\beta,12})G(U_{\beta,13}(\rho_B(b)\otimes x\otimes y)U_{\beta,13}^*) S_{\mathbb{I}, \hat{W}_{21}}^{2,1,*}(S_\mathbb{I}^{1,1,*}\otimes 1)(\eta_1\otimes 1 \otimes 1) (S_\mathbb{I}^{1,1}\otimes 1) S_{\mathbb{I}, \hat{W}_{21}}^{2,1}F(U_{\beta,12}^*)\\
&= G(U_{\beta,12})G(U_{\beta,13}(\rho_B(b)\otimes x\otimes y)U_{\beta,13}^*) S_{\mathbb{I}, \hat{W}_{21}}^{2,1,*}(S_\mathbb{I}^{1,1,*}\otimes 1)(\eta_1\otimes 1 \otimes 1) (S_\mathbb{I}^{1,1}\otimes 1) (F(U_{\beta}^*)\otimes 1)S_{\hat{W}_{21}, \hat{W}_{21}}^{2,1}\\
&= G(U_{\beta,12})G(U_{\beta,13}(\rho_B(b)\otimes x\otimes y)U_{\beta,13}^*) S_{\mathbb{I}, \hat{W}_{21}}^{2,1,*}(S_\mathbb{I}^{1,1,*}\otimes 1)(\eta_1\otimes 1 \otimes 1) (U_F^*\otimes 1)(S_{\hat{W}_{21}}^{1,1}\otimes 1)S_{\hat{W}_{21}, \hat{W}_{21}}^{2,1}\\
&= G(U_{\beta,12})G(U_{\beta,13}(\rho_B(b)\otimes x\otimes y)U_{\beta,13}^*) S_{\mathbb{I}, \hat{W}_{21}}^{2,1,*}(S_\mathbb{I}^{1,1,*}\otimes 1) (U_G^*\otimes 1)(\eta_1\otimes 1 \otimes 1)(S_{\hat{W}_{21}}^{1,1}\otimes 1)S_{\hat{W}_{21}, \hat{W}_{21}}^{2,1}\\
&= G(U_{\beta,12})G(U_{\beta,13}(\rho_B(b)\otimes x\otimes y)U_{\beta,13}^*) S_{\mathbb{I}, \hat{W}_{21}}^{2,1,*}(G(U_{\beta}^*)\otimes 1)(S_{\hat{W}_{21}}^{1,1,*}\otimes 1)(\eta_1\otimes 1 \otimes 1)(S_{\hat{W}_{21}}^{1,1}\otimes 1)S_{\hat{W}_{21}, \hat{W}_{21}}^{2,1}\\
&=  G(U_{\beta,12})G(U_{\beta,13}(\rho_B(b)\otimes x\otimes y)U_{\beta,13}^*) G(U_{\beta,12}^*)S_{\hat{W}_{21}, \hat{W}_{21}}^{2,1,*}(S_{\hat{W}_{21}}^{1,1,*}\otimes 1)(\eta_1\otimes 1 \otimes 1)(S_{\hat{W}_{21}}^{1,1}\otimes 1)S_{\hat{W}_{21}, \hat{W}_{21}}^{2,1}\\
&= G(U_{\beta,12}U_{\beta,13}(\rho_B(b)\otimes x \otimes y)U_{\beta,13}^*U_{\beta,12}^*) \eta_3.
\end{align*}

 It follows from the bijectivity of \eqref{Rieffelbijection} that there exists a unique natural transformation $\eta: F\implies G$ such that $\eta_{L^2(B)\lhd ((L^2(\G),\hat{W}_{21})\obot (L^2(\G), \hat{W}_{21}))}= \eta_3$. Making use of an isometric intertwiner $T: (L^2(\G), \hat{W}_{21})\to (L^2(\G), \hat{W}_{21})\obot (L^2(\G), \hat{W}_{21})$, it follows that $\eta_{L^2(B)\lhd (L^2(\G), \hat{W}_{21})} = \eta_2$. Since $L^2(B)\lhd (L^2(\G), \hat{W}_{21})$ generates $\Rep(B)$ and since the $\hat{\G}$-representation $(L^2(\G), \hat{W}_{21})$ generates $\Rep(L^\infty(\G))$, we find 
$\eta \in \Nat_{\Rep(L^\infty(\G))}(F,G)$. We then automatically have $\eta_{L^2(B)}= \eta_1$, by the definition of $\eta_2$. Arguing as in the proof of Theorem \ref{restrictionsisos}, it follows that $\eta \in \Nat_{\Rep(L^\infty(\G))}(F,G)= \Nat_{\Rep(\hat{\G})}(F,G)$. Thus, \eqref{injsur} is surjective, finishing the proof.
\end{proof}

It is worth mentioning the following special case of the equivariant Eilenberg-Watts theorem (take $A= B=\C$, which we view as $\G$-$W^*$-algebras), known to experts in the field:

\begin{Cor} Consider the action $\operatorname{Hilb}\curvearrowleft \Rep(\hat{\G})$ through the forgetful functor. Then we have a canonical equivalence
    $\Rep(\G)\simeq \End_{\Rep(\hat{\G})}(\Hilb)$
   of $W^*$-categories.
\end{Cor}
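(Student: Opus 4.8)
The plan is to derive this as the special case $A=B=\C$ (each equipped with the trivial $\G$-action $\lambda \mapsto \lambda\otimes 1$) of Theorem \ref{main2}, by identifying the two categories appearing there under this specialization.

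First I would check that $\Corr^\G(\C,\C) = \Rep(\G)$ as $W^*$-categories. For a $\G$-$\C$-$\C$-correspondence $(\mcH,\pi,\rho,U)$, the representations $\pi,\rho\colon \C\to B(\mcH)$ are the unique unital ones and carry no information, the relation $\pi(a)\rho(b)=\rho(b)\pi(a)$ is automatic, and both covariance conditions degenerate: since $\alpha,\beta$ are trivial and $\pi(a),\rho(b)$ are scalars, the identities $(\pi\otimes\id)\alpha(a)=U(\pi(a)\otimes 1)U^*$ and $(\rho\otimes R)\beta(b)=U^*(\rho(b)\otimes 1)U$ hold tautologically (using $R(1)=1$, which is immediate since $R(x)=\hat{J}x^*\hat{J}$ and $\hat{J}^2=1$). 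Thus the datum of the correspondence is precisely a unitary $\G$-representation $(\mcH,U)$, and ${}_\C\mathscr{L}_\C^\G(\mcH,\mcH')=\mathscr{L}^\G(\mcH,\mcH')$, giving the identification with $\Rep(\G)$.

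Next I would identify the target. Unital normal $*$-representations of $\C$ are just Hilbert spaces, so $\Rep(\C)=\Hilb$, and I claim that the module structure $\Rep(\C)\curvearrowleft\Rep(\hat{\G})$ from Subsection \ref{4.2} is exactly the forgetful action. Indeed, for $\C$ with the trivial coaction the universal version is $\alpha^u\colon \lambda\mapsto\lambda\otimes 1$, whence $\pi_{\mcH\lhd\mcK}(a)=(\pi_\mcH\otimes\tilde{\phi}_\mcK)(\alpha^u(a))=a\,(1\otimes 1)$; that is, $\mcH\lhd\mcK=\mcH\otimes\mcK$ is the underlying Hilbert-space tensor product, with the $\hat{\G}$-structure on $\mcK$ forgotten. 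This is precisely the content of the third item of the Example in Subsection \ref{4.2}. Hence $\Hilb\curvearrowleft\Rep(\hat{\G})$ is the action through the forgetful functor $\Rep(\hat{\G})\to\Hilb$, and $\Fun_{\Rep(\hat{\G})}(\Rep(\C),\Rep(\C))=\End_{\Rep(\hat{\G})}(\Hilb)$.

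With these two identifications in place, Theorem \ref{main2} furnishes the mutually quasi-inverse equivalences $\hat{P},\hat{Q}$ between $\Corr^\G(\C,\C)$ and $\Fun_{\Rep(\hat{\G})}(\Rep(\C),\Rep(\C))$, which is exactly the asserted equivalence $\Rep(\G)\simeq\End_{\Rep(\hat{\G})}(\Hilb)$. No genuine obstacle arises here; the argument is pure bookkeeping, and the only step meriting attention is matching the module action on $\Hilb$ produced by the general construction via $\alpha^u$ with the forgetful-functor action, which is settled by the Example cited above. For concreteness I would record that under $\hat{P}$ a $\G$-representation $(\mcH,U)$ is sent to the endofunctor $\mcK\mapsto\mcG\boxtimes_\C\mcK\cong\mcH\otimes\mcK$ (tensoring by $\mcH$), equipped with the module coherence unitaries $S_{\mcG,\mcH,\mcK}$ assembled from $\uU^\mcK$ as in the construction of $\hat{P}$.
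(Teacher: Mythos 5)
Your proposal is correct and matches the paper exactly: the corollary is stated there as the special case $A=B=\C$ of Theorem \ref{main2}, and your bookkeeping (identifying $\Corr^\G(\C,\C)=\Rep(\G)$ via triviality of the covariance conditions, and matching the module action on $\Rep(\C)=\Hilb$ with the forgetful action via $\alpha^u(\lambda)=\lambda\otimes 1$, i.e.\ item (3) of the Example in Subsection \ref{4.2}) is precisely the intended argument. The only blemish is the typo $\mcG\boxtimes_\C\mcK$ versus $\mcH\boxtimes_\C\mcK$ in your final sentence, which is immaterial.
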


\begin{Rem}
    Let $L^\infty(\mathbb{H} \backslash \G)$ be a von Neumann subalgebra of $L^\infty(\G)$ satisfying the condition $\Delta(L^\infty(\mathbb{H}\backslash \G))\subseteq L^\infty(\mathbb{H}\backslash \G)\ovot L^\infty(\G)$, i.e.\ $L^\infty(\mathbb{H}\backslash \G)\subseteq L^\infty(\G)$ is a \emph{coideal von Neumann subalgebra}. Considering the dual coideal von Neumann subalgebra $L^\infty(\hat{\mathbb{H}}):= L^\infty(\hat{\G})\cap L^\infty(\mathbb{H} \backslash \G)'\subseteq L^\infty(\hat{\G})$ (cfr.\ \cite{KS14}), it follows from Theorem \ref{main2} that
    $$\Corr^{\hat{\G}}(L^\infty(\hat{\mathbb{H}}), L^\infty(\hat{\mathbb{H}}))\simeq \End_{\Rep(\G)}(\Rep(L^\infty(\hat{\mathbb{H}}))).$$
    This generalizes \cite{DCDT24}*{Theorem 2.10} from the compact to the locally compact setting (note however the switch from left to right module actions).
\end{Rem}

The following result is also clear now (compare with Proposition \ref{Moritaeq}):
\begin{Prop}
    Let $(A,\alpha)$ and $(B, \beta)$ be two $\G$-$W^*$-algebras. The following are equivalent:
    \begin{enumerate}\setlength\itemsep{-0.5em}
        \item $(A, \alpha)\sim_\G (B, \beta)$.
        \item $\Rep(A)$ and $\Rep(B)$ are equivalent as $\Rep(\hat{\G})$-$W^*$-module categories.
    \end{enumerate}
\end{Prop}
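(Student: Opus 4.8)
The plan is to deduce this from the equivalence $\hat{P}\colon\Corr^\G(A,B)\simeq\Fun_{\Rep(\hat{\G})}(\Rep(B),\Rep(A))$ of Theorem \ref{main2}, in exactly the spirit of Proposition \ref{Moritaeq}: under $\hat{P}$ a $\G$-$A$-$B$-Morita correspondence should correspond precisely to an equivalence of $\Rep(\hat{\G})$-module categories. The structural input that makes this work is that $\hat{P}$ (equivalently its quasi-inverse $\hat{Q}$) turns Connes fusion into composition of module functors. Concretely, for $\mcG\in\Corr^\G(A,B)$ and $\mcK\in\Corr^\G(B,C)$, the associativity of the Connes fusion tensor product yields a unitary natural isomorphism $\hat{P}(\mcG)\circ\hat{P}(\mcK)\cong\hat{P}(\mcG\boxtimes_B\mcK)$, while the canonical unitary $\mcG\boxtimes_B L^2(B)\cong\mcG$ gives $\hat{P}(L^2(A))\cong\id_{\Rep(A)}$. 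The first thing I would do is verify that these isomorphisms are not merely isomorphisms of normal $*$-functors but lie in $\Nat_{\Rep(\hat{\G})}$, i.e.\ are compatible with the module-structure unitaries $\{S_{\mcG,\mcH,\mcK}\}$ constructed earlier. Establishing this compatibility is the real content of the argument and the step I expect to be the main obstacle; everything else is formal.

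For $(1)\Rightarrow(2)$, assuming $(A,\alpha)\sim_\G(B,\beta)$, the equivalence of conditions $(1)$ and $(2)$ in Proposition \ref{Moritaeq} furnishes $\mcG\in\Corr^\G(A,B)$ together with unitary isomorphisms $\mcG\boxtimes_B\overline{\mcG}\cong L^2(A)$ and $\overline{\mcG}\boxtimes_A\mcG\cong L^2(B)$ of equivariant correspondences. Applying $\hat{P}$ and using the compatibility above, the normal $\Rep(\hat{\G})$-module $*$-functors $\hat{P}(\mcG)\colon\Rep(B)\to\Rep(A)$ and $\hat{P}(\overline{\mcG})\colon\Rep(A)\to\Rep(B)$ satisfy $\hat{P}(\overline{\mcG})\circ\hat{P}(\mcG)\cong\hat{P}(\overline{\mcG}\boxtimes_A\mcG)\cong\hat{P}(L^2(B))\cong\id_{\Rep(B)}$ and, symmetrically, $\hat{P}(\mcG)\circ\hat{P}(\overline{\mcG})\cong\id_{\Rep(A)}$, with the isomorphisms lying in $\Nat_{\Rep(\hat{\G})}$. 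Hence $\hat{P}(\mcG)$ and $\hat{P}(\overline{\mcG})$ implement an equivalence $\Rep(A)\simeq\Rep(B)$ of $\Rep(\hat{\G})$-module categories.

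For $(2)\Rightarrow(1)$, I would fix an equivalence $F\colon\Rep(B)\to\Rep(A)$ of $\Rep(\hat{\G})$-module categories and set $\mcG_F:=\hat{Q}(F)=F(L^2(B))\in\Corr^\G(A,B)$ as in the preceding proposition. It then suffices to show that $\mcG_F$ is a $\G$-$A$-$B$-Morita correspondence, after which $(1)$ follows by the definition of $\sim_\G$. The direct route, mirroring the argument for $(6)\Rightarrow(1)$ in Proposition \ref{Moritaeq}, is to argue exactly as in the proof of \cite{DR25a}*{Theorem 3.10}: full faithfulness and essential surjectivity of $F$ force $\pi_F$ and $\rho_F$ to be faithful and $\pi_F(A)'=\rho_F(B)$. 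Alternatively, writing $G$ for a module quasi-inverse of $F$ and $\mcG_G:=\hat{Q}(G)$, the compatibility of $\hat{Q}$ with composition gives $\mcG_F\boxtimes_B\mcG_G\cong\hat{Q}(F\circ G)\cong\hat{Q}(\id_{\Rep(A)})\cong L^2(A)$ and $\mcG_G\boxtimes_A\mcG_F\cong L^2(B)$, so that $\mcG_F$ is invertible under Connes fusion and therefore a Morita correspondence. I would emphasize that, in contrast to Proposition \ref{Moritaeq}, no integrability reduction via Lemma \ref{moritaintegrable} is needed here, since Theorem \ref{main2} already holds for arbitrary $\G$-$W^*$-algebras.
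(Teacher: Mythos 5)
Your proposal is correct and follows exactly the route the paper intends: the paper offers no written proof beyond ``The following result is also clear now (compare with Proposition \ref{Moritaeq})'', i.e.\ precisely your argument of transporting invertible correspondences through the equivalence of Theorem \ref{main2} in one direction, and applying $\hat{Q}$ together with the argument of \cite{DR25a}*{Theorem 3.10} (as in the proof of $(6)\implies(1)$ of Proposition \ref{Moritaeq}) in the other. Your closing observation is also accurate: since $L^2(B)\in\Rep(B)$ unconditionally, no integrability reduction via Lemma \ref{moritaintegrable} is needed here, which is exactly why this statement is immediate once Theorem \ref{main2} is in hand.
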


\section{Drinfeld center of  \texorpdfstring{$\Rep(\G)$}{TEXT}}\label{5}

Let $\G$ be a locally compact quantum group. Instead of working with \emph{right} $\G$-representations as in the previous sections, it will turn out to be very convenient to switch conventions and to work with \emph{left} $\G$-representations instead. To be very concrete, in this section $\Rep(\G)$ denotes the $W^*$-tensor category such that:
\begin{itemize}\setlength\itemsep{-0.5em}
    \item Objects are pairs $(\mcH, U_\mcH)$ where $\mcH$ is a Hilbert space and $U_\mcH\in L^\infty(\G)\ovot B(\mcH)$ is a unitary such that $(\Delta \otimes \id)(U_\mcH)= U_{\mcH, 13}U_{\mcH, 23}$. 
    \item If $\mcH, \mcH'\in \Rep(\G)$, we put $\mathscr{L}^{\G}(\mcH, \mcH')= \{x\in B(\mcH, \mcH'): (1\otimes x)U = U'(1\otimes x)\}$ for the associated morphism space of intertwiners.
    \item The tensor product is given by
    $(\mcH, U_\mcH) \otop (\mcK, U_\mcK):= (\mcH\otimes \mcK, U_{\mcH, 12}U_{\mcK, 13}).$
\end{itemize}

When we write $L^2(\G)\in \Rep(\G)$ in this section, we always mean $(L^2(\G),W)$, unless explicitly mentioned otherwise. In these conventions, the assignments  \begin{align*}
    &(\mcH, U)\mapsto [\phi_U= \phi_\mcH: C_0^u(\hat{\G})\to B(\mcH): L_\sharp^1(\G)\ni \omega\mapsto  (\omega \otimes \id)(U)], \quad \phi\mapsto (\id \otimes \phi)(\wW)
\end{align*}
set up a bijection between the unitary $\G$-representations on Hilbert spaces and the non-degenerate $*$-representations of $C_0^u(\hat{\G})$ on Hilbert spaces. It is important to note that we have
$$\phi_{\mcH \otop \mcK}:= (\phi_\mcH \otimes \phi_\mcK)\circ \hat{\Delta}^{u,\opp}, \quad \mcH, \mcK\in \Rep(\G).$$

We denote the Drinfeld double of $\G$ by $D(\G)$ and we write $\mathcal{Z}(\Rep(\G))$ for the Drinfeld center of the $W^*$-tensor category $\Rep(\G)$. The main goal of this section is to prove that there is a canonical isomorphism $\Rep(D(\G))\cong \mathcal{Z}(\Rep(\G))$ of braided $W^*$-tensor categories. This result was already established for compact quantum groups in \cite{NY18}*{Section 3}.  First, we give some preliminaries in which we recall the relevant definitions.

\subsection{Drinfeld double}

Let $\G$ be a locally compact quantum group. Its \emph{Drinfeld double} $D(\G)$ \cites{BV05,NV10} (see also \cite{DCK24}*{Section 2.3}) is the locally compact quantum group with $L^\infty(D(\G)):= L^\infty(\G)\ovot L^\infty(\hat{\G})$ and coproduct given by
$$\Delta_{D(\G)}(z):= W_{32}\Sigma_{23} (\Delta \otimes \hat{\Delta})(z) \Sigma_{23} W_{32}^*, \quad z \in L^\infty(D(\G)).$$

The locally compact quantum groups $\G$ and $\hat{\G}$ are closed quantum subgroups of $D(\G)$ (in the sense of Vaes \cite{DKSS12}) in a canonical way (see e.g.\ \cite{DKV24}*{Lemma 7.13} for details), meaning there are canonical unital, isometric, normal coproduct-preserving $*$-homomorphisms
$$\gamma_{\G\subseteq D(\G)}: L^\infty(\hat{\G})\to L^\infty(\widehat{D(\G)}), \quad \gamma_{\hat{\G}\subseteq D(\G)}: L^\infty(\G)\to L^\infty(\widehat{D(\G)}).$$
They automatically restrict to non-degenerate $*$-homomorphisms (see \cite{DKSS12}*{Theorem 3.3})
$$\gamma_{\G\subseteq D(\G)}: C_0^r(\hat{\G})\to M(C_0^r(\widehat{D(\G)})), \quad \gamma_{\hat{\G}\subseteq D(\G)}: C_0^r(\G)\to M(C_0^r(\widehat{D(\G)})),$$
and these admit the universal versions
$$\gamma_{\G\subseteq D(\G)}^u: C_0^u(\hat{\G})\to M(C_0^u(\widehat{D(\G)})); \quad \gamma_{\widehat{\G}\subseteq D(\G)}^u: C_0^u(\G)\to M(C_0^u(\widehat{D(\G)})),$$
satisfying
$\gamma_{\G\subseteq D(\G)} \circ \pi_{\hat{\G}}= \pi_{\widehat{D(\G)}}\circ \gamma_{\G\subseteq D(\G)}^u$ and $\gamma_{\hat{\G}\subseteq D(\G)} \circ \pi_{\G}= \pi_{\widehat{D(\G)}}\circ \gamma_{\hat{\G}\subseteq D(\G)}^u$.

By a \emph{Yetter-Drinfeld representation} of $\G$, we mean a triple $(\mcH, U_\mcH, \hat{U}_{\mcH})$ such that
$\mcH$ is a Hilbert space,
$U_{\mcH}\in L^\infty(\G)\ovot B(\mcH)$ is a unitary $\G$-representation,  $\hat{U}_{\mcH}\in L^\infty(\hat{\G})\ovot B(\mcH)$ is a unitary $\hat{\G}$-representation 
and the Yetter-Drinfeld commutation relation 
\begin{equation}\label{commutation}W_{12}U_{\mcH, 13} \hat{U}_{\mcH,23}W_{12}^*= \hat{U}_{\mcH, 23}U_{\mcH, 13}
\end{equation}
holds. Note that \eqref{commutation} is an identity in the multiplier $C^*$-algebra $M(C_0^r(\G)\otimes C_0^r(\hat{\G})\otimes \mathcal{K}(\mcH))$.  The assignment 
$(\mcH, U_\mcH, \hat{U}_\mcH)\mapsto U_{\mcH, 13}\hat{U}_{\mcH,23}$ defines a bijection between the Yetter-Drinfeld representations of $\G$ and the $D(\G)$-representations. Its inverse is constructed as follows: if $\mcH\in \Rep(D(\G))$, we obtain the non-degenerate $*$-representation $\phi_{\mcH}\circ \gamma_{\G \subseteq D(\G)}^u: C_0^u(\hat{\G})\to B(\mcH)$ that corresponds to a unitary $\G$-representation on $\mcH$, and similarly $\phi_\mcH \circ \gamma_{\hat{\G}\subseteq D(\G)}^u: C_0^u(\G)\to B(\mcH)$ corresponds to a unitary $\hat{\G}$-representation on $\mcH$.

The above assignment is compatible with tensor products, in the sense that
$$(\mcH, U_\mcH, \hat{U}_\mcH) \otop_{D(\G)} (\mcK, U_\mcK, \hat{U}_{\mcK})= (\mcH\otimes \mcK, U_{\mcH}\otop_\G U_{\mcK}, \hat{U}_{\mcH}\otop_{\hat{\G}} \hat{U}_{\mcK}).$$

\subsection{Drinfeld center} Let $\G$ be a locally compact quantum group. Consider the associated $W^*$-tensor category $\Rep(\G)$ of unitary $\G$-representations on Hilbert spaces. Its \emph{Drinfeld center} $\mathcal{Z}(\Rep(\G))$ is the $W^*$-category of \emph{unitary half-braidings}. More precisely, objects in this category consist of pairs $(\mcH, C_{\mcH, -})$ where $\mcH \in \Rep(\G)$ and $C_{\mcH, -}: \mcH \otop - \implies - \otop \mcH$ is a natural unitary isomorphism in $\Rep(\G)$, such that the braid relation
\begin{equation}\label{braidrelation}
    C_{\mcH, \mcK \otop \mcL}=  (\id_{\mcK}\otimes C_{\mcH, \mcL})\circ (C_{\mcH, \mcK}\otimes \id_{\mcL}), \quad \mcK, \mcL \in \Rep(\G)
\end{equation}
is satisfied.

A morphism from the object $(\mcH, C_{\mcH,-})$ to the object $(\widetilde{\mcH}, D_{\widetilde{\mcH},-})$ consists of an intertwiner $x: \mcH \to \widetilde{\mcH}$ of $\G$-representations such that the diagram 
$$
\begin{tikzcd}
\mcH\otimes \mcK \arrow[d, "x\otimes 1"'] \arrow[rr, "{C_{\mcH, \mcK}}"] &  & \mcK\otimes \mcH \arrow[d, "1\otimes x"] \\
\widetilde{\mcH}\otimes \mcK \arrow[rr, "{D_{\widetilde{\mcH}, \mcK}}"]  &  & \mcK\otimes \widetilde{\mcH}            
\end{tikzcd}$$
commutes for all $\mcK \in \Rep(\G)$.

Given $(\mcH, C_{\mcH, -}),(\widetilde{\mcH}, D_{\widetilde{\mcH}, -})\in \mathcal{Z}(\Rep(\G))$, we also have
$(\mcH \otop \widetilde{\mcH}, (C_{\mcH, -}\otimes \id_{\widetilde{\mcH}})\circ (\id_{\mcH}\otimes D_{\widetilde{\mcH}, -}))\in \mathcal{Z}(\Rep(\G)).$
In this way, $\mathcal{Z}(\Rep(\G))$ becomes a $W^*$-tensor category. Moreover, the unitaries
$$C_{\mcH, \tilde{\mcH}}: (\mcH, C_{\mcH, -})\otimes (\widetilde{\mcH}, D_{\widetilde{\mcH},-})\to (\widetilde{\mcH}, D_{\widetilde{\mcH},-})\otimes (\mcH, C_{\mcH, -})$$
turn $\mathcal{Z}(\Rep(\G))$ into a braided $W^*$-tensor category.

\subsection{From representation to half-braiding.} Let $(\mcH, U_{\mcH}, \hat{U}_{\mcH})\in \Rep(D(\G))$. Given $\mcK \in \Rep(\G)$, we consider the associated non-degenerate $*$-homomorphism $\phi_{\mcK}: C_0^u(\hat{\G})\to B(\mcK)$, as well as the half-lifted version $\hat{\Uu}_\mcH \in M(C_0^u(\hat{\G})\otimes \mathcal{K}(\mcH))$ of $\hat{U}_\mcH \in M(C_0^r(\hat{\G})\otimes \mathcal{K}(\mcH))$. Similarly as in the previous section, we will employ the notation $$\hat{\Uu}^{\mcK}_\mcH:= (\phi_\mcK \otimes \id)(\hat{\Uu}_\mcH)\in B(\mcK \otimes \mcH),$$
and we define the unitaries
$$C_{\mcH, \mcK}:= \hat{\Uu}^\mcK_\mcH \circ \Sigma_{\mcH, \mcK}: \mcH \otimes \mcK \to \mcK \otimes \mcH, \quad \mcK \in \Rep(\G).$$

Let us now verify that $(\mcH, C_{\mcH,-})\in \mathcal{Z}(\Rep(\G))$. It is easily verified that these unitaries are natural in $\mcK\in \Rep(\G)$. Next, we note that $C_{\mcH, \mcK}$ is an intertwiner of $\G$-representations if and only if 
    \begin{equation}\label{to prove}
        \hat{\Uu}^{\mcK}_{\mcH, 23} U_{\mcH, 13} = U_{\mcK, 12} U_{\mcH, 13} \hat{\Uu}^{\mcK}_{\mcH,23}U_{\mcK, 12}^*.
    \end{equation}
We now prove \eqref{to prove}. 
We consider the half-lifted comultiplication
$\hat{\Delta}^{r,u}: C_0^r(\hat{\G}) \to M(C_0^r(\hat{\G})\otimes C_0^u(\hat{\G}))$ uniquely determined by
$\hat{\Delta}^{r,u}\circ \pi_{\hat{\G}}= (\pi_{\hat{\G}}\otimes \id)\circ \hat{\Delta}^u.$
Applying $\id \otimes \hat{\Delta}^{r,u}\otimes \id$ to the identity \eqref{commutation} and using the identities $(\id \otimes \hat{\Delta}^{r,u})(W)= \wW_{13}W_{12}$ and $(\hat{\Delta}^{r,u}\otimes \id)(\hat{U}_{\mcH})= \hat{U}_{\mcH,13}\hat{\Uu}_{\mcH,23}$, we find
\begin{align*}
    \wW_{13}W_{12} U_{\mcH, 14} \hat{U}_{\mcH,24} \hat{\Uu}_{\mcH,34} W_{12}^* \wW_{13}^*=\hat{U}_{\mcH, 24} \hat{\Uu}_{\mcH, 34} U_{\mcH,14}.
\end{align*}
Rearranging and invoking \eqref{commutation}, we find
$$\hat{U}_{\mcH, 24}U_{\mcH,14} W_{12 }  \hat{\Uu}_{\mcH,34} W_{12}^* \wW_{13}^*=W_{12}U_{\mcH, 14} \hat{U}_{\mcH,24} \hat{\Uu}_{\mcH,34} W_{12}^* \wW_{13}^* = \hat{U}_{\mcH, 24} \wW_{13}^* \hat{\Uu}_{\mcH,34} U_{\mcH,14}.$$
Making the obvious cancellations and leaving out the second leg leads to the identity
$$U_{\mcH,13}\hat{\Uu}_{\mcH,23} \wW_{12}^* = \wW_{12}^* \hat{\Uu}_{\mcH,23}U_{\mcH,13}.$$
    Rearranging, applying the map $\id \otimes \phi_{\mcK}\otimes \id$ to this equation, and keeping in mind that $(\id \otimes \phi_{\mcK})(\wW)= U_{\mcK}$, the equation \eqref{to prove} readily follows. 
    
    Finally, given $\mcK, \mcL\in \Rep(\G)$, we compute
    \begin{align*} C_{\mcH,\mcK \otop \mcL} &= (\phi_{\mcK \otop \mcL}\otimes \id)(\hat{\Uu}_{\mcH})\Sigma_{\mcH, \mcK \otimes \mcL}\\
    &= (\phi_\mcK \otimes \phi_{\mcL}\otimes \id)(\hat{\Delta}^{u, \opp} \otimes \id)(\hat{\Uu}_{\mcH})\Sigma_{\mcH, \mcK \otimes \mcL}\\
    &= (\phi_\mcK \otimes \phi_\mcL\otimes \id)(\hat{\Uu}_{\mcH,23}\hat{\Uu}_{\mcH,13})\Sigma_{\mcH, \mcK \otimes \mcL}\\
    &= (\phi_\mcL \otimes \id)(\hat{\Uu}_{\mcH})_{23} (\phi_\mcK\otimes \id)(\hat{\Uu}_{\mcH})_{13} \Sigma_{\mcH, \mcK \otimes \mcL}\\
    &= (\phi_\mcL \otimes \id)(\hat{\Uu}_{\mcH})_{23}(1\otimes \Sigma_{\mcH, \mcL})(1\otimes \Sigma_{\mcH, \mcL}^*)(\phi_\mcK \otimes \id)(\hat{\Uu}_{\mcH})_{13} \Sigma_{\mcH, \mcK \otimes \mcL}= (1 \otimes C_{\mcH, \mcL})(C_{\mcH, \mcK}\otimes 1).
    \end{align*}
We conclude that $(\mcH, C_{\mcH, -})\in \mathcal{Z}(\Rep(\G))$. If $(\mcH, U_{\mcH}, \hat{U}_{\mcH}), (\widetilde{\mcH}, U_{\widetilde{\mcH}}, \hat{U}_{\widetilde{\mcH}})\in \Rep(D(\G))$, and $x: \mcH \to \widetilde{\mcH}$ is an intertwiner of $D(\G)$-representations, then $x$ is both an intertwiner of the corresponding $\G$-representations and $\hat{\G}$-representations. From this, we deduce that $x$ is a morphism in $\mathcal{Z}(\Rep(\G))$ between the corresponding unitary half-braidings. Thus, we obtain a  canonical functor
$$F: \Rep(D(\G))\to \mathcal{Z}(\Rep(\G))$$
that is the identity on morphism spaces.

\subsection{From half-braiding to representation.}

We will require the following lemma:

\begin{Lem}\label{trivialbraiding}
    Let $(\mcH, C_{\mcH, -})\in \mathcal{Z}(\Rep(\G))$. If $\mcK$ is a Hilbert space, endowed with the trivial $\G$-representation, then $C_{\mcH, \mcK}= \Sigma_{\mcH, \mcK}$.
\end{Lem}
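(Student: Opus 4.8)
The plan is to first pin down the half-braiding against the one-dimensional trivial representation $\C$, and then to bootstrap to an arbitrary trivial representation $\mcK$ by naturality.

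First I would treat the case $\mcK = \C$, equipped with the trivial $\G$-representation. Under the canonical unitary identifications $\mcH \otop \C \cong \mcH \cong \C \otop \mcH$ — legitimate because the $\C$-leg contributes only $1$, so that the underlying $\G$-representation on both sides is just $U_\mcH$ — the unitary $C_{\mcH, \C}$ becomes a unitary $c \in \mathscr{L}^\G(\mcH)$. Applying the braid relation \eqref{braidrelation} with $\mcK = \mcL = \C$, so that $\mcK \otop \mcL \cong \C$ as well, and chasing the same identifications, the left-hand side is $C_{\mcH, \C \otop \C} \cong c$, while the right-hand side $(\id_{\C}\otimes C_{\mcH,\C}) \circ (C_{\mcH, \C} \otimes \id_{\C})$ collapses to $c \circ c$. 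Hence $c = c^2$, and since $c$ is unitary this forces $c = \id_{\mcH}$, i.e. $C_{\mcH, \C} = \Sigma_{\mcH, \C}$.

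Next I would pass to an arbitrary Hilbert space $\mcK$ carrying the trivial $\G$-representation. Fixing an orthonormal basis $\{e_i\}_{i\in I}$ of $\mcK$, the isometries $\iota_i \colon \C \to \mcK$, $1 \mapsto e_i$, are morphisms in $\Rep(\G)$, since any bounded linear map between trivial representations automatically intertwines. Naturality of the half-braiding $C_{\mcH, -}$ in its second argument then gives $(\iota_i \otimes \id_{\mcH}) \circ C_{\mcH, \C} = C_{\mcH, \mcK} \circ (\id_{\mcH} \otimes \iota_i)$, whereas naturality of the flip on the underlying category of Hilbert spaces gives $(\iota_i \otimes \id_{\mcH}) \circ \Sigma_{\mcH, \C} = \Sigma_{\mcH, \mcK} \circ (\id_{\mcH} \otimes \iota_i)$. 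Combining these two relations with the equality $C_{\mcH, \C} = \Sigma_{\mcH, \C}$ established in the first step shows that $C_{\mcH, \mcK}$ and $\Sigma_{\mcH, \mcK}$ agree on the range of $\id_{\mcH} \otimes \iota_i$, namely $\mcH \otimes \C e_i$, for every $i \in I$. Since $\mcH \otimes \mcK = \bigoplus_{i \in I} \mcH \otimes \C e_i$, I conclude $C_{\mcH, \mcK} = \Sigma_{\mcH, \mcK}$.

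The only point demanding genuine care is the bookkeeping of the unitor identifications $\mcH \otop \C \cong \mcH$ in the first step, where one must verify that the braid relation really does degenerate to the idempotency $c = c^2$ rather than to some associator-twisted variant; the coherence of the standard symmetric monoidal structure on Hilbert spaces makes this harmless. Once this is in place, the second step is a routine direct-sum argument, so I do not anticipate any serious obstacle.
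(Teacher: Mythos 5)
Your proof is correct, and while it shares the paper's two-step skeleton (settle the one-dimensional case first, then extend along intertwiners $\C\to\mcK$), the execution of the key step is genuinely different. The paper begins with a commutant argument: naturality of $C_{\mcH,\mcK}$ under all of $B(\mcK)\otimes 1$ (every bounded operator on a trivial representation intertwines) shows $C_{\mcH,\mcK}\Sigma_{\mcK,\mcH}\in (B(\mcK)\otimes 1)' = 1\otimes B(\mcH)$, so $C_{\mcH,\mcK}=(1\otimes y)\Sigma_{\mcH,\mcK}$ for a unitary $y\in B(\mcH)$; it then kills $y$ in the case $\mcK=\C$ by braiding $\C$ against an \emph{arbitrary} $\mcL\in\Rep(\G)$ --- a diagram chase combining \eqref{braidrelation} with naturality yields $C_{\mcH,\mcL}(y\otimes 1)=C_{\mcH,\mcL}$, and unitarity of $C_{\mcH,\mcL}$ forces $y=1$. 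You instead braid $\C$ against itself: writing $C_{\mcH,\C}(\xi\otimes 1)=1\otimes c\xi$, the braid relation with $\mcK=\mcL=\C$, together with naturality applied to the unitor $\C\otimes\C\cong\C$ (an intertwiner of trivial representations), degenerates to $c=c^2$, whence $c=1$ by unitarity. This is more economical: it never leaves the trivial representations and avoids the commutant computation altogether, at the negligible cost of the unitor bookkeeping you flag, which is indeed harmless since those unitors intertwine trivial representations and naturality absorbs them. Your second step --- computing $C_{\mcH,\mcK}(\xi\otimes e_i)=e_i\otimes\xi$ from naturality under each isometry $\iota_i:\C\to\mcK$ and concluding by totality of the vectors $\xi\otimes e_i$ in $\mcH\otimes\mcK$ --- is the same mechanism as the paper's (which, having already the form $(1\otimes y)\Sigma_{\mcH,\mcK}$, needs only a single isometry $t:\C\to\mcK$ to force $t\otimes y = t\otimes 1$); both versions are sound.
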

\begin{proof} If $x\in B(\mcK)$, then by naturality of $C_{\mcH, \mcK}$ in the second variable, we find
$$C_{\mcH, \mcK}\Sigma_{\mcK, \mcH}(x\otimes 1) = C_{\mcH, \mcK}(1\otimes x)\Sigma_{\mcK, \mcH} = (x\otimes 1)C_{\mcH, \mcK}\Sigma_{\mcK, \mcH}.$$
From this, it follows that $C_{\mcH, \mcK}= (1\otimes y)\Sigma_{\mcH, \mcK}$ for some unitary $y\in B(\mcH)$. We now prove that $y = 1$. We first do this in case that $\mcK= \C$ is one-dimensional. In that case, writing $C_{\mcH, \C}= (1\otimes y)\Sigma_{\mcH, \C}$, we consider the following diagram, where $\mcL\in \Rep(\G)$ is fixed:
$$
\begin{tikzcd}
\mcH \otimes \mcL \arrow[rrrr, "y\otimes 1"] \arrow[rd, "\cong", no head] \arrow[rrddd, "{C_{\mcH, \mcL}}"', bend right] &                                                                                                               &                                                          &                                                                     & \mcH \otimes \mcL \arrow[ld, "\cong"', no head] \arrow[llddd, "{C_{\mcH, \mcL}}", bend left] \\
   & \mcH \otimes \C \otimes \mcL \arrow[rr, "{C_{\mcH, \C}\otimes 1}"] \arrow[rd, "{C_{\mcH, \C \otop \mcL}}"'] &  & \C\otimes \mcH \otimes \mcL \arrow[ld, "{1\otimes C_{\mcH, \mcL}}"] &              \\
&              & \C \otimes \mcL \otimes \mcH \arrow[d, "\cong", no head] &            &             \\
 &   & \mcL\otimes \mcH        &       &    \end{tikzcd}$$
Here, the middle triangle commutes by the braid relation \eqref{braidrelation} and the left outer subdiagram commutes by naturality in the second variable. The two remaining outer subdiagrams commute trivially. The take-away from this is that the outer part of the diagram commutes, forcing $y=1$. 

Back to the general case, we fix an isometry $t: \C \to \mcK$, which is of course an intertwiner of $\G$-representations. By naturality in the second variable, we have a commutative diagram
$$
\begin{tikzcd}
\mcH\otimes \mcK \arrow[rr, "C_{\mcH, \mcK}=(1\otimes y)\Sigma"]           &  & \mcK\otimes \mcH                        \\
\mcH\otimes \C \arrow[u, "1\otimes t"] \arrow[rr, "C_{\mcH, \C}=\Sigma"] &  & \C\otimes \mcH \arrow[u, "t\otimes 1"']
\end{tikzcd}$$
which forces $t\otimes y = t\otimes 1$, whence $y=1$.\end{proof}

If $(\mcH, C_{\mcH, -})\in \mathcal{Z}(\Rep(\G))$, let us verify that $(\mcH, U_{\mcH}, \hat{U}_{\mcH}:= C_{\mcH, L^2(\G)}\circ \Sigma_{L^2(\G),\mcH}) \in \Rep(D(\G))$. Indeed, we first note that if $\hat{x}'\in L^\infty(\hat{\G})'=\mathcal{L}^{\G}(L^2(\G))$, then by naturality in the second variable
\begin{align*}
    \hat{U}_{\mcH}(\hat{x}'\otimes 1)&=C_{\mcH, L^2(\G)}(1\otimes \hat{x}')\Sigma_{L^2(\G), \mcH}= (\hat{x}'\otimes 1)C_{\mcH, L^2(\G)}\Sigma_{L^2(\G),\mcH}= (\hat{x}'\otimes 1)\hat{U}_{\mcH}, 
\end{align*}
and it follows that $\hat{U}_{\mcH}\in L^\infty(\hat{\G})\ovot B(\mcH).$ Next, we verify that $\hat{U}_{\mcH}$ is a $\hat{\G}$-representation. To do this, we calculate
\begin{align*}
    \hat{U}_{\mcH, 23} \hat{U}_{\mcH, 13} W_{12}&= (1\otimes C_{\mcH, L^2(\G)})(1\otimes \Sigma)C_{\mcH, L^2(\G),13} \Sigma_{13} W_{12}\\
    &= (1\otimes C_{\mcH, L^2(\G)})(C_{\mcH, L^2(\G)}\otimes 1)(\Sigma \otimes 1)(1\otimes \Sigma)W_{12}\\
    &= C_{\mcH, L^2(\G)\otop L^2(\G)} (\Sigma\otimes 1)(1\otimes \Sigma)W_{12}\\
    &= C_{\mcH, L^2(\G)\otop L^2(\G)}W_{23} (\Sigma\otimes 1)(1\otimes \Sigma)\\
    &= W_{12} C_{\mcH, (L^2(\G),W) \otop (L^2(\G), \mathbb{I})} (\Sigma\otimes 1)(1\otimes \Sigma)\\
    &= W_{12}(1\otimes \Sigma)(C_{\mcH, L^2(\G)}\otimes 1)(\Sigma\otimes 1)(1\otimes \Sigma)= W_{12} \hat{U}_{\mcH, 13}.
\end{align*}
Here, the third equality follows from the braid equation \eqref{braidrelation}. The fifth equation follows from the fact that $W \in \mathcal{L}^\G((L^2(\G),W) \otop (L^2(\G),\mathbb{I}), (L^2(\G),W)\otop (L^2(\G), W))$ and the naturality in the second variable. The sixth equality follows from the braid relation \eqref{braidrelation} combined with Lemma \ref{trivialbraiding}.

Finally, from the fact that $C_{\mcH, L^2(\G)}: \mcH\otimes L^2(\G)\to L^2(\G)\otimes \mcH$ is an intertwiner of $\G$-representations, it follows that
\begin{align*}
    W_{12}U_{\mcH, 13}\hat{U}_{\mcH, 23}W_{12}^*&= W_{12} U_{\mcH, 13} (1\otimes C_{\mcH, L^2(\G)})(1\otimes \Sigma)W_{12}^*\\
    &= (1\otimes C_{\mcH, L^2(\G)})U_{\mcH, 12}W_{13}(1\otimes \Sigma)W_{12}^*\\
    &= (1\otimes C_{\mcH, L^2(\G)})U_{\mcH, 12} (1\otimes \Sigma)\\
    &= (1\otimes C_{\mcH, L^2(\G)}) (1\otimes \Sigma) U_{\mcH, 13}= \hat{U}_{\mcH, 23} U_{\mcH, 13},
\end{align*}
so that $(\mcH, U_{\mcH}, \hat{U}_{\mcH})\in \Rep(D(\G))$.

If $(\mcH, C_{\mcH,-}), (\widetilde{\mcH}, D_{\widetilde{\mcH}, -})\in \mathcal{Z}(\Rep(\G))$ and if $x: \mcH \to \widetilde{\mcH}$ is a morphism between these objects in $\mathcal{Z}(\Rep(\G))$ (which, by definition, is an intertwiner of $\G$-representations), then we also have
\begin{align*}
    (1\otimes x) C_{\mcH, L^2(\G)}\Sigma = D_{\widetilde{\mcH}, L^2(\G)}(x\otimes 1)\Sigma= D_{\widetilde{\mcH}, L^2(\G)} \Sigma (1\otimes x),
\end{align*}
so that $x$ is also an intertwiner of the associated $\hat{\G}$-representations. As such, $x$ is an intertwiner of the associated $D(\G)$-representations. It is then clear that we obtain a functor
$$G: \mathcal{Z}(\Rep(\G))\to \Rep(D(\G))$$
which acts identically on morphisms.

\subsection{Isomorphism of W*-tensor categories}

\begin{Lem}\label{equality}
    If $(\mcH, C_{\mcH, -}), (\mcH, D_{\mcH, -})\in \mathcal{Z}(\Rep(\G))$ and $C_{\mcH, L^2(\G)}= D_{\mcH, L^2(\G)}$, then $C_{\mcH, \mcK}= D_{\mcH, \mcK}$ for all $\mcK\in \Rep(\G)$.
\end{Lem}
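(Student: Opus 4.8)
The plan is to show that a unitary half-braiding $C_{\mcH,-}$ is completely reconstructed from its single value $C_{\mcH,L^2(\G)}$, using the braid relation \eqref{braidrelation}, Lemma \ref{trivialbraiding} and Fell's absorption principle (the left-handed analogue of Proposition \ref{Fell absorption}); applying this to both $C$ and $D$ and invoking $C_{\mcH,L^2(\G)}=D_{\mcH,L^2(\G)}$ then gives the claim. One should first note that a naive ``generator'' argument fails: $(L^2(\G),W)$ is \emph{not} a generator of $\Rep(\G)$ via subobjects of direct sums, since an arbitrary $\mcK$ corresponds to a possibly non-reduced representation of $C_0^u(\hat{\G})$. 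Hence the tensor and braided structure must be used essentially, not just naturality.

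First I would handle the trivially-acted factor. For $\mcK\in\Rep(\G)$, the braid relation \eqref{braidrelation} applied with second variable $(L^2(\G),W)\otop(\mcK,\mathbb{I})$ gives
\[
C_{\mcH,(L^2(\G),W)\otop(\mcK,\mathbb{I})}=(\id_{L^2(\G)}\otimes C_{\mcH,(\mcK,\mathbb{I})})\,(C_{\mcH,L^2(\G)}\otimes\id_\mcK),
\]
and Lemma \ref{trivialbraiding} identifies $C_{\mcH,(\mcK,\mathbb{I})}=\Sigma_{\mcH,\mcK}$. Therefore
\[
C_{\mcH,(L^2(\G),W)\otop(\mcK,\mathbb{I})}=(\id_{L^2(\G)}\otimes\Sigma_{\mcH,\mcK})(C_{\mcH,L^2(\G)}\otimes\id_\mcK)
\]
depends only on $C_{\mcH,L^2(\G)}$.

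Next I would use Fell's absorption to produce a $\G$-equivariant unitary $\Phi_\mcK\colon(L^2(\G),W)\otop(\mcK,\mathbb{I})\to(L^2(\G),W)\otop(\mcK,U_\mcK)=L^2(\G)\otop\mcK$ whose definition involves only $\mcK$ (concretely $U_\mcK$, viewed as acting on $L^2(\G)\otimes\mcK$ through the standard representation of $L^\infty(\G)$ on $L^2(\G)$), and crucially \emph{not} the half-braiding. Naturality of $C_{\mcH,-}$ in the second variable applied to $\Phi_\mcK$ yields
\[
C_{\mcH,L^2(\G)\otop\mcK}=(\Phi_\mcK\otimes\id_\mcH)\circ C_{\mcH,(L^2(\G),W)\otop(\mcK,\mathbb{I})}\circ(\id_\mcH\otimes\Phi_\mcK)^{-1},
\]
so by the previous step $C_{\mcH,L^2(\G)\otop\mcK}$ is again determined by $C_{\mcH,L^2(\G)}$. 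Finally, the braid relation \eqref{braidrelation} with second variable $L^2(\G)\otop\mcK$ gives $\id_{L^2(\G)}\otimes C_{\mcH,\mcK}=C_{\mcH,L^2(\G)\otop\mcK}\,(C_{\mcH,L^2(\G)}\otimes\id_\mcK)^{-1}$; since amplification $T\mapsto\id_{L^2(\G)}\otimes T$ is injective, $C_{\mcH,\mcK}$ is determined by $C_{\mcH,L^2(\G)}$. Running this for both $C$ and $D$ and using the hypothesis then forces $C_{\mcH,\mcK}=D_{\mcH,\mcK}$ for every $\mcK$.

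The main obstacle is the middle step: making the Fell-absorption isomorphism $\Phi_\mcK$ precise in the left-representation conventions of this section and verifying it is a genuine $\G$-intertwiner depending only on $\mcK$ and $W$. Concretely, this amounts to checking $\Phi_{\mcK,23}\,W_{12}\,\Phi_{\mcK,23}^{*}=W_{12}U_{\mcK,13}$, which follows from the corepresentation identity $(\Delta\otimes\id)(U_\mcK)=U_{\mcK,13}U_{\mcK,23}$ together with $W$ implementing the coproduct. This is routine but requires careful bookkeeping of the leg numbering after the left/right convention switch made at the start of the section.
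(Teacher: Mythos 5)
Your proposal is correct, and every step checks out, but it takes a genuinely different (more explicit) route than the paper's proof. Your key unitary is exactly right: in the left conventions, $\Phi_\mcK := U_\mcK$, acting on $L^2(\G)\otimes \mcK$ through $L^\infty(\G)\subseteq B(L^2(\G))$, satisfies $U_{\mcK,23}W_{12}U_{\mcK,23}^* = W_{12}U_{\mcK,13}$, since $W^*(1\otimes x)W=\Delta(x)$ and $(\Delta\otimes\id)(U_\mcK)=U_{\mcK,13}U_{\mcK,23}$; so the step you flag as the main obstacle is indeed routine. The paper argues more abstractly: since $L^2(\G)$ is a generator of the subcategory $\Rep(L^\infty(\hat{\G}))\subseteq \Rep(\G)$, naturality (via isometries into direct sums of copies of $L^2(\G)$) already forces $C_{\mcH,\mcK}=D_{\mcH,\mcK}$ for all $\mcK\in\Rep(L^\infty(\hat{\G}))$; then for arbitrary $\mcK\in\Rep(\G)$ the ideal property puts $\mcK\otop L^2(\G)$ in $\Rep(L^\infty(\hat{\G}))$, and a single application of the braid relation, $C_{\mcH,\mcK\otop L^2(\G)}=(1\otimes C_{\mcH,L^2(\G)})(C_{\mcH,\mcK}\otimes 1)$, lets one cancel the unitary $1\otimes C_{\mcH,L^2(\G)}$ and conclude — with no use of Lemma \ref{trivialbraiding} and no explicit Fell unitary (the Fell absorption is hidden inside the ideal property). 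Both proofs share the same skeleton — establish agreement on an $L^2(\G)$-amplification of $\mcK$, then cancel via \eqref{braidrelation} — but you amplify on the left and compute the amplified braiding explicitly from the trivial representation (via Lemma \ref{trivialbraiding}) and $\Phi_\mcK$, whereas the paper amplifies on the right and gets agreement abstractly from the generator property. The paper's version is shorter; yours avoids the generator/direct-sum machinery and yields strictly more information, namely an explicit reconstruction formula expressing $C_{\mcH,\mcK}$ in terms of $C_{\mcH,L^2(\G)}$ and $U_\mcK$ alone — which in effect re-derives that every half-braiding has the form $\hat{\Uu}^{\mcK}_{\mcH}\circ\Sigma_{\mcH,\mcK}$, anticipating Theorem \ref{main3} rather than merely proving uniqueness.
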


\begin{proof} Since $L^2(\G)$ is a generator for $\Rep(L^\infty(\hat{\G}))\subseteq \Rep(\G)$, it follows that 
$C_{\mcH, \mcK}= D_{\mcH, \mcK}$ for all $\mcK \in \Rep(L^\infty(\hat{\G})).$
If $\mcK \in \Rep(\G)$, then $\mcK\otop L^2(\G)\in \Rep(L^\infty(\hat{\G}))$, and thus the braid relation \eqref{braidrelation} implies that
$$(1\otimes C_{\mcH, L^2(\G)}) (C_{\mcH, \mcK}\otimes 1)= C_{\mcH,\mcK \otop L^2(\G)}= D_{\mcH, \mcK\otop L^2(\G)}=(1\otimes D_{\mcH, L^2(\G)}) (D_{\mcH, \mcK}\otimes 1)= (1\otimes C_{\mcH, L^2(\G)})(D_{\mcH, \mcK}\otimes 1),$$
from which it follows that $C_{\mcH, \mcK}= D_{\mcH, \mcK}.$
\end{proof}

\begin{Theorem}\label{main3}
The functors $F$ and $G$ are inverse to each other, i.e. $F\circ G = \id_{\mathcal{Z}(\Rep(\G))}$ and $G\circ F = \id_{\Rep(D(\G))}.$ Moreover, $F\circ \otop = \otimes \circ (F\times F)$ and $G\circ \otimes= \otop \circ (G\times G)$.  Thus, $\mathcal{Z}(\Rep(\G))\cong \Rep(D(\G))$ as $W^*$-tensor categories.
\end{Theorem}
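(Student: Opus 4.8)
The plan is to exploit that both $F$ and $G$ are the identity on morphism spaces and fix the underlying Hilbert space of every object, so that the whole theorem reduces to a handful of concrete operator identities. The computation that drives everything is the identification of the $*$-representation $\phi_{L^2(\G)}\colon C_0^u(\hat{\G})\to B(L^2(\G))$ attached to $(L^2(\G),W)\in\Rep(\G)$ with the canonical surjection $\pi_{\hat{\G}}\colon C_0^u(\hat{\G})\to C_0^r(\hat{\G})$ (viewed inside $B(L^2(\G))$). Indeed, $\phi_{L^2(\G)}(\omega)=(\omega\otimes\id)(W)$ for $\omega\in L^1_\sharp(\G)$, while $(\id\otimes\pi_{\hat{\G}})(\wW)=W$ forces $\pi_{\hat{\G}}(\omega)=(\omega\otimes\id)(W)$ on the dense subalgebra $L^1_\sharp(\G)$, whence $\phi_{L^2(\G)}=\pi_{\hat{\G}}$ by continuity. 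Since $\hat{\Uu}_\mcH$ is by definition the half-lift of $\hat{U}_\mcH$, i.e.\ $(\pi_{\hat{\G}}\otimes\id)(\hat{\Uu}_\mcH)=\hat{U}_\mcH$, I obtain the key identity $\hat{\Uu}^{L^2(\G)}_\mcH=\hat{U}_\mcH$.

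First I would check $G\circ F=\id_{\Rep(D(\G))}$. Starting from $(\mcH,U_\mcH,\hat{U}_\mcH)$, the functor $F$ keeps the $\G$-representation $U_\mcH$ and outputs $C_{\mcH,\mcK}=\hat{\Uu}^{\mcK}_\mcH\circ\Sigma_{\mcH,\mcK}$; applying $G$ returns the same $U_\mcH$ together with $C_{\mcH,L^2(\G)}\circ\Sigma_{L^2(\G),\mcH}=\hat{\Uu}^{L^2(\G)}_\mcH\circ\Sigma_{\mcH,L^2(\G)}\circ\Sigma_{L^2(\G),\mcH}=\hat{\Uu}^{L^2(\G)}_\mcH=\hat{U}_\mcH$, using $\Sigma_{\mcH,L^2(\G)}\Sigma_{L^2(\G),\mcH}=\id$ and the key identity. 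For $F\circ G=\id_{\mathcal{Z}(\Rep(\G))}$ I would start from $(\mcH,C_{\mcH,-})$, so $G$ produces $\hat{U}_\mcH=C_{\mcH,L^2(\G)}\circ\Sigma_{L^2(\G),\mcH}$ and $F$ then produces a half-braiding $C'_{\mcH,-}$; at $L^2(\G)$ one computes $C'_{\mcH,L^2(\G)}=\hat{\Uu}^{L^2(\G)}_\mcH\circ\Sigma_{\mcH,L^2(\G)}=\hat{U}_\mcH\circ\Sigma_{\mcH,L^2(\G)}=C_{\mcH,L^2(\G)}$, and Lemma \ref{equality} upgrades this single equality at $L^2(\G)$ to $C'_{\mcH,\mcK}=C_{\mcH,\mcK}$ for all $\mcK\in\Rep(\G)$.

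Next I would establish the tensor compatibility; it is cleanest to verify $G\circ\otimes=\otop\circ(G\times G)$ and then deduce $F\circ\otop=\otimes\circ(F\times F)$ from the fact that $F$ and $G$ are mutually inverse bijections on objects. Given $(\mcH,C_{\mcH,-}),(\mcK,D_{\mcK,-})\in\mathcal{Z}(\Rep(\G))$, the tensor-product half-braiding $E_{\mcH\otop\mcK,-}=(C_{\mcH,-}\otimes\id_\mcK)\circ(\id_\mcH\otimes D_{\mcK,-})$ lives over the $\G$-representation $U_\mcH\otop U_\mcK$, which already matches the $\G$-part of the $D(\G)$-tensor, so only the $\hat{\G}$-parts remain. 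Applying $G$ to $E$ and evaluating at $L^2(\G)$, I would substitute $C_{\mcH,L^2(\G)}=\hat{U}_\mcH\circ\Sigma_{\mcH,L^2(\G)}$ and $D_{\mcK,L^2(\G)}=\hat{U}_\mcK\circ\Sigma_{\mcK,L^2(\G)}$ and bookkeep the flips on $L^2(\G)\otimes\mcH\otimes\mcK$; a short computation gives $E_{\mcH\otop\mcK,L^2(\G)}\circ\Sigma_{L^2(\G),\mcH\otimes\mcK}=\hat{U}_{\mcH,12}\hat{U}_{\mcK,13}=\hat{U}_\mcH\otop_{\hat{\G}}\hat{U}_\mcK$. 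As the bijection between Yetter-Drinfeld and $D(\G)$-representations is compatible with tensor products, this identifies $G\big((\mcH,C_{\mcH,-})\otimes(\mcK,D_{\mcK,-})\big)$ with $G(\mcH,C_{\mcH,-})\otop_{D(\G)}G(\mcK,D_{\mcK,-})$.

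Finally I would assemble the conclusion. The functor $F$ is a bijection on objects with inverse $G$ and is the identity on morphism spaces, hence a fully faithful normal $*$-functor that is bijective on objects; by the previous paragraph it is strictly monoidal, and since the associators in both $W^*$-tensor categories are the Hilbert-space associators treated as identities, $F$ is an isomorphism of $W^*$-tensor categories, yielding $\mathcal{Z}(\Rep(\G))\cong\Rep(D(\G))$. The main obstacle is the bookkeeping with the flip maps $\Sigma$ in the tensor-compatibility step, together with pinning down the universal-representation conventions so that $\phi_{L^2(\G)}=\pi_{\hat{\G}}$ and hence $\hat{\Uu}^{L^2(\G)}_\mcH=\hat{U}_\mcH$; once this identity is in hand and Lemma \ref{equality} is used to reduce every half-braiding identity to its value at $L^2(\G)$, no genuinely hard analytic input remains.
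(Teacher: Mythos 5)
Your proposal is correct and takes essentially the same route as the paper: the key identity $\phi_{L^2(\G)}=\pi_{\hat{\G}}$, hence $\hat{\Uu}^{L^2(\G)}_\mcH=\hat{U}_\mcH$, yields $G\circ F=\id_{\Rep(D(\G))}$ directly and, combined with Lemma \ref{equality} applied to the single equality $C'_{\mcH,L^2(\G)}=C_{\mcH,L^2(\G)}$, yields $F\circ G=\id_{\mathcal{Z}(\Rep(\G))}$, exactly as in the paper. The only difference is that you explicitly carry out the tensor-compatibility verification (the computation $E_{\mcH\otop\mcK,L^2(\G)}\circ\Sigma_{L^2(\G),\mcH\otimes\mcK}=\hat{U}_{\mcH,12}\hat{U}_{\mcK,13}$ for $G$, with the identity for $F$ deduced formally from invertibility), a step the paper declares routine and omits; your computation is correct.
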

\begin{proof}
    Starting from $(\mcH, U_{\mcH}, \hat{U}_{\mcH})\in \Rep(D(\G))$, we associate to it
$(\mcH, \hat{\Uu}_{\mcH}^{-}\circ\Sigma_{\mcH, -})\in \mathcal{Z}(\Rep(\G))$, which in turn gives rise to the $\hat{\G}$-representation
$$\hat{\Uu}_\mcH^{L^2(\G)}\circ \Sigma_{\mcH, L^2(\G)}\circ \Sigma_{L^2(\G), \mcH}= (\phi_{L^2(\G)}\otimes \id)(\hat{\Uu}_{\mcH})= (\pi_{\hat{\G}}\otimes \id)(\hat{\Uu}_{\mcH})= \hat{U}_{\mcH}.$$
Therefore, $G\circ F = \id_{\Rep(D(\G))}.$

Conversely, given $(\mcH, C_{\mcH, -})\in \mathcal{Z}(\Rep(\G))$, the associated $\hat{\G}$-representation
$\hat{U}_{\mcH}:= C_{\mcH, L^2(\G)} \Sigma_{L^2(\G), \mcH}$ induces the unitary half-braiding 
$(\mcH, \hat{\Uu}_{\mcH}^{-}\circ \Sigma_{\mcH, -})$.
We then have
$$\hat{\Uu}^{L^2(\G)}_\mcH \Sigma_{\mcH, L^2(\G)}=(\phi_{L^2(\G)}\otimes \id)(\hat{\Uu}_\mcH)\Sigma_{\mcH, L^2(\G)}= (\pi_{\hat{\G}}\otimes \id)(\hat{\Uu}_{\mcH}) \Sigma_{\mcH, L^2(\G)}= \hat{U}_{\mcH}\Sigma_{\mcH, L^2(\G)}= C_{\mcH, L^2(\G)},$$
so Lemma \ref{equality} implies that $F\circ G = \id_{\mathcal{Z}(\Rep(\G))}$. Verifying the identities $F\circ \otop= \otimes \circ (F\times F)$ and $G\circ \otimes= \otop \circ (G\times G)$ is routine.
\end{proof}

\subsection{Compatibility with the braiding.}

As we have seen, $\mathcal{Z}(\Rep(\G))$ is naturally a braided $W^*$-tensor category. On the other hand, the locally compact quantum group $D(\G)$ is quasi-triangular with respect to the $R$-matrix
$$\hat{R}:= (\gamma_{\G\subseteq D(\G)}\otimes \gamma_{\hat{\G}\subseteq D(\G)})(\hat{W}) \in M(C_0^r(\widehat{D(\G)})\otimes C_0^r(\widehat{D(\G)}))$$
(see \cite{DCK24}*{Remark 2.4, Proposition 2.7}). Considering its universal lift $\hat{R}^u \in M(C_0^u(\widehat{D(\G)})\otimes C_0^u(\widehat{D(\G)}))$, uniquely determined by $(\pi_{\widehat{D(\G)}}\otimes \pi_{\widehat{D(\G)}})(\hat{R}^u) = \hat{R}$ \cite{MW12}*{Proposition 4.14}, it follows from \cite{DCK24}*{Proposition 3.4} that the unitaries
\begin{equation}\label{braidingspecific}
    (\phi_{\mcK}\otimes \phi_\mcH)(\hat{R}^u)\circ \Sigma_{\mcH, \mcK}: \mcH \otop \mcK\to \mcK\otop \mcH, \quad \mcH, \mcK\in \Rep(D(\G))
    \end{equation}
turn $\Rep(D(\G))$ into a braided $W^*$-tensor category. 

\begin{Prop}\label{mainbraiding}
    The monoidal functors $F,G$ preserve the braiding. Thus, $\Rep(D(\G))\cong \mathcal{Z}(\Rep(\G))$ as braided $W^*$-tensor categories.
\end{Prop}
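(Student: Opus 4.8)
The plan is to use that, by Theorem \ref{main3}, $F$ and $G$ are mutually inverse monoidal functors, so it suffices to verify that a single one of them intertwines the two braidings. Both braidings are, by construction, of the form (a unitary on $\mcK\otimes\mcH$) composed with the flip $\Sigma_{\mcH,\mcK}$: the braiding of $\mathcal{Z}(\Rep(\G))$ between $F(\mcH)$ and $F(\mcK)$ is the half-braiding $C_{\mcH,\mcK}=\hat{\Uu}^\mcK_\mcH\circ\Sigma_{\mcH,\mcK}$, while the braiding of $\Rep(D(\G))$ is $(\phi_\mcK\otimes\phi_\mcH)(\hat{R}^u)\circ\Sigma_{\mcH,\mcK}$ as in \eqref{braidingspecific}. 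Hence the whole statement reduces to the single operator identity
$$\hat{\Uu}^\mcK_\mcH=(\phi_\mcK\otimes\phi_\mcH)(\hat{R}^u)\in B(\mcK\otimes\mcH),\quad \mcH,\mcK\in\Rep(D(\G)),$$
where on the left $\phi_\mcK\colon C_0^u(\hat{\G})\to B(\mcK)$ encodes the underlying $\G$-representation $U_\mcK$, and on the right $\phi_\mcK,\phi_\mcH\colon C_0^u(\widehat{D(\G)})\to B(\cdot)$ encode the $D(\G)$-representations.

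Next I would rewrite the left-hand side purely in terms of the universal bicharacter $\hat{\WW}\in M(C_0^u(\hat{\G})\otimes C_0^u(\G))$ of $\hat{\G}$ (the object obtained by running the construction of $\WW$ with $\G$ replaced by $\hat{\G}$). By definition of the half-lift one has $\hat{\Uu}_\mcH=(\id\otimes\phi_{\hat{U}_\mcH})(\hat{\WW})$, where $\phi_{\hat{U}_\mcH}\colon C_0^u(\G)\to B(\mcH)$ is the $*$-representation corresponding to the $\hat{\G}$-representation $\hat{U}_\mcH$; applying $\phi_{U_\mcK}\otimes\id$ to the first leg gives
$$\hat{\Uu}^\mcK_\mcH=(\phi_{U_\mcK}\otimes\phi_{\hat{U}_\mcH})(\hat{\WW}).$$
The essential input of the Drinfeld-double picture is that $\G$ and $\hat{\G}$ are closed quantum subgroups of $D(\G)$, yielding the factorizations $\phi_{U_\mcK}=\phi_\mcK\circ\gamma^u_{\G\subseteq D(\G)}$ and $\phi_{\hat{U}_\mcH}=\phi_\mcH\circ\gamma^u_{\hat{\G}\subseteq D(\G)}$ recorded in the Drinfeld-double subsection. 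Substituting these, I would obtain
$$\hat{\Uu}^\mcK_\mcH=(\phi_\mcK\otimes\phi_\mcH)\bigl((\gamma^u_{\G\subseteq D(\G)}\otimes\gamma^u_{\hat{\G}\subseteq D(\G)})(\hat{\WW})\bigr),$$
so that the desired identity collapses to the single assertion $(\gamma^u_{\G\subseteq D(\G)}\otimes\gamma^u_{\hat{\G}\subseteq D(\G)})(\hat{\WW})=\hat{R}^u$ in $M(C_0^u(\widehat{D(\G)})\otimes C_0^u(\widehat{D(\G)}))$.

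To prove this last equality I would invoke the uniqueness of the universal lift from \cite{MW12}*{Proposition 4.14}: since $\hat{R}^u$ is the unique universal bicharacter with $(\pi_{\widehat{D(\G)}}\otimes\pi_{\widehat{D(\G)}})(\hat{R}^u)=\hat{R}$, it is enough to check that $(\gamma^u_{\G\subseteq D(\G)}\otimes\gamma^u_{\hat{\G}\subseteq D(\G)})(\hat{\WW})$ is a bicharacter with the same image under $\pi_{\widehat{D(\G)}}\otimes\pi_{\widehat{D(\G)}}$. Using the compatibilities $\pi_{\widehat{D(\G)}}\circ\gamma^u_{\G\subseteq D(\G)}=\gamma_{\G\subseteq D(\G)}\circ\pi_{\hat{\G}}$ and $\pi_{\widehat{D(\G)}}\circ\gamma^u_{\hat{\G}\subseteq D(\G)}=\gamma_{\hat{\G}\subseteq D(\G)}\circ\pi_{\G}$ together with the fundamental relation $(\pi_{\hat{\G}}\otimes\pi_{\G})(\hat{\WW})=\hat{W}$ (the $\hat{\G}$-analogue of $(\pi_\G\otimes\pi_{\hat{\G}})(\WW)=W$), this image computes to $(\gamma_{\G\subseteq D(\G)}\otimes\gamma_{\hat{\G}\subseteq D(\G)})(\hat{W})=\hat{R}$, as required; the bicharacter property is inherited from $\hat{\WW}$ since the $\gamma^u$ are morphisms of $C^*$-bialgebras. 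I expect the main obstacle to be purely bookkeeping: pinning down the exact form of the half-lift identity $\hat{\Uu}_\mcH=(\id\otimes\phi_{\hat{U}_\mcH})(\hat{\WW})$ with the correct leg ordering and without a stray flip or adjoint, given the switch to left $\G$-representations in this section, and confirming the precise normalization of $\hat{\WW}$ relative to the conventions fixed for $\WW$, $\wW$ and $\Ww$ in the preliminaries.
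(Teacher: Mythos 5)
Your proposal is correct and takes essentially the same route as the paper: the key identity $\hat{R}^u=(\gamma^u_{\G\subseteq D(\G)}\otimes\gamma^u_{\hat{\G}\subseteq D(\G)})(\widehat{\WW})$, proved via the uniqueness of the universal lift from \cite{MW12}*{Proposition 4.14} by computing the image under $\pi_{\widehat{D(\G)}}\otimes\pi_{\widehat{D(\G)}}$, together with the evaluation $(\phi_\mcK\otimes\phi_\mcH)(\hat{R}^u)=(\phi_{U_\mcK}\otimes\phi_{\hat{U}_\mcH})(\widehat{\WW})=(\phi_{U_\mcK}\otimes\id)(\hat{\Uu}_\mcH)=\hat{\Uu}^\mcK_\mcH$, is exactly the paper's computation, merely run in the reverse direction. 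Your bookkeeping worries resolve favorably: in this section's left-representation conventions the half-lift identity $\hat{\Uu}_\mcH=(\id\otimes\phi_{\hat{U}_\mcH})(\widehat{\WW})$ holds with no extra flip, and your explicit remark that the bicharacter property is inherited through the $\gamma^u$'s (needed to invoke uniqueness) is a fine point the paper leaves implicit.
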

\begin{proof} The universal lift $\hat{R}^u$  is explicitly given by
$$\hat{R}^u=(\gamma_{\G\subseteq D(\G)}^u\otimes \gamma_{\hat{\G}\subseteq D(\G)}^u)(\widehat{\WW})\in M(C_0^u(\widehat{D(\G)})\otimes C_0^u(\widehat{D(\G)})),$$
because we have
\begin{align*}
    (\pi_{\widehat{D(\G)}}\otimes \pi_{\widehat{D(\G)}})(\hat{R}^u)= (\gamma_{\G\subseteq D(\G)}\otimes \gamma_{\hat{\G}\subseteq D(\G)} )(\pi_{\hat{\G}}\otimes \pi_{\G})(\widehat{\WW}) = \hat{R}.
\end{align*}
Consequently, given $(\mcH, U_\mcH, \hat{U}_\mcH), (\mcK, U_\mcK, \hat{U}_\mcK)\in \Rep(D(\G))$, we can calculate
\begin{align*}
    (\phi_\mcK \otimes \phi_\mcH)(\hat{R}^u)&= (\phi_\mcK\otimes \phi_\mcH)(\gamma_{\G\subseteq D(\G)}^u\otimes \gamma_{\hat{\G}\subseteq D(\G)}^u)(\widehat{\WW})\\
    &= (\phi_{U_\mcK}\otimes \phi_{\hat{U}_\mcH}) (\widehat{\WW})\\
    &= (\phi_{U_\mcK}\otimes \id)(\id \otimes \phi_{\hat{U}_\mcH})(\widehat{\WW})\\
    &= (\phi_{U_\mcK}\otimes \id)(\hat{\Uu}_\mcH).
\end{align*}
Thus, the braiding \eqref{braidingspecific} on $\Rep(D(\G))$ is given by the unitaries
    $$(\phi_{U_\mcK}\otimes \id)(\hat{\Uu}_\mcH)\circ \Sigma_{\mcH, \mcK}: \mcH \otop \mcK\to \mcK \otop \mcH, \quad \mcH, \mcK \in \Rep(D(\G)).$$
    As such, it is clear that $F$ and $G$ preserve the braiding.
\end{proof}

\textbf{Acknowledgments:} The research of the author happened at the Department of Mathematics and Data Science of the Vrije Universiteit Brussel, supported by Fonds voor Wetenschappelijk Onderzoek (Flanders), via an FWO Aspirant fellowship, grant 1162524N. The author would like to thank K. De Commer and J. Krajczok for useful discussions during this project.

\end{document}